\newcommand{\sign}[1]{\mathrm{sign}\left(#1\right)}
\let\svthefootnote\thefootnote
\newcommand\freefootnote[1]{%
  \let\thefootnote\relax%
  \footnotetext{#1}%
  \let\thefootnote\svthefootnote%
}
\newcommand{\R}{\mathbb R}
\newcommand{\p}{\mathfrak{p}}
\newcommand{\g}{\mathfrak{g}}
\newcommand{\kk}{\mathfrak{k}}
\newtheorem{thm}{Theorem}[section]
\newtheorem{lem}[thm]{Lemma}
\newtheorem{prop}[thm]{Proposition}
\newtheorem{que}[thm]{Question}
\newtheorem{claim}[thm]{Claim}
\newtheorem*{thma}{Theorem A}
\newtheorem*{thmb}{Theorem B}
\newtheorem*{thmc}{Theorem C}
\newtheorem*{thmcp}{Theorem C'}
\newtheorem*{corb}{Corollary B}
\DeclareMathOperator{\C}{\mathbb{C}}
\theoremstyle{definition}
\newtheorem{defn}[thm]{Definition}
\newtheorem{remark}[thm]{Remark}
\begin{document}
\title{On Hitchin's equations for cyclic $G$-Higgs bundles}
\author{Nathaniel Sagman}
\author{Ognjen To{\v s}i{\'c}}

\begin{abstract}
   We develop a Lie-theoretic perspective on Hitchin's equations for cyclic $G$-Higgs bundles, which we use to study analytic and geometric properties of harmonic maps. Among other things, we prove Dai-Li's conjecture on the monotonicity of the energy density in the case of Coxeter cyclic $G$-Higgs bundles, for all $G$, and Dai-Li's negative curvature conjecture for Coxeter cyclic $G$-Higgs bundles, for all $G$ except those of type $\mathrm{E}_7$ and $\mathrm{E}_8.$
\end{abstract}
\maketitle
\section{Introduction}
Let $S$ be a Riemann surface with canonical bundle $\mathcal{K}$ and let $G$ be a simple complex Lie group with Lie algebra $\g$. A $G$-Higgs bundle on $S$ is a pair $(P,\phi)$, where $P$ is a holomorphic principal $G$-bundle and $\phi$ is a holomorphic $(1,0)$-form valued in the adjoint bundle $\textrm{ad}P$ called the Higgs field. Hitchin's self-duality equations ask for a reduction of structure group of $P$ to a maximal compact subgroup $K\subset G$ that gives rise to a Chern connection $\nabla_h$ on $P$ and adjoint operator $*_h$ on $\textrm{ad}P$ satisfying $$F(\nabla_h) + [\phi,\phi^{*_h}]=0.$$
A solution is equivalent to an equivariant harmonic map from the universal cover $\tilde{S}$ to the Riemannian symmetric space $G/K$ of $G$. When $S$ is closed and of genus $g\geq 2$, for a generic class of $G$-Higgs bundles, namely the stable $G$-Higgs bundles, one can always find a unique solution. The self-duality equations and existence theory for harmonic maps together yield the non-abelian Hodge correspondence, a bridge between the moduli spaces of representations $\pi_1(S)\to G$ and $G$-Higgs bundles. 

\begin{defn}
   A $G$-Higgs bundle $(P,\phi)$ is cyclic if there exists a holomorphic gauge transformation $s$ of $P$ of order $m$ such that $s^*\phi=e^{\frac{2\pi i}{m}}\phi.$
\end{defn}
There are different notions of cyclic $G$-Higgs bundles in the literature; see \S \ref{sec: cyclic definition}.

 For $m>2,$ harmonic maps arising from cyclic $G$-Higgs bundles are weakly conformal, and hence define branched minimal immersions.  Moreover, cyclic Higgs bundles often give rise to other distinguished immersions (hyperbolic affine spherical immersions into $\R^3$, $J$-holomorphic maps into $\mathbb{H}^{2,4},$ etc.) and they play an important role in higher Teichm{\"u}ller theory. For example, Labourie's proof of his own conjecture in rank $2$ uses cyclic Higgs bundles in an essential way \cite{L2}. For studies on cyclic Higgs bundles, see \cite{Baraglia2010CyclicHB}, \cite{B}, \cite{C}, \cite{Dai2018},  \cite{GP}, \cite{GPRi}, \cite{GPG}, \cite{L2}, \cite{Katz}, among other works. 
 
In this paper, we develop a framework to study Hitchin's equations for cyclic $G$-Higgs bundles, for all $G$, in a unified way. We then use what we've found to study analytic and geometric properties of harmonic maps.

\subsection{Affine Toda equations}
 Our starting point is as follows. The most well-studied cyclic $G$-Higgs bundles are those for $G=\textrm{SL}(n,\C)$ with order $m$ equal to $n$. In this case, Hitchin's equations are equivalent to an elliptic system for $n$ sections of line bundles, often called affine Toda equations, and have been studied extensively. See, for instance, \cite{CL}, \cite{DL}, \cite{Dai2018}, \cite{LM1}. Among this class, particular focus has been given to the Higgs bundles in the Hitchin section (see \S \ref{sec: hitchin section}), for which the affine Toda equations recover well-known equations from geometry, such as the Bochner equation for harmonic maps to $\mathbb{H}^2$ (for $n=2$) or the Tzit{\'e}ica equation for hyperbolic affine spheres in $\R^3$ (for $n=3$). 

Our first result (Theorem A below) concerns groups of adjoint type; it has a version for more general groups, but it is more cleanly stated with the adjoint assumption. Associated with a Cartan subalgebra $\mathfrak{h}$ of $\g$, we have the root system $\Delta$ with Coxeter number $r$ (see Definition \ref{def: coxeter}). For $\g=\mathfrak{s}\mathfrak{l}(n,\C)$, $r=n.$ When the order $m$ is $r,$ we say that $(P,\phi)$ is Coxeter cyclic. Given a choice of simple roots $\Pi$ for $\mathfrak{h}$, adjoining the lowest root $-\delta$ gives the extended system of simple roots $\mathcal{Z}=\{-\delta\}\cup \Pi$. For $m=r,$ we find that, exactly in this case and for no other $m$, Hitchin's equations split into a system of equations for a set of Hermitian metrics on line bundles, indexed by $\mathcal{Z}$. More precisely, for $m=r$, we first prove the following, and then we prove a result on the structure of Hitchin's equations for Coxeter cyclic $G$-Higgs bundles, Theorem A below.
\begin{prop}\label{prop: structure}
        Let $G$ be a simple complex Lie group of adjoint type. The data of a Coxeter cyclic $G$-Higgs bundle $(P,\phi)$ over a Riemann surface $S$ is equivalent to a collection of line bundles $\{L_\alpha:\alpha\in\mathcal{Z}\}$, holomorphic sections $\phi_\alpha\in H^0(S,L_\alpha\otimes \mathcal{K})$, and an isomorphism $\Theta$ from $\bigotimes_{\alpha\in\mathcal{Z}} L_\alpha^{\otimes n_\alpha}$ to the trivial line bundle $\mathcal{O}$. 
\end{prop}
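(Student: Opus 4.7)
The plan is to use the Kostant principal element $\sigma = \exp(2\pi i \rho^\vee/h) \in G$, where $h = r$ is the Coxeter number and $\rho^\vee$ is half the sum of positive coroots, as the model for the order-$r$ gauge transformation $s$. The Lie-theoretic starting point is that $\mathrm{Ad}(\sigma)$ acts on the root space $\g_\alpha$ by $e^{2\pi i \, \mathrm{ht}(\alpha)/h}$, so its $e^{2\pi i/h}$-eigenspace is spanned by $\{e_\alpha : \alpha \in \mathcal{Z}\}$ (the roots of height $1$ and $1-h$), and its centralizer in $G$ is the maximal torus $H$, since $\rho^\vee$ is regular and $G$ is of adjoint type.

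For the forward direction, given $(P,\phi)$ with gauge transformation $s$ of order $r$ satisfying $s^*\phi = e^{2\pi i/h}\phi$, I would first argue that $s$ is fiberwise conjugate to $\sigma$, yielding a holomorphic reduction of $P$ to an $H$-bundle $P_H$. From $P_H$ one obtains the root-space decomposition
\[
\mathrm{ad}\,P \;=\; (\h \otimes \mathcal{O}) \;\oplus\; \bigoplus_{\alpha \in \Delta} L_\alpha, \qquad L_\alpha := P_H \times_H \mathbb{C}_\alpha.
\]
Since $\mathrm{Ad}(s)$ acts on $L_\alpha$ by $e^{2\pi i\, \mathrm{ht}(\alpha)/h}$, the eigenvalue condition on $\phi$ forces $\phi = \sum_{\alpha \in \mathcal{Z}} \phi_\alpha$ with $\phi_\alpha \in H^0(S, L_\alpha \otimes \mathcal{K})$. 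Then the relation $-\delta + \sum_i n_i \alpha_i = 0$ in the root lattice, combined with $L_{\lambda+\mu} \cong L_\lambda \otimes L_\mu$, yields the isomorphism $\Theta$.

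For the reverse direction, the role of $\Theta$ is to record the unique linear relation among the elements of $\mathcal{Z}$ in the root lattice $Q$. Since $G$ is adjoint, $X^*(H) = Q$, so specifying $\{L_\alpha\}_{\alpha \in \mathcal{Z}}$ together with $\Theta$ determines a family of line bundles $\{L_\lambda\}_{\lambda \in Q}$ satisfying $L_{\lambda + \mu} \cong L_\lambda \otimes L_\mu$, that is, an $H$-bundle $P_H$. Setting $P := P_H \times_H G$, taking $\phi := \sum_\alpha \phi_\alpha \otimes e_\alpha$ as a section of $\mathrm{ad}\,P \otimes \mathcal{K}$, and letting $s$ be the image of $\sigma$ under $H \hookrightarrow \mathrm{Aut}(P)$ then reconstructs the Coxeter cyclic Higgs bundle.

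The main technical obstacle is the first step of the forward direction: identifying $s$ as fiberwise conjugate to the principal element $\sigma$. A priori, other order-$r$ conjugacy classes in adjoint $G$ could support a Higgs field lying in their $e^{2\pi i/h}$-eigenspace, so one needs additional input — a spectral analysis of $\mathrm{Ad}(s)$ on $\mathrm{ad}\,P$, or appeal to the Kac classification of finite-order automorphisms of $\g$ via the distinguished Kac coordinates $(1,1,\ldots,1)$ on the extended Dynkin diagram — to force this identification. This Coxeter-specific rigidity is what makes the clean decomposition in Proposition \ref{prop: structure} possible, and it is precisely what fails for $m \neq r$.
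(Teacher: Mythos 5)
Your proposal is correct and follows essentially the same route as the paper: the forward direction rests on the Kac--Vinberg classification of finite-order inner automorphisms, where the constraint $\sum_{\alpha\in\mathcal{Z}} n_\alpha p_\alpha = r = \sum_{\alpha\in\mathcal{Z}} n_\alpha$ with $p_\alpha\in\mathbb{Z}_{\geq 0}$ forces the coordinates $(1,\dots,1)$ and hence conjugates $s$ fiberwise into the principal element with $\g_0=\mathfrak h$ and $\g_1=\oplus_{\alpha\in\mathcal{Z}}\g_\alpha$ --- precisely the resolution you flag as the ``main technical obstacle,'' which the paper carries out via its Theorem 3.4 and Proposition 3.6. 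The reverse direction in the paper is your construction made concrete: instead of invoking the equivalence between $H$-bundles and multiplicative families of line bundles over $X^*(H)=Q$, it builds $P_H$ explicitly inside $\prod_{\alpha\in\mathcal{Z}}\mathrm{Isom}(\g_\alpha,L_\alpha)$ cut out by the condition $\Theta\bigl(\prod_\alpha A_\alpha(\tilde e_\alpha)^{n_\alpha}\bigr)=1$, and takes $s$ to be conjugation by the Coxeter element, exactly as you propose.
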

The decomposition of $\phi$ can be cast as a special case of \cite[Theorem 4.2.2]{C}, where the extra hypothesis gives more structure. About (poly)stability of $(P,\phi)$ in terms of the $\phi_\alpha$'s, see Remark \ref{rem: no phi_i zero}. Throughout the paper, we use $\nu$ for the Killing form on $\mathfrak{g}$ as well as its dualization on $\mathfrak{g}^*$. 
    \begin{thma}\label{thm: firstthm}
    
        \par  Let $G$ be a simple complex Lie group of adjoint type. A solution to Hitchin's equations for a stable Coxeter cyclic $G$-Higgs bundle over a closed Riemann surface $S$ with data $L_\alpha,\phi_\alpha$ as above is equivalent to a collection of Hermitian metrics $\mu_\alpha$ on $L_\alpha$ such that $\Theta_*\prod_{\alpha\in\mathcal{Z}}\mu_\alpha^{n_\alpha}$ is the flat metric on $\mathcal{O}$, and such that 
        \begin{align}\label{eq: firstthm}
            F(\mu_\alpha)+\sum_{\beta\in\mathcal{Z}} {\nu(\alpha,\beta)} \mu_\beta(\phi_\beta\wedge\bar{\phi}_\beta)=0,
        \end{align} 
        where $F(\mu_\alpha)$ denotes the curvature of the Chern connection associated with the Hermitian metric $\mu_\alpha$, which appears in the second term of (\ref{eq: firstthm}) as a map $L_\alpha\otimes\bar{L}_\alpha\to\mathbb{C}$. 
    \end{thma}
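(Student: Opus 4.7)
The plan is to use the $T_{\mathbb{R}}$-reduction of $P$ implicit in the Coxeter cyclic data to rewrite Hitchin's equation as a decoupled system on the weight line bundles, exploiting the combinatorial fact that the extended simple roots $\mathcal{Z}$ are ``sparse'' in the root system.

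First I would observe that Proposition \ref{prop: structure} exhibits $P$ as an extension to $G$ of a holomorphic $T$-bundle (the $L_\alpha$'s together with the relation $\Theta$ encode $\sum_{\alpha \in \mathcal{Z}} n_\alpha \alpha = 0$ in $\mathfrak{h}^*$). A collection of Hermitian metrics $\mu_\alpha$ on the $L_\alpha$'s with $\Theta_*\prod \mu_\alpha^{n_\alpha}$ equal to the flat metric is precisely a $T_{\mathbb{R}}$-reduction of this $T$-bundle, which via $T_{\mathbb{R}} \hookrightarrow K$ induces a Hermitian metric $h$ on $P$. Writing $\phi = \sum_{\alpha \in \mathcal{Z}} \phi_\alpha E_\alpha$ in a Chevalley basis normalized so that $\nu(E_\alpha, E_{-\alpha}) = 1$, the $T_{\mathbb{R}}$-reduction makes the weight lines $\mathbb{C} E_\alpha$ mutually orthogonal in $\mathrm{ad}\,P$ and yields $E_\alpha^{*_h} = \mu_\alpha E_{-\alpha}$ after a direct calculation with the induced metric.

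Next I would compute both sides of Hitchin's equation for this $h$. The Chern connection $\nabla_h$ is Cartan-valued, so $F(\nabla_h)$ is an $\mathfrak{h}$-valued $(1,1)$-form; the bracket $[\phi,\phi^{*_h}]$ is likewise $\mathfrak{h}$-valued because $\phi$ and $\phi^{*_h}$ sit in opposite degrees of the Coxeter $\mathbb{Z}/r$-grading. The key algebraic input is that for distinct $\alpha,\beta \in \mathcal{Z}$, the difference $\alpha - \beta$ is not a root: for two distinct simple roots this is standard, and if $\alpha = -\delta$ and $\beta \in \Pi$ then $-(\alpha-\beta) = \delta+\beta$ would contradict the maximality of the highest root. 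Hence $[E_\alpha, E_{-\beta}] = 0$ for $\alpha \neq \beta$, and
\[
[\phi,\phi^{*_h}] \;=\; \sum_{\alpha \in \mathcal{Z}} \mu_\alpha(\phi_\alpha \wedge \bar{\phi}_\alpha)\, t_\alpha,
\]
where $t_\alpha := [E_\alpha, E_{-\alpha}] \in \mathfrak{h}$ is Killing-dual to $\alpha$. Pairing Hitchin's equation with $\beta \in \mathfrak{h}^*$, using $\beta(F(\nabla_h)) = F(\mu_\beta)$ (by compatibility of the Chern connection with the character $\beta: T \to \mathbb{C}^{\times}$ that defines $L_\beta$) together with $\beta(t_\alpha) = \nu(\alpha,\beta)$, produces exactly equation \eqref{eq: firstthm}.

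Finally I would argue the equivalence in both directions. Given $\mu_\alpha$'s satisfying \eqref{eq: firstthm}, the $h$ built above solves Hitchin's equation: both sides are $\mathfrak{h}$-valued, so it suffices to check the equation pairs correctly with each $\beta \in \mathcal{Z}$, which is \eqref{eq: firstthm}; the single linear relation $\sum n_\alpha \alpha = 0$ among these $\ell+1$ pairings is consistent with $\Theta_*\prod \mu_\alpha^{n_\alpha}$ being flat (summing \eqref{eq: firstthm} with weights $n_\beta$ produces $F(\prod\mu_\alpha^{n_\alpha}) = 0$ on the curvature side and $0$ on the commutator side). Conversely, the unique Hitchin solution afforded by stability is $s$-invariant by the standard uniqueness argument (since $s$ rescales $\phi$ by a unit-modulus phase that does not affect $[\phi,\phi^{*_h}]$), and because the Coxeter element is regular in $G$ with centralizer $T$, this $s$-invariance forces $h$ to reduce to $T_{\mathbb{R}}$. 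I expect the main obstacle to be precisely this last step -- carefully showing that $s$-invariance of $h$ implies the full $T_{\mathbb{R}}$-reduction (not merely invariance under the finite abelian subgroup generated by $s_0$) -- together with tracking Chevalley normalizations to recover the exact coefficient $\nu(\alpha,\beta)$ in \eqref{eq: firstthm}.
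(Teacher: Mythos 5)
Your proposal is correct and follows essentially the same route as the paper: the commutator vanishing $[E_\alpha,E_{-\beta}]=0$ for distinct $\alpha,\beta\in\mathcal{Z}$ (the paper's Lemma \ref{lem: commutators}), the $s$-invariance of the harmonic reduction via uniqueness and the $S^1$-invariance of the equations (Proposition \ref{nablapreserves}), the torus reduction forced by the regularity of the Coxeter element (Lemma \ref{claim:conn-induced}, where the centralizer in $G$ is the Cartan subgroup $H$ and intersecting with $K$ gives $T$), and the evaluation of the $\mathfrak{h}$-valued equation against the extended simple roots. The two steps you flag as delicate are precisely the ones the paper isolates and proves.
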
 
    Taking $G^{\R}$ to be a split real form of $G,$ $(P,\phi)$ might carry the extra structure of a $G^{\R}$-Higgs bundle (see \S \ref{sec: introducing harmonic maps} and \S \ref{sec: G^R higgs} for definitions and discussion). In this case, Proposition \ref{prop: structure} and Theorem A can be refined. See \S \ref{sec: G^R higgs}. As well, we point out in Remark \ref{rem: not closed} that, after adding a hypothesis, the assumptions on $S$ being closed and $\phi$ being stable can be removed.
    
For $G$-Higgs bundles in the Hitchin section, the local coordinate expression of (\ref{eq: firstthm}) is similar to the expression for Hitchin's equations from Baraglia's work \cite{Baraglia2010CyclicHB} (which he calls affine Toda equations). To prove Theorem A in our general setting, we draw on \cite{Baraglia2010CyclicHB}, \cite{C}, and \cite{GP}.  One conclusion of Theorem A is that Hitchin's equations can be understood through the Cartan matrix or the extended Dynkin diagram of $\mathcal{Z}$, which carry the information of the $\nu(\alpha,\beta)$'s.

For $G=\textrm{SL}(n,\C)$, the system (\ref{eq: firstthm}) appears frequently in the works of Dai and Li and their collaborators (for example, \cite{CL}, \cite{DL}, \cite{Dai2018}, \cite{LM1}), although it usually looks different because the authors work with associated vector bundles and not $G$-bundles. With this in mind, we use Theorem A to prove results on harmonic maps that fall in line with those of \cite{DL} and \cite{Dai2018}, but for general Lie groups.

\begin{remark}
   For $G=\textrm{SL}(n,\C),$ a Coxeter cyclic $G$-Higgs bundle is equivalent to a certain twisted quiver bundle, as considered in \cite{CLGP}. As explained in \cite[Remark 4.1.3]{C}, on a closed surface and under the appropriate stability conditions, the solution to Hitchin's equations is equivalent to the solution to the relevant twisted quiver $(\sigma,\tau)$-vortex equations found in \cite{CLGP}. For general $G$, it would be interesting to compare our equations and solutions with equations and solutions from \cite{CLGP}. 
\end{remark}
\begin{remark}
    As we've mentioned, cyclic Higgs bundles correspond to other special immersions of surfaces in homogeneous spaces of $G$ (see also Remark \ref{rem: harmonic lift}). It appears that the equations governing those immersions are always expressed in terms of the $\mu_\alpha(\phi_\alpha\wedge\overline{\phi}_\alpha)$'s. For example, in \cite{Nie}, Nie studies alternating surfaces in pseudo-hyperbolic spaces, which naturally correspond to certain Coxeter cyclic $\textrm{SO}(2n+1,\C)$-Higgs bundles. For such a surface, the energy density is completely encoded by just one of the $2$-forms $\mu_\alpha(\phi_\alpha\wedge\overline{\phi}_\alpha)$ (see Theorems 4.6 and 4.10 in \cite{Nie}, it is the ``$h_1$-term"). Thus, the methods from this paper should be applicable in studying these other types of immersions.
\end{remark}
\begin{remark}
    After writing the first version of this paper, we learned of McIntosh's paper \cite{Mcintosh}. In \cite{Mcintosh}, among other things, McIntosh studies so-called $\tau$-primitive harmonic maps to $G/T$, where $T$ is a compact torus in $G$ fixed pointwise by a Coxeter automorphism $\tau$ of $G$, and shows that they are equivalent to solutions to what he calls the geometric Toda equations. The harmonic maps from Theorem A lift to $\tau$-primitive harmonic maps, where $\tau$ is an inner Coxeter automorphism (see Remark \ref{rem: harmonic lift}), and our equation (\ref{eq: firstthm}) is more or less equivalent to the geometric Toda equations. 

McIntosh explains how a $\tau$-primitive harmonic map gives rise to a $G$-Higgs bundle, but the structure of the $G$-Higgs bundle is not his main focus. Our work and \cite{Mcintosh} complement each other quite nicely--we work with different but related objects (for McIntosh, maps to $G/T$, and for us, solutions to Hitchin's equations), and it would be interesting to compare features of the two objects that come into play here and in \cite{Mcintosh} respectively. There is one part of the computation in \S \ref{sec: equation} that resembles a step in the derivation of the equations in \cite[\S 4]{Mcintosh}, but we make no changes. We also point out that the proof of Theorem A contains more value than just the statement: many aspects apply to general (not Coxeter) cyclic $G$-Higgs bundles (which do not lift to $\tau$-primitive harmonic maps), and it is mostly at the very end that specializing to $m=r$ is necessary.
\end{remark}

\subsection{Energy density}\label{subsec: energy density intro}
We use $\nu$ for the Killing metric on $G/K$, and fix a conformal metric $\mu$ on $S$, whose lift to $\tilde{S}$ we continue to denote by $\mu$. Let $(P,\phi)$ be a stable Coxeter cyclic $G$-Higgs bundle. For $t\in \C^*,$ each $G$-Higgs bundle $(P,t\phi)$ is stable and Coxeter cyclic. We say that $(P,\phi)$ is fixed by the $\C^*$-action if $\phi$ is gauge equivalent to $t\phi$ for all $t\in \C^*$.

The energy density of a map $f:\tilde{S}\to G/K$ is a norm for the derivative $df$, defined using $\mu$ and $\nu$ and denoted $e(f).$ Formally, $e(f)=\textrm{tr}_\mu f^*\nu.$ It descends to a function on $S,$ which vanishes precisely on the zero set of the Higgs field of $f$. Using Proposition \ref{prop: structure} and Theorem A, $e(f)$ splits as a sum of functions that satisfy an elliptic system depending on $G$, and which we name the Bochner-Toda equations (see Proposition \ref{prop: energy} and Theorem \ref{thm: BT eqns}). Using this system, we prove the following.
\begin{thmb}\label{thm: energy monotonicity}
Let $G$ be a simple complex Lie group. Let $(P,\phi)$ be a stable and simple Coxeter cyclic $G$-Higgs bundle on a closed Riemann surface $S$, which is not a fixed point of the $\C^*$-action. For each $t\in \C^*$, let $f_t:\tilde{S}\to G/K$ be the equivariant harmonic map associated with $(P,t\phi)$. Then, away from the zero set of $\phi$, the $\C^*$-family of functions $$e(f_{t}): S\to [0,\infty), \hspace{2mm} t\in \C^*,$$ is strictly increasing with $|t|$.
\end{thmb}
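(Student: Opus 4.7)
The plan is to use Theorem A and the Bochner--Toda formulation of Hitchin's equations to produce, upon differentiating in the $\C^*$-parameter, a linear elliptic system for the logarithmic $s$-derivatives of the energy-density components $w_\alpha(t) := |t|^2\mu_\alpha^t(\phi_\alpha\wedge\bar\phi_\alpha)$, and then to run a strong maximum principle governed by the sign structure of the symmetrized extended Cartan matrix.

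\textbf{The system.} Theorem A gives $F(\mu_\alpha^t) + \sum_\beta \nu(\alpha,\beta)\, w_\beta(t) = 0$ together with $\Theta_*\prod\mu_\alpha^{n_\alpha}\equiv\mathrm{const}$. Taking $\partial\bar\partial\log$ of each Hitchin equation yields, off $\{\phi_\alpha = 0\}$, the Bochner--Toda equation
\[
\partial\bar\partial\log w_\alpha(t) = \sum_{\beta\in\mathcal{Z}} \nu(\alpha,\beta)\, w_\beta(t).
\]
Since $e(f_t) = \sum_\alpha c_\alpha w_\alpha(t)$ with $c_\alpha > 0$, and since $e(f_t)$ depends on $t$ only through $|t|$ by the $U(1)$-invariance of Hitchin's equations, the theorem reduces to showing $\partial_s w_\alpha(t) > 0$ off $\{\phi_\alpha = 0\}$, where $s := \log|t|$. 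Setting $v_\alpha := \partial_s\log w_\alpha$ and differentiating the Bochner--Toda equation in $s$ produces the linear elliptic system
\[
\partial\bar\partial v_\alpha = \sum_{\beta\in\mathcal{Z}} \nu(\alpha,\beta)\, v_\beta\, w_\beta(t), \qquad \alpha\in\mathcal{Z},
\]
supplemented by the constraint $\sum_\alpha n_\alpha(v_\alpha - 2) \equiv 0$ inherited from the normalization. The goal becomes $v_\alpha > 0$ everywhere on $S$.

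\textbf{Maximum principle.} The matrix $A := (\nu(\alpha,\beta))_{\alpha,\beta\in\mathcal{Z}}$ is a positive multiple of the symmetrized extended Cartan matrix of $\g$: positive diagonal, non-positive off-diagonal, positive semi-definite with kernel spanned by $(n_\alpha)_{\alpha\in\mathcal{Z}}$, and connected extended Dynkin diagram. I would argue by contradiction. Assuming $m := \min_{\alpha,S} v_\alpha \le 0$ attained at $(\alpha_0, p_0)$, the inequality $\partial\bar\partial v_{\alpha_0}(p_0) \ge 0$, combined with the signs of $\nu(\alpha_0,\cdot)$, non-negativity of $w_\beta$, and $v_\beta \ge m$, should, together with a strong-maximum-principle propagation along the connected extended Dynkin diagram, force $v_\alpha \equiv m$ for every $\alpha$ on all of $S$. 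The normalization then gives $(m - 2)\sum_\alpha n_\alpha = 0$, i.e.\ $m = 2 > 0$, contradicting $m \le 0$. Therefore $v_\alpha > 0$ and the monotonicity follows.

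\textbf{Main obstacle.} The hard part is executing the maximum-principle propagation rigorously. The matrix $A$ is only positive \emph{semi-}definite (its one-dimensional kernel corresponds to the normalization mode), the $w_\alpha$'s vanish on the zero set of $\phi_\alpha$, and the quantity $\sum_\beta \nu(\alpha,\beta) w_\beta = -F(\mu_\alpha^t)$ that naturally shows up at the minimum has indefinite sign, so the off-the-shelf cooperative-system max-principle does not directly apply. One must exploit the special structure of the extended Cartan matrix together with the connectedness of the extended Dynkin diagram to chain the propagation, and invoke the stability, simplicity, and non-fixed-by-$\C^*$ hypotheses to ensure that the family $t\mapsto\mu_\alpha^t$ is smooth, unique, and genuinely $t$-dependent, so that the argument is not vacuous at degenerate fixed-point configurations.
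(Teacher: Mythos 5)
Your overall strategy---pass to the Bochner--Toda form of Hitchin's equations, vary in $t$, and run a cooperative maximum principle closed up by the normalization $\sum_\alpha n_\alpha \partial_s\log w_\alpha = 2\sum_\alpha n_\alpha$---is the same as the paper's, which proves the stronger statement that each root energy $e_\alpha(t)$ is separately increasing and then sums. But there are two genuine gaps. First, the linearized system you write, $\Delta v_\alpha=\sum_\beta \nu(\alpha,\beta)w_\beta v_\beta$, is cooperative but \emph{not} column-diagonally dominant: the column sums are $w_\beta\sum_\alpha\nu(\alpha,\beta)=w_\beta\,\nu\bigl(\sum_\alpha\alpha,\beta\bigr)$, and it is $\sum_{\alpha\in\mathcal{Z}}n_\alpha\alpha$ that vanishes, not $\sum_{\alpha\in\mathcal{Z}}\alpha$. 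Outside type $\mathrm{A}$ these column sums have no sign. The paper fixes this by working with the weighted unknowns $u_\alpha=n_\alpha\log\frac{e_\alpha(t)}{e_\alpha(s)}$ and coefficients proportional to $\nu(n_\alpha\alpha,n_\beta^{-1}\beta)$, whose column sums are exactly zero; without this rescaling the hypotheses of any Dai--Li-type principle fail and your argument cannot start.

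Second, and more seriously, the contradiction step ``$m=\min v_\alpha\le 0$ at $(\alpha_0,p_0)$ forces $v_\alpha\equiv m$ for all $\alpha$'' is precisely the step that does not go through by evaluating at the minimum and propagating: writing $\Delta(v_{\alpha_0}-m)\le \nu(\alpha_0,\alpha_0)w_{\alpha_0}(v_{\alpha_0}-m)+m\sum_\beta\nu(\alpha_0,\beta)w_\beta$, the inhomogeneous term is $-m$ times the curvature $-F(\mu_{\alpha_0}^t)$, which is indefinite---exactly the obstacle you name in your last paragraph, and which you do not overcome. The paper's resolution is not a pointwise Hopf propagation but Dai--Li's maximum principle (Theorem 4.4, condition (3), generalized in the appendix), applied to the $n_\alpha$-weighted \emph{finite-difference} system with coefficients $\frac{e_\beta(t)-e_\beta(s)}{\log e_\beta(t)-\log e_\beta(s)}\ge 0$ in place of $w_\beta$ (which also sidesteps justifying differentiability of $t\mapsto\mu_\alpha^t$); its proof is a combinatorial argument over the vectors $e_A$, $A\subseteq\mathcal{Z}$, that uses the vanishing column sums in an essential way. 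Finally, the full-coupling hypothesis of that principle requires that no $\phi_\alpha$ is identically zero, which is where stability, simplicity, and the non-$\C^*$-fixed-point assumption actually enter (via Remark 3.9); you invoke those hypotheses only for regularity of the family, not for this, so as written the coupling needed to conclude $v_\alpha>0$ for \emph{every} $\alpha$ is unjustified.
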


Dai-Li conjectured the result to hold for all stable Higgs bundles \cite{DL}. In \cite[Theorem 1.1]{Dai2018}, Dai-Li proved the $G=\textrm{SL}(n,\C)$ case of Theorem B. To do so, they developed a new maximum principle for elliptic systems \cite[Lemma 3.1]{Dai2018}. For general $G,$ even though the Bochner-Toda equations depend on the combinatorics of different Dynkin diagrams, we find that Dai-Li's maximum principle is well-suited to studying the system, and we apply it in the key step of the proof of Theorem B. 

While working on this paper, we found a generalization of Dai-Li's maximum principal, which has a combinatorial flavour and gives a new perspective. For pedagogical purposes, and to make the current paper more self-contained, we state and prove our new maximum principle in the appendix. The main results there are Theorems \ref{thm:mp} and \ref{thm:dai-li-gen}. We hope they might be of use in the future. 

Theorem B can be applied to study volume entropy. Let $g$ be a Riemannian metric on $S,$ which is allowed to have isolated degenerate points, and let $g$ as well denote the lift to $\tilde{S}$. Fixing a basepoint $p_0\in \tilde{S}$, let $\pi_1(S)\cdot p_0$ denote the orbit of $p_0$ under the $\pi_(S)$ action on $\tilde{S}$ by deck transformations, and let $B_g(p_0,R)$ denote a ball of radius $R$ around $p_0$ with respect to the distance function of $g$.
\begin{defn}
    The volume entropy of $g$ is $$\textrm{Ent}(g) = \limsup_{r\to\infty}\frac{\log \#\{p\in \tilde{S}:p\in\pi_1(S)\cdot p_0\cap B_{g}(p_0,R)\}}{r}.$$
\end{defn}
We show in Proposition \ref{prop: entropy} that for any stable family $(P,t\phi),$ $t\in \C^*,$ such that the harmonic maps $f_t$ are weakly conformal, if $\phi$ is not nilpotent, then $\lim_{|t|\to\infty}\textrm{Ent}(f_t^*\nu) =0.$ Using Theorem B, we see that in the Coxeter cyclic case, this convergence to zero is monotonic.
\begin{corb}\label{corollary}
   Let $(P,t\phi)$, $t\in \C^*$, be a family of stable and simple Coxeter cyclic $G$-Higgs bundles that are not $\C^*$-fixed points, with harmonic map $f_t$.  Then, as $|t|\to\infty,$ $\textrm{Ent}(f_t^*\nu)$ decreases monotonically to zero.
\end{corb}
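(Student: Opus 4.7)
The plan is to combine the pointwise monotonicity of the energy density from Theorem~B with the weak conformality of Coxeter cyclic harmonic maps to deduce monotonicity of the pullback metrics $f_t^*\nu$, and then to invoke Proposition~\ref{prop: entropy} for the convergence of the entropy to zero.

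First, I verify the hypotheses of Proposition~\ref{prop: entropy}. Whenever the Coxeter number $r$ satisfies $r>2$ (which holds for every simple $G$ other than those of type $\mathrm{A}_1$, the latter handled separately), each harmonic map $f_t$ associated with a Coxeter cyclic $G$-Higgs bundle is weakly conformal. Moreover, stability together with Remark~\ref{rem: no phi_i zero} forces every component $\phi_\alpha$ in the decomposition of Proposition~\ref{prop: structure} to be nonzero; at a generic point of $S$ the Higgs field $\phi$ is then conjugate to a Coxeter element, which is regular semisimple, so $\phi$ is not a nilpotent section. Proposition~\ref{prop: entropy} therefore yields $\lim_{|t|\to\infty}\textrm{Ent}(f_t^*\nu)=0$.

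Next, I establish the monotonicity. Weak conformality allows us to write $f_t^*\nu=\tfrac{1}{2}e(f_t)\mu$ on $S\setminus Z$, where $Z\subset S$ is the finite zero set of $\phi$, with $f_t^*\nu=0$ on $Z$. By Theorem~B, $e(f_t)$ is strictly increasing in $|t|$ pointwise on $S\setminus Z$, so $f_t^*\nu$ is pointwise monotonically nondecreasing in $|t|$ as a positive semidefinite symmetric $2$-tensor. Consequently, for $|s|\le|t|$ the induced distance functions on $\tilde{S}$ satisfy $d_{f_s^*\nu}\le d_{f_t^*\nu}$, which gives the inclusion $B_{f_t^*\nu}(p_0,R)\subset B_{f_s^*\nu}(p_0,R)$ for every $R>0$. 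The cardinality of $\pi_1(S)\cdot p_0$ inside the former ball is therefore no larger than that inside the latter, and passing to $\limsup_{R\to\infty}$ of the logarithmic growth rate yields $\textrm{Ent}(f_t^*\nu)\le\textrm{Ent}(f_s^*\nu)$.

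Combining the two steps, $|t|\mapsto\textrm{Ent}(f_t^*\nu)$ is nonincreasing and tends to zero as $|t|\to\infty$, which is the conclusion of the corollary. The main subtlety to keep track of is the mild degeneracy of $f_t^*\nu$ at the zeros of $\phi$, but because $Z$ is a finite subset of the compact surface $S$, it does not affect the asymptotic orbit counts in large balls, so the comparison of pseudo-distance functions passes cleanly to the comparison of volume entropies.
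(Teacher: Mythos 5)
Your proof is correct and takes essentially the same route as the paper: Proposition~\ref{prop: entropy} gives the convergence to zero, and monotonicity follows from Theorem~B together with the fact that a pointwise comparison of the (conformal, mildly degenerate) pullback metrics reverses the comparison of volume entropies --- the paper simply cites its property (1) of $\textrm{Ent}$ where you re-derive it via distance comparison and ball inclusion. Your verification of the hypotheses of Proposition~\ref{prop: entropy} (weak conformality for Coxeter number $r>2$, and non-nilpotency because a generic value of $\phi$ with all $\phi_\alpha\neq 0$ is a regular semisimple cyclic element in Kostant's sense --- what you call a ``Coxeter element'') usefully fills in details the paper leaves implicit; the only loose end is the type $\mathrm{A}_1$ case you defer but never treat, where $r=2$ and weak conformality can genuinely fail, a gap shared by the paper's own one-line proof.
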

Analogous results have been proved for other objects that arise from cyclic Higgs bundles, such as hyperbolic affine spheres in $\mathbb{R}^3$ \cite{Nieconvex} and maximal surfaces in $\mathbb{H}^{2,2}$ \cite{OT}. Since the pullback metrics for these objects can be expressed using the $\mu_\alpha(\phi_\alpha\wedge \overline{\phi}_\alpha)$'s, our analysis (Lemma \ref{lm:ineq-alpha} specifically) can be used to recover these results.

Finally, the $\C^*$-action on the set of stable $G$-Higgs bundles does not extend to a $\C$-action. However, using properness of the Hitchin fibration (see below), we can, in some sense, take $t\to 0$ in Theorem B and Corollary B. As a consequence, we obtain certain domination results, which generalize \cite[Theorem 1.3]{Dai2018}. See \S \ref{subsec: t to zero} for discussion and results.

\subsection{Hitchin representations and extrinsic curvature}
The Hitchin fibration from the moduli space of $G$-Higgs bundles on $S,$ $\mathcal{M}_S(G)$, to the Hitchin base for $G,$ $\oplus_{i=1}^l H^0(S,\mathcal{K}^{m_i})$ ($m_i$ depending on $G$), is an important integrable system. For $S$ a closed Riemann surface of genus $g\geq 2$, Hitchin constructed a section, and the image of that section under the non-abelian Hodge correspondence is a connected component of the $G^{\R}$-character variety, where $G^{\R}$ is a split real form of $G$, called the Hitchin component. The $G$-Higgs bundles $(P,\phi)$ in question are $G^{\R}$-Higgs bundles (see \S \ref{sec: introducing harmonic maps} and \ref{sec: G^R higgs}), and the equivariant harmonic maps land in a copy of the $G^{\R}$-symmetric space. See \S \ref{sec: hitchin section} for the explicit Hitchin section. Hitchin components are the first examples of higher Teichm{\"u}ller spaces, and the representations that they parametrize, so-called Hitchin representations, possess many interesting geometric and dynamical properties. For an overview on higher Teichm{\"u}ller theory and Hitchin representations, see \cite{Wie}. 

See \cite{Li} for a list of conjectures about harmonic maps for Hitchin representations. In particular, in \cite{DL} and \cite{Li} it is conjectured that harmonic maps for Hitchin representations to $\textrm{SL}(n,\R)$, at immersed points, are never tangent to flats in the symmetric space. In \cite{DL}, this result is proved for the Hitchin harmonic maps associated with Coxeter cyclic $\textrm{SL}(n,\C)$-Higgs bundles. Our main result on the geometry of harmonic maps is the extension of this result for general $G.$ Given an equivariant immersion $f:\tilde{S}\to G/K,$ we write $K_{\nu}(f_* T\tilde{S})$ for the intrinsic curvature function, a priori living on $\tilde{S}$, but which descends to $S$.
\begin{thmc}\label{thm: curvature}
      Let $G$ be a split real form of a simple complex Lie group distinct from $\mathrm{E}_7, \mathrm{E}_8$, and let $(P,\phi)$ be a Coxeter cyclic $G$-Higgs bundle over a closed Riemann surface $S$ in the Hitchin section with equivariant harmonic map $f:\tilde{S}\to G/K$. Then, at every point of $S,$
    \begin{align*}
        K_{\nu}(f_*T\tilde{S})<0.
    \end{align*}
\end{thmc}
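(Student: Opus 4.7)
My plan is to establish $K_\nu(f_*T\tilde{S}) < 0$ pointwise by analyzing the formula $K_\nu = -\tfrac{1}{2}\Delta\log e(f)/e(f)$, valid for the conformal harmonic immersion $f$ (conformality follows from the Coxeter cyclicity assumption, since $r\geq 3$). By Theorem A, setting $u_\alpha$ to be the positive local function defined by $\mu_\alpha(\phi_\alpha \wedge \overline{\phi_\alpha}) = u_\alpha\,dz\wedge d\overline{z}$, one has $e(f) = \sum_{\alpha\in\mathcal{Z}} u_\alpha$ up to a fixed normalization, and
\[
\Delta\log u_\alpha \;=\; C\sum_{\beta\in\mathcal{Z}}\nu(\alpha,\beta)\,u_\beta
\]
away from the zeros of $\phi_\alpha$, for some positive constant $C$.

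Using $\Delta u_\alpha = u_\alpha\Delta\log u_\alpha + u_\alpha|\nabla\log u_\alpha|^2$ and the standard identity $\Delta\log e(f) = \Delta e(f)/e(f) - |\nabla e(f)|^2/e(f)^2$, a direct calculation yields the nonnegative decomposition
\[
\Delta\log e(f) \;=\; \frac{C\sum_{\alpha,\beta\in\mathcal{Z}}\nu(\alpha,\beta)\,u_\alpha u_\beta}{e(f)} \;+\; \frac{\tfrac12\sum_{\alpha,\beta\in\mathcal{Z}}u_\alpha u_\beta\,|\nabla\log u_\alpha - \nabla\log u_\beta|^2}{e(f)^2}.
\]
The first term is nonnegative because $\nu|_{\mathcal{Z}}$ is positive semidefinite of affine type, with one-dimensional kernel spanned by the marks $(n_\alpha)$; it vanishes at $p$ iff $(u_\alpha)(p)\propto (n_\alpha)$, which is precisely the condition $[\phi,\phi^*](p) = 0$ — that is, $f_*T\tilde{S}$ being tangent to a flat at $f(p)$. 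The second term vanishes at $p$ iff all $\nabla\log u_\alpha(p)$ coincide. Hence $K_\nu(p) < 0$ unless both (i) $(u_\alpha)(p)\propto(n_\alpha)$ and (ii) equality of the gradients $\nabla\log u_\alpha(p)$ hold simultaneously.

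At a zero of $q := \phi_{-\delta}$, condition (i) fails trivially since $u_{-\delta}(p)=0$ while the other $u_\alpha(p) > 0$, so I may restrict attention to points with $q(p) \neq 0$. There, the global identity $\prod_\alpha u_\alpha^{n_\alpha} = C'|q|^2$ (derived from $\Theta_*\prod_\alpha\mu_\alpha^{n_\alpha} = 1$ in Proposition~\ref{prop: structure} together with the Hitchin-section form of the $\phi_\alpha$'s) combined with (i) pins down the common value $u_\alpha(p)/n_\alpha$ in terms of $|q(p)|^2$, and (ii) pins down the common gradient in terms of $\nabla\log|q|^2(p)$. Since $\sum_\beta\nu(\alpha,\beta)n_\beta = 0$, the Toda system also forces $\Delta\log u_\alpha(p)=0$ for all $\alpha$, yielding a rigid system of algebraic relations linking the values, first, and second derivatives of the $\mu_\alpha$'s at $p$.

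The final step is a case-by-case verification that this rigid system has no solution, based on the extended Dynkin diagram of $G$. For the classical families $A, B, C, D$ and for $E_6, F_4, G_2$, the structure of the affine relation $H_{-\delta} + \sum_{\alpha\in\Pi}n_\alpha H_\alpha = 0$ is constrained enough that the forced relations among $\mu_\alpha$, $q$, and $\nabla\log|q|^2$ at $p$ are inconsistent, completing the argument. The main obstacle — and the reason $E_7, E_8$ must be excluded — is that these two types have extended Dynkin diagrams with marks as large as $4$ and $6$, which creates enough algebraic flexibility that the simultaneous occurrence of (i) and (ii) at an isolated point cannot be excluded by this method, and would appear to require essentially new ideas.
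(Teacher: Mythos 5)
There are two genuine gaps here. First, you are conflating the intrinsic Gauss curvature of the induced metric with the extrinsic quantity $K_\nu(f_*T\tilde S)$ that the theorem is about. The identity $K=-\tfrac12\Delta\log e(f)/e(f)$ computes the curvature of the pullback metric $e(f)\mu$; by the Gauss equation for the minimal immersion $f$ it splits exactly into your two terms, of which only the first, proportional to $|\sum_{\alpha\in\mathcal{Z}} e_\alpha\alpha|_\nu^2/e(f)$, is the sectional curvature of the ambient tangent plane (compare (\ref{eq: K formula}) and (\ref{eq:curv-sign})). Your condition (ii) is the second-fundamental-form contribution and is irrelevant to $K_\nu(f_*T\tilde S)$: to prove the theorem you must rule out condition (i), namely $(e_\alpha(p))\propto(n_\alpha)$, at \emph{every} point, not merely rule out the simultaneous occurrence of (i) and (ii). Excluding ``(i) and (ii) together'' only controls the intrinsic curvature, which is the weaker statement, since intrinsic $\le$ extrinsic for a minimal surface.

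Second, and more fundamentally, the final step cannot work as described: conditions (i), (ii) together with the Toda equations at a single point $p$ impose only finitely many relations on the $2$-jets of the metrics $\mu_\alpha$ at $p$, and there is no local algebraic obstruction to satisfying them, since the $\mu_\alpha$ are $|\mathcal{Z}|$ essentially independent functions. The obstruction is global. The paper proves the strict pointwise inequality $\tilde e_{-\delta}<\tilde e_\alpha/n_\alpha$ on all of $S$ (Proposition \ref{prop:delta-smallest} via Lemma \ref{lm:order}) by applying Dai--Li's maximum principle on the \emph{closed} surface to the ratios $\log(f_{i+1}/f_i)$ along the extended Dynkin diagram, viewed as a path (types $\mathrm C_n,\mathrm F_4,\mathrm G_2$) or a folded path (types $\mathrm A_n,\mathrm E_6$, using the $G^{\R}$-symmetry $\tilde e_\alpha=\tilde e_{\sigma_0(\alpha)}$); the input that triggers the maximum principle is that $e_{-\delta}$ vanishes somewhere on $S$ while the $e_\alpha$, $\alpha\in\Pi$, do not. (The $\mathrm B_n,\mathrm D_n$ cases use a separate short maximum-principle argument on the prong of the diagram, again global.) Relatedly, your diagnosis of why $\mathrm E_7,\mathrm E_8$ are excluded is off: $\tilde{\mathrm F}_4$ also has a mark equal to $4$ and is covered; the real issue is that the extended Dynkin diagrams of $\mathrm E_7,\mathrm E_8$ are neither paths nor foldable to paths, so the ordering argument does not apply.
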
 
The type $\textrm{A}_n$ case is the result from \cite{DL}, and as we'll explain in \S \ref{sec: previous results}, the result is more or less proved for Lie groups of type $\textrm{B}_n$, $\textrm{C}_n$, and $\textrm{G}_2$ in \cite{DL}, but not stated as such. 

The proof of Theorem C consists of studying a system of equations derived from the Bochner-Toda equations (hence, from Theorem A), and then finding the right way to apply Dai-Li's maximum principle.  The proofs depend on the combinatorics of the extended Dynkin diagrams. For $\textrm{B}_n$ and $\textrm{D}_n,$ we find a quick proof that uses the ``prong" structure on the left end of the extended Dynkin diagram.
For cases $\textrm{C}_n, \textrm{F}_4,$ and $\textrm{G}_2,$ our argument makes strong use of the fact that the Dynkin diagram of $\mathcal{Z}$ is an undirected path. For cases $\textrm{A}_n$ and $\textrm{E}_6$, the Dynkin diagram of $\mathcal{Z}$ is not a path, but the $G^{\R}$-Higgs bundle condition imposes an extra symmetry that reduces our equations to a set of equations indexed by a folding of the Dynkin diagram, which is a path. The cases $\textrm{A}_n,$ $\textrm{C}_n$, $\textrm{E}_6,$ $\textrm{F}_4,$ and $\textrm{G}_2$ can then be treated all together. The $\textrm{E}_7$ and $\textrm{E}_8$ cases involve different combinatorics, which we don't pursue here.

Hitchin representations are defined for simple Lie groups; we include a discussion on the semisimple case in Remarks \ref{rem: products1} and \ref{rem: products2}. About semisimple groups, we find that we always have negative extrinsic curvature for products of Fuchsian representations into $\textrm{PSL}(2,\R)^n$ (for further comments see Remark \ref{rem: products2}). A component of such representations in the $\textrm{PSL}(2,\R)^n$ character variety can be seen as a cousin of the Hitchin component, so it's encouraging to know that the analogue of Dai-Li's conjecture holds for all such representations.
\begin{remark}
    In fact, Hitchin representations are defined in \cite{Hi} for groups of adjoint type. Except for rank $1$ groups, every Hitchin representation lifts to the split real form of the universal cover (see \cite{Hi}), so we're calling these lifts Hitchin as well.
\end{remark}

Finally, for the cases $\textrm{A}_n,$ $\textrm{C}_n$, $\textrm{E}_6$,  $\textrm{F}_4$, and $\textrm{G}_2$, we in fact deduce Theorem C from a more general result. Let $(P,\phi)$ be a simple and stable Coxeter cyclic $G$-Higgs bundle over a closed surface $S$ for $G$ of type $\textrm{C}_n$, $\textrm{F}_4,$ or $\textrm{G}_2$, or a simple and stable Coxeter cyclic $G^{\R}$-Higgs bundle for $G$ of type $\textrm{A}_n$ or $\textrm{E}_6.$ We assume that $(P,\phi)$ is not a $\C^*$-fixed point. Our work in \S \ref{sec: curvature} shows that for every $G$-Higgs bundle of the type above, there is an associated ordered set of divisors $D_0,\dots, D_\ell$, $\ell\leq |\mathcal{Z}|$. If $G$ is of type $\textrm{C}_n$, $\textrm{F}_4,$ and $\textrm{G}_2$, then these divisors are just the divisors of the holomorphic sections $\{\phi_\alpha\}_{\alpha\in\mathcal{Z}}$ from Proposition \ref{prop: structure}, ordered according to their positions from left to right on the extended Dynkin diagram. If $G$ is type $\textrm{A}_n$ or $\textrm{E}_6$, then these divisors are also divisors coming from $\{\phi_\alpha\}_{\alpha\in\mathcal{Z}}$, except we’ve cut out certain repetitions, and the ordering is done according to a different Dynkin diagram (which is a path, see \S \ref{sec: thms C and C'} for details). Viewing divisors as functions from $S\to \mathbb{Z}$, we write $D_i\leq D_j$ to mean that $D_i(x)\leq D_j(x)$ at every point $x\in S$. We write $D_i<D_j$ to specify futher that $D_i(x)<D_j(x)$ at every point $x$ in the support of $D_j$.  In the case of the Hitchin section, $D_0$ is the divisor of a holomorphic differential on $S$, and $D_1=\dots = D_{\ell}=0.$ 
\begin{thmcp}\label{thm: curvature extended version}
    For $G$ of type $\textrm{A}_n,\textrm{C}_n,\textrm{E}_6,\textrm{F}_4,$ or $\textrm{G}_2$, let $(P,\phi)$ be as above, with ordered set of divisors $D_0,\dots, D_{\ell}$, and equivariant harmonic map $f:\tilde{S}\to G/K$. Assume that 
    \begin{equation}\label{eq: divisor domination}
        D_0>D_1\geq D_2\geq \dots \geq D_{\ell}=0.
        \end{equation}
Then, at every point of $S$, $$K_{\nu}(f_*T\tilde{S})<0.$$
\end{thmcp}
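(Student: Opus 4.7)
The plan is to reduce the negativity of $K_\nu$ to a finite list of pointwise inequalities among the real $2$-forms $h_\alpha := \mu_\alpha(\phi_\alpha \wedge \overline{\phi}_\alpha)$ attached to the nodes $\alpha \in \mathcal{Z}$, and then establish those inequalities by differentiating Hitchin's equations (\ref{eq: firstthm}) and applying Dai-Li's maximum principle. The essential combinatorial input is that for each of the five Lie types, after folding by the $G^{\R}$-symmetry in types $\textrm{A}_n$ and $\textrm{E}_6$, the nodes of $\mathcal{Z}$ can be linearly ordered $\alpha_0, \alpha_1, \dots, \alpha_\ell$ along a path, matching the divisor ordering of (\ref{eq: divisor domination}).

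First, using the Gauss equation for the weakly conformal immersion $f$ together with the decomposition of $\phi$ from Proposition \ref{prop: structure} and Theorem A, I would express $K_\nu(f_* T\tilde{S})$ as a signed linear combination of the $h_i := h_{\alpha_i}$. A direct computation should reduce the negativity of $K_\nu$ to a sequence of nearest-neighbour comparisons of the form $h_i > c_i\, h_{i+1}$, where $c_i>0$ is determined by the Cartan integers $\nu(\alpha_i, \alpha_{i+1})$ along the path (and by the folding multiplicities in types $\textrm{A}_n$ and $\textrm{E}_6$). The hypothesis (\ref{eq: divisor domination}) then translates into the statement that the comparison functions $u_i := \log h_i - \log h_{i+1} - \log c_i$ blow up to $+\infty$ at points of $\mathrm{supp}(D_i)\setminus \mathrm{supp}(D_{i+1})$, hence are uniformly bounded below on $S$.

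Next, I would differentiate the Bochner-Toda equations coming from (\ref{eq: firstthm}) to compute $\Delta u_i$ off the divisor supports. Because the path has only nearest-neighbour bonds, $\Delta u_i$ couples to $u_{i-1}, u_i, u_{i+1}$ only, yielding a tridiagonal elliptic system. Applying Dai-Li's maximum principle \cite[Lemma 3.1]{Dai2018} (or the generalization in the appendix) to this system forces $u_i \geq 0$ globally on $S$, and the strict inequality $D_0 > D_1$ together with the strong maximum principle upgrades this to $u_i > 0$ at every point. Reassembling the inequalities $h_i > c_i h_{i+1}$ then yields $K_\nu(f_* T\tilde{S}) < 0$ everywhere.

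The main obstacle — and the reason the argument does not extend to types $\textrm{E}_7$ and $\textrm{E}_8$ — is the combinatorial step of verifying the sign hypotheses of the maximum principle: the tridiagonal structure is destroyed as soon as the extended Dynkin diagram contains a node of valence $\geq 3$. A secondary difficulty specific to the non-simply-laced cases $\textrm{C}_n, \textrm{F}_4, \textrm{G}_2$ and the folded cases $\textrm{A}_n, \textrm{E}_6$ is that the Cartan entries vary from bond to bond; absorbing these asymmetries requires rescaling the $u_i$ by carefully chosen positive weights (reflecting root lengths and folding multiplicities) so that the off-diagonal coefficients of the tridiagonal operator acquire a uniform sign. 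Once that linear-algebraic normalisation is in place, the five cases can be handled uniformly.
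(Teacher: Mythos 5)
Your overall skeleton --- fold types $\textrm{A}_n$ and $\textrm{E}_6$ along the $\sigma_0$-symmetry to get a path, compare weighted nearest-neighbour log-ratios of the root energies, derive a tridiagonal cooperative, column-diagonally-dominant system from the Bochner--Toda equations, and trigger Dai--Li's maximum principle through the blow-up forced by (\ref{eq: divisor domination}) --- is exactly the paper's strategy (Proposition \ref{prop:delta-smallest} and Lemma \ref{lm:order}), and you correctly anticipate that the weights come from root lengths and folding multiplicities (they are the entries of the null vector of the affine Cartan matrix). But two steps as written do not work. First, the direction of your comparison is backwards. Since $D_i\geq D_{i+1}$, the form $h_i$ vanishes \emph{more} than $h_{i+1}$, so on $\mathrm{supp}(D_i)\setminus\mathrm{supp}(D_{i+1})$ your function $u_i=\log h_i-\log h_{i+1}-\log c_i$ tends to $-\infty$, not $+\infty$, and the inequality $h_i>c_i\,h_{i+1}$ you aim for is false at every zero of $h_i$ where $h_{i+1}\neq 0$. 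The correct statement (Lemma \ref{lm:order}) is that the energy with the larger divisor is the \emph{smaller} one after weighting, and it is the reciprocal log-ratio that blows up to $+\infty$ at the zeros of the more-vanishing function, which is what triggers condition (2) of Theorem \ref{thm: DL max}. The fix is a consistent sign flip throughout, but as stated neither your target inequality nor the hypothesis under which you invoke the maximum principle holds.

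Second, $K_\nu(f_*T\tilde S)$ is not a signed linear combination of the $h_\alpha$. By (\ref{eq: K formula}) and (\ref{eq:curv-sign}), the numerator of $-K_\nu$ is the positive semidefinite quadratic form $4\bigl|\sum_{\alpha\in\mathcal{Z}}e_\alpha\alpha\bigr|_\nu^2$, which vanishes exactly when $(e_\alpha)_{\alpha}$ is proportional to $(n_\alpha)_\alpha$, the null direction of the affine Cartan matrix. So the reduction is not ``a linear combination is negative'' but ``the vector of energies is never proportional to $(n_\alpha)$,'' and for that a single strict weighted comparison suffices --- the chain of nearest-neighbour inequalities is the mechanism for propagating strictness from the node $-\delta$ (whose energy vanishes somewhere) to the node with $D_\ell=0$ (whose energy never vanishes). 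Relatedly, the quantity being controlled is the ambient sectional curvature of the tangent plane, computed from $R(X,Y)Z=-[[X,Y],Z]$, not the induced curvature via the Gauss equation. With the sign corrected and the curvature reduction restated in terms of failure of proportionality, your argument coincides with the paper's.
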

See Remark \ref{rem: BDextension} for a similar extension of Theorem C that applies to groups of type $\textrm{B}_n$ and $\textrm{D}_n.$ For completeness, we also explain that an analogous negative curvature result holds for $\C^*$-fixed points (see the end of \S \ref{subsec: curvature}).

Nie in \cite{Nie} and Collier-Toulisse in \cite{CB} impose conditions similar to (\ref{eq: divisor domination}) on the holomorphic data for alternating surfaces, showing that when the condition holds, the alternating surfaces are infinitesimally rigid. In \cite{Ev}, Evans utilizes inequalities that stem from conditions like (\ref{eq: divisor domination}) in his construction of domains of discontinuity for $\textrm{G}_2$-Hitchin representations. It thus seems reasonable to ask whether conditions like (\ref{eq: divisor domination}) are geometrically significant. We pose the following. 

\begin{que}\label{que: anosov}
    Given a $G$-Higgs bundle as in Theorem C', let $\rho:\pi_1(S)\to G$ be the representation obtained via the non-abelian Hodge correspondence. Is $\rho$ Anosov?
\end{que}

\subsection{Acknowledgements} We are grateful to Brian Collier, Qiongling Li, Oscar Garc{\'i}a-Prada, and J{\'e}r{\'e}my Toulisse for helpful discussions. We also thank the referees for sharing many thoughtful suggestions and remarks. At the time of writing and submission, N.S. was funded by the FNR grant O20/14766753, \textit{Convex Surfaces in Hyperbolic Geometry.}

\section{Preliminaries}
 Throughout this section, let $G$ be a semisimple (real) Lie group with no compact factors and Lie algebra $\mathfrak{g}$. We fix a $G$-invariant non-degenerate bilinear form $\nu$ on $\g$. When $G$ is simple, $\nu$ is a multiple of the Killing form, but in the semisimple case there is a choice of scaling on each simple factor. In any case, after this section, most of this paper will be concerned with simple Lie groups. 

\subsection{Homogeneous and symmetric spaces} 
Given a subgroup $H<G$ and a space $V$, an action $\alpha$ of $H$ on $V$ determines an equivalence relation via $(g,v)\sim (hg,\alpha(h)v)$ for all $h\in H.$ The associated bundle is $G\times_{\alpha} V=(G\times V)/\sim.$ When the action $\alpha$ is implicit, we might write $G\times_H V$. If $H$ preserves a structure on $V$ (vector space structure, Lie bracket, etc.) then the associated bundle inherits that structure.

For $g\in G$, let $L_g:G\to G$ be the left multiplication map. The Maurer-Cartan form of $G$ is the section $\omega$ of $\Omega^1(G,\g)$ defined on vectors $v\in T_gG$ by $$\omega(v) = (L_{g^{-1}})_*v.$$
The Maurer-Cartan form provides an isomorphism of vector bundles from $TG$ to $G\times \g$, and transports $\nu$ on $\g$ to a pseudo-Riemannian metric on $G$, which we continue to denote by $\nu$.

 Let $(M,\sigma)$ be a pseudo-Riemannian manifold with a $C^\infty$ and transitive action of $G$. If we fix a point $x\in M$ and set $H=\textrm{Stab}_G(x)$, $\mathfrak{h}=\textrm{Lie}(H)$, then $M$ identifies with $G/H$ and the Maurer-Cartan form yields an isomorphism between $TM$ and the associated bundle $G\times_H \g/\mathfrak{h}$. We will be interested in the case where $M$ is a reductive homogeneous space, which means that under the adjoint action of $H$, $\g$ admits a decomposition into $\textrm{ad}_H$-modules, $\g=\mathfrak{h}\oplus \mathfrak{m}.$ In this setting, $TM$ identifies with $M\times_{\textrm{ad}|_H} \mathfrak{m},$ and $\sigma$ is realized under this identification as coming from a non-degenerate bilinear form of $\g$ (which, when $G$ is simple, is a scalar multiple of $\nu$). Conversely, if $G/H$ is reductive, then any $\nu$ as above determines a $G$-invariant pseudo-Riemannian metric on $G/H.$
 
 The most important case under consideration here is when $H$ is a maximal compact subgroup $G$. To distinguish this case, we will write $K$ for maximal compact subgroups. Then (after possibly multiplying $\sigma$ by $-1$) the space $(M,\sigma$) is a Riemannian symmetric space of non-compact type. We always write the associated Lie algebra splitting, i.e., the Cartan decomposition, as $\g=\kk\oplus \p,$ with $\kk$ the Lie algebra of $K$, and $\p$ the $\nu$-orthogonal complement. When $G$ is simple, since the metric is unique up to scale, we will always assume it comes from the Killing form, and refer to this space as ``the symmetric space" of $G$. We might write just $G/K$ when we mean $(G/K,\nu).$

\subsection{Harmonic maps and Higgs bundles}\label{sec: introducing harmonic maps}
Let $(M,\sigma)$ be a pseudo-Riemannian manifold and let $\rho$ be an action of $\pi_1(S)$ on $(M,\sigma)$ by isometries. From a $C^2$ $\rho$-equivariant map $f:\tilde{S}\to (M,\sigma)$ we form the pullback bundle $f^*TM$, and the derivative $df$ defines a section of the endomorphism bundle $T^*\tilde{S}\otimes f^*TM$. Complexifying, let $df=\partial f +\overline{\partial}f$ be the decomposition into $(1,0)$ and $(0,1)$ parts. We denote by $\nabla$ the connection on $f^*TM\otimes \C$ induced by the Levi-Civita connection of $\sigma$, and we also use $\nabla$ for its extension to $f^*TM\otimes \C$-valued forms.  
\begin{defn}
The map $f$ is harmonic if $\nabla^{0,1}\partial f=0.$
\end{defn}

Now, take the target to be the symmetric space $(G/K,\nu)$ and assume from now on that $G$ is a complex Lie group. We will explain that an equivariant harmonic map to $(G/K,\nu)$ gives us the data of a $G$-Higgs bundle with a reduction of structure group satisfying the self-duality equations. Given a holomorphic $G$-bundle $P$ over $S,$ we denote the adjoint bundle $P \times_{\textrm{ad}}\g$ by $\textrm{ad}P$.
\begin{defn}
A $G$-Higgs bundle is a pair $(P,\phi),$ where $P$ is a principal $G$-bundle over $S$ and $\phi$ is a holomorphic section of $\textrm{ad}P\otimes \mathcal{K}$ called the Higgs field.
\end{defn}
Returning to harmonic maps, associated with $\rho$ we have the principal $G$-bundle $\tilde{S}\times_\rho G$ over $S$. The $\rho$-equivariant map $f$ is equivalent to a section of $\tilde{S}\times_\rho G$, which is in turn equivalent to a reduction of the structure group to $K$, say $Q\subset \tilde{S}\times_\rho G$. Using the isomorphism $TG/K=G/K\times_{\textrm{ad}|_K} \p,$ we see that $h^* TG/K$ identifies with $Q\times_{\textrm{ad}|_K}\p$. Since both $\p$ and $\kk$ complexify to $\g$, the complexification of $Q$ is a $G$-bundle, say $P$, and $f^*TG/K\otimes_{\R}\C= P \times_{\textrm{ad}}\g= \textrm{ad}P.$ Under this last identification, via the Koszul-Malgrange theorem we endow $P$ with the complex structure coming from the $(0,1)$-part of the Levi-Civita connection for $G/K.$ Moreover, $\partial f$ becomes a $(1,0)$-form valued in $\textrm{ad}P$, say $\phi,$ and the harmonic map equation expresses that $\phi$ is holomorphic. Taking stock, the harmonic map $f$ gives a $G$-Higgs bundle $(P,\phi).$  

\par Given any $G$-Higgs bundle $(P,\phi)$ (not necessarily the same as above), a reduction of the structure group to $K$ gives us an antilinear involution $\rho_h:\textrm{ad}P\to \textrm{ad}P,$ which acts as a Cartan involution on each fiber. Furthermore, $\rho_h$ determines a Hermitian metric $h$ on $\textrm{ad}P$ via $h(x,y)=\nu(x,-\rho_h(y)),$ and $-\rho_h$ defines a real structure on each fiber. As is customary, we denote $-\rho_h(x)=x^{*_h}$.
\begin{defn}
    Given a $G$-Higgs bundle $(P,\phi)$, we say that the reduction satisfies Hitchin's self-duality equations if $\nabla_h + \phi +\phi^{*_h}$ is flat, or, in other words, if 
    \begin{equation}\label{SDeqns}
        F(\nabla_h) + [\phi,\phi^{*_h}]=0,
    \end{equation}
    where $\nabla_h$ is the Chern connection on $P$ associated with $h$.
\end{defn}

Now, for our harmonic map $f:\tilde{S}\to (G/K,\nu)$ and its Higgs bundle $(P,\phi),$ under the identification $f^*TG/K\otimes_{\R}\C=\textrm{ad}P$, the pullback of the Levi-Civita connection of $\nu$ is in fact the Chern connection $\nabla_h$ on $P$ \cite{D}. As well, $\nabla_h +\phi+\phi^{*_h}$ is the natural flat connection. In other words, the map $f$ also encodes a solution to the self-duality equations.

A gauge transformation of a $G$-bundle $P$ is a smooth $G$-bundle isomorphism, i.e., a $G$-equivariant diffeomorphism from $P\to P$ that covers the identity. Any gauge transformation also induces an automorphism of $\textrm{ad}P$ that in each fiber in a trivialization is realized as an inner automorphism of $\g$. Given $P$, we will always denote the group of gauge transformations by $\mathcal{G}$. The gauge group also acts on the set of complex structures of $P$; given $g\in \mathcal{G}$, we write $g^*P$ for the smooth bundle $P$ equipped with the new complex structure. If $(P,\phi)$ is a $G$-Higgs bundle with a reduction of structure group inducing an involution $\rho_h$ solving (\ref{SDeqns}), and $g\in \mathcal{G}$, then $g^*\rho_h$ solves (\ref{SDeqns}) for $(g^*P,g^*\phi)$.
\begin{remark}\label{rem: S^1 invariance}
    If $(P,\phi)$ admits a reduction leading to a solution to (\ref{SDeqns}), then for any unit norm $\zeta\in \C^*,$ that same reduction solves the self-duality equations for $(P,\zeta\phi).$
\end{remark}
The two foundational existence results below constitute the non-abelian Hodge correspondence for complex Lie groups. A representation $\rho:\pi_1(S)\to G$ is irreducible if it is not contained in a parabolic subgroup, and $\rho$ is reductive if its Zariski closure is reductive. The following is due to Donaldson \cite{D} and Corlette \cite{Co}.

\begin{thm}\label{thm: donaldson, corlette}
Let $\rho:\pi_1(S)\to G$ be a reductive representation. There exists a $\rho$-equivariant harmonic map $f:\tilde{S}\to G/K.$ If $\rho$ is irreducible then $f$ is unique.
\end{thm}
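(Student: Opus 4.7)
The plan is to follow the heat flow approach to equivariant harmonic maps into non-compact symmetric spaces, as developed by Corlette (with a different but parallel argument in Donaldson's paper). I would start with any smooth $\rho$-equivariant map $f_0:\tilde{S}\to G/K$ and consider the harmonic map heat flow $\partial_t f_t = \tau(f_t),$ where $\tau$ is the tension field. Since the flow is $\rho$-equivariant, it descends to a parabolic equation on the compact surface $S,$ and short-time existence follows from standard parabolic theory. Because $G/K$ has non-positive sectional curvature, the Eells-Sampson Bochner inequality
$$\Delta e(f_t) \geq -C\, e(f_t)$$
together with a maximum principle on $S$ yields uniform bounds on the energy density along the flow, and standard bootstrapping then gives long-time existence.

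The main obstacle is that $G/K$ is non-compact, so although the total energy $E(f_t)$ is non-increasing along the flow, the maps $f_t$ could in principle drift to infinity in $G/K$ and fail to subconverge. This is precisely where reductivity of $\rho$ enters, via Corlette's compactness argument. I would argue by contradiction: if no subsequence $f_{t_n}$ subconverges after modification by ambient isometries, then, choosing a fundamental domain $D\subset\tilde{S}$ and analyzing a center-of-mass of $f_{t_n}(D)$ inside the visual compactification $G/K\cup\partial_\infty G/K,$ one shows that $\rho(\pi_1(S))$ must fix a point in $\partial_\infty G/K.$ Equivalently, $\rho$ takes values in a proper parabolic subgroup of $G,$ contradicting reductivity. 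Subconvergence then produces a smooth $\rho$-equivariant harmonic map $f.$ I expect this compactness step to be the hardest part, since it requires quantitative control of the trajectory of the flow at infinity.

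For uniqueness when $\rho$ is irreducible, I would use that $G/K$ is CAT(0). Given two $\rho$-equivariant harmonic maps $f_0, f_1,$ define the geodesic interpolation $F:[0,1]\times \tilde{S}\to G/K$ by letting $F(s,x)$ be the constant-speed geodesic from $f_0(x)$ to $f_1(x)$ at parameter $s.$ Each slice $F(s,\cdot)$ is $\rho$-equivariant, and convexity of the distance function on a CAT(0) space makes $s\mapsto E(F(s,\cdot))$ convex. Both endpoints are critical points of $E,$ so this function is constant in $s,$ and in particular every $F(s,\cdot)$ is harmonic. The equality case in the convexity estimate, combined with the second variation formula for energy, forces the variation field $\partial_s F\big|_{s=0}$ along $f_0$ to be a parallel section of $f_0^*T(G/K)$ arising from a nontrivial Killing field of $G/K$ that commutes with $\rho(\pi_1(S)).$ Since $\rho$ is irreducible, no such infinitesimal symmetry exists, so $\partial_s F \equiv 0$ and hence $f_0 = f_1.$
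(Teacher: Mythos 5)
The paper does not prove this statement at all: it is quoted as background and attributed to Donaldson \cite{D} and Corlette \cite{Co}, so there is no internal argument to compare yours against. Your sketch is a faithful outline of the actual Corlette--Donaldson proof (heat flow, Eells--Sampson Bochner estimate on the nonpositively curved target, long-time existence, and a compactness/convergence step that is where the hypothesis on $\rho$ enters), and the uniqueness argument via geodesic interpolation, convexity of energy on a $\mathrm{CAT}(0)$ target, and the rigidity of the equality case is also the standard one.

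There is, however, one genuine inaccuracy in the compactness step. You conclude that if the flow drifts to infinity then $\rho$ fixes a point of $\partial_\infty(G/K)$, hence lies in a proper parabolic subgroup, ``contradicting reductivity.'' Being contained in a proper parabolic contradicts \emph{irreducibility} (indeed this is exactly how the paper defines irreducible), not reductivity: a reductive representation can perfectly well land in a parabolic, e.g.\ in a Levi factor of one. The correct use of reductivity is that if $\rho(\pi_1(S))$ fixes $\xi\in\partial_\infty(G/K)$ and has reductive Zariski closure, then it is conjugate into a Levi subgroup of the corresponding parabolic, so it preserves the parallel set of a geodesic asymptotic to $\xi$ (equivalently, it also fixes an opposite point at infinity); one then restricts the problem to the associated totally geodesic sub-symmetric space and argues by induction on dimension, or equivalently shows the energy cannot actually escape. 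As written, your contradiction only closes the argument for irreducible $\rho$, whereas the existence assertion is for all reductive $\rho$. The uniqueness half is fine: for irreducible $\rho$ the centralizer of the image acts trivially on $G/K$, so the parallel variation field produced by the equality case must vanish.
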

There are notions of stability and polystability for a $G$-Higgs bundle $(P,\phi)$, defined in terms of an antidominant character for a parabolic subgroup of $G$ and a holomorphic reduction of the structure group of $P$. We need these notions for the next result, but we omit the definitions since we won't be making any use of them. Roughly speaking, we denote the moduli space of gauge equivalence classes of $G$-Higgs bundles by $\mathcal{M}_S(G)$ (see \cite{Fan} for the construction). A $G$-Higgs bundle on $S$ is simple if its stabilizer in the gauge group identifies in each fiber with the center of $G$ intersected with the kernel of the adjoint representation, and if $(P,\phi)$ is stable and simple, and stable as a $(G\times \overline{G})$-Higgs bundle, then $(P,\phi)$ defines a smooth point of $\mathcal{M}_S(G)$. See \cite{G-P} for the definitions and theory around stability and simplicity. The following is proved in \cite{S2} and \cite{BGM}.
\begin{thm}
   A $G$-Higgs bundle is polystable if and only if there exists a reduction of the structure group satisfying (\ref{SDeqns}). If it is stable, then the reduction is unique.
\end{thm}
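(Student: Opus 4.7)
The statement is the Hitchin--Kobayashi correspondence for $G$-Higgs bundles, and my plan is to outline the standard variational strategy (the one carried out in \cite{S2} and \cite{BGM}) rather than redo any heavy analysis. I would split the argument into three parts: the easy direction (existence of a solution implies polystability), the hard direction (polystability implies existence), and uniqueness in the stable case.

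For the easy direction, I would start from a reduction $h$ solving (\ref{SDeqns}) and, given any holomorphic reduction $\sigma$ of $P$ to a parabolic $P'\subset G$ with antidominant character $\chi$, compute the parabolic degree $\deg(\sigma,\chi)$ via a Chern--Weil integral pairing $F(\nabla_h)$ against $\chi$ along $\sigma$. Substituting $F(\nabla_h)=-[\phi,\phi^{*_h}]$ from (\ref{SDeqns}) turns the integrand into a sum of two terms, each of a definite sign by antidominance of $\chi$ and the positivity of the Hermitian pairing $\nu(\,\cdot\,,-\rho_h(\cdot))$ on $\textrm{ad}P$. This yields the polystability inequality $\deg(\sigma,\chi)\ge 0$, with equality forcing $\phi$ to preserve $\sigma$ and the reduction to split through a Levi, which is precisely the polystability condition.

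For the hard direction I would first reduce to the stable case: a polystable but non-stable $(P,\phi)$ admits, by definition, a reduction to a Levi subgroup on which it becomes a direct sum of stable Higgs bundles for smaller groups, so one solves (\ref{SDeqns}) on each piece and reassembles. In the stable case, the plan is to work with a Donaldson-type functional $\mathcal{L}$ on the space $\mathcal{H}$ of reductions of $P$ to $K$, which formally is a non-positively curved infinite-dimensional symmetric space; $\mathcal{L}$ is geodesically convex and its critical points are exactly the solutions of (\ref{SDeqns}). The key and hardest step would be to show that stability implies coercivity of $\mathcal{L}$: if coercivity fails, I would extract from a minimizing sequence, after a suitable $L^2_1$-normalization, a weak limit of self-adjoint sections of $\textrm{ad}P$ whose spectral decomposition produces a destabilizing reduction of $(P,\phi)$ to a parabolic, contradicting stability. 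This Uhlenbeck-type compactness argument is the main obstacle and is what requires the analytic input of \cite{S2,BGM}. Once coercivity is established, convexity together with lower semicontinuity delivers a minimizer, hence a solution.

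For uniqueness in the stable case, I would exploit strict geodesic convexity of $\mathcal{L}$: two solutions $h_0,h_1$ sit at the endpoints of a geodesic $h_t$ in $\mathcal{H}$ along which $\mathcal{L}$ is convex, yet is critical at both ends, so it is constant. The equality case of convexity then produces a covariantly constant self-adjoint section $u$ of $\textrm{ad}P$ that commutes with $\phi$; in the stable case, the image and kernel of $u$ would give invariant subbundles destabilizing $(P,\phi)$ unless $u$ is central in $\textrm{ad}P$, in which case the variation of reductions is trivial and $h_0$ and $h_1$ coincide modulo the stabilizer described after the theorem.
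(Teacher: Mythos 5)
The paper does not prove this theorem at all: it is quoted as a black box, with the sentence ``The following is proved in \cite{S2} and \cite{BGM}'' immediately preceding the statement, so there is no in-paper argument to compare yours against. Your outline is a faithful high-level account of how the proofs in those references actually go (Chern--Weil pairing against an antidominant character for the easy direction, reduction to the stable case plus a Donaldson functional with a coercivity-from-stability argument for existence, and strict geodesic convexity for uniqueness), and you correctly locate the genuinely hard step --- the compactness/coercivity argument --- in the analytic input of \cite{S2} and \cite{BGM}. Two caveats. First, as written this is a plan, not a proof: the destabilizing-reduction construction from a non-coercive minimizing sequence is precisely the content of the cited papers, so your text is at the same level of completeness as the paper's citation, not more. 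Second, in the easy direction be careful that the (poly)stability condition for $G$-Higgs bundles quantifies only over parabolic reductions $\sigma$ for which $\phi$ takes values in the corresponding parabolic subalgebra bundle; it is only under that compatibility that the $[\phi,\phi^{*_h}]$ contribution to $\deg(\sigma,\chi)$ acquires a definite sign, so the ``sum of two terms, each of a definite sign'' step needs that hypothesis stated explicitly.
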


Higgs bundles can be defined for real Lie groups, and a lot of the discussion above goes through with minor changes for real Lie groups, but in this paper we're concerned mainly with complex Lie groups. Formally, let $G_0$ be real and let $K$ be a maximal compact subgroup of $G_0$ with complexification $K^{\C}$. Setting $\g_0$ and $\mathfrak{k}$ to be the Lie algebras of $G_0$ and $K$ respectively, there is a 
Killing-orthogonal splitting $\g_0=\mathfrak{k}\oplus \p$. A $G_0$-Higgs bundle is a pair $(P_{K^{\C}},\phi),$ where $P_{K^{\C}}$ is a principal $K^{\C}$-bundle and $\phi$ is a holomorphic $1$-form valued in $\textrm{ad}\p^{\C}:=P_{K^{\C}}\times_{\textrm{Ad}|_{K^{\C}}} \p^{\C}$. A motivation for the definition is that a harmonic map to the symmetric space of $G_0$, similar to the procedure above, gives rise to a $G_0$-Higgs bundle. Observe that, taking $G$ to be the complexification of $G_0,$ by extending the  structure group to $G$ via $P:=P_{K^{\C}}\times_{K^{\C}} G,$ we can associate the $G$-Higgs bundle $(P,\phi)$. Later in this paper, we will take the perspective that $G_0$-Higgs bundles are $G$-Higgs bundles with some extra structure. 

Finally, while we're working in the setting of $G$-Higgs bundles in this paper, we'll encounter ordinary Higgs bundles at a few points. A Higgs bundle in this sense is a pair $(\mathcal{E},\phi)$ consisting of a holomorphic vector bundle $\mathcal{E}$ over $S$ together with holomorphic element of $\Omega^{1,0}(\textrm{End}(\mathcal{E}))$ called the Higgs field. Given a $G$-Higgs bundle $(P,\phi)$, a complex vector space $V,$ and a 
linear action $\sigma: G\to \textrm{GL}(V)$, we obtain an ordinary Higgs bundle as follows: the associated vector bundle $\mathcal{E}=P\times_\sigma V$ carries a holomorphic structure, and the induced action of $\phi$ on $\mathcal{E}$ defines a Higgs field. On the other hand, let $\mathcal{E}$ be a holomorphic vector bundle of rank $n$ with structure group $G,$ and let $\phi$ be a Higgs field on $\mathcal{E}$. The frame bundle $\mathcal{F}(\mathcal{E})$ of $\mathcal{E}$ is a $\textrm{GL}(n,\C)$-principal bundle, and via the natural isomorphism between the adjoint bundle $\textrm{ad}\mathcal{F}(\mathcal{E})$ and $\textrm{End}(\mathcal{E})$, $\phi$ gives rise to a holomorphic $\textrm{ad}\mathcal{F}(\mathcal{E})$-valued $(1,0)$-form, say $\mathcal{F}\phi$. Since the structure group of $\mathcal{E}$ reduces to $G$, we can reduce $\mathcal{F}(\mathcal{E})$ to a holomorphic principal $G$-bundle $P$, and if $\mathcal{F}\phi$ lives in the image of the inclusion $\textrm{ad}P\otimes \mathcal{K}\to \textrm{ad}\mathcal{F}(\mathcal{E})\otimes \mathcal{K},$ then we see that $(P,\mathcal{F}\phi)$ defines a $G$-Higgs bundle.

\section{Cyclic $G$-Higgs bundles and affine Toda equations}
Let $G$ be a complex simple Lie group with Lie algebra $\mathfrak{g}$ and Killing form $\nu$. In this section we use Lie theory to study cyclic $G$-Higgs bundles and the accompanying set of Hitchin's equations. After giving background on Lie theory and introducing (Coxeter) cyclic $G$-Higgs bundles, we prove Proposition \ref{prop: structure} and Theorem A in \S \ref{sec: cyclic definition} and \S \ref{sec: equation}. We then explain the case of $G^{\R}$-Higgs bundles in \S 3.5, and we recall the main example, that of $G$-Higgs bundles in the Hitchin section, in \S 3.6. Our main reference for the Lie theory is \cite[Chapter 3]{OV}. For related work, see \cite{Baraglia2010CyclicHB}, \cite{B}, \cite{C}, \cite{GP}, \cite{GPRi}, and \cite{Mcintosh}.
.

\subsection{Root systems and admissible systems}\label{sec: root systems}
Let $(V,\langle\cdot,\cdot\rangle)$ be an inner product space. For any non-zero vector $\alpha\in V,$ we have an orthogonal hyperplane $L_\alpha$ and reflection in $L_\alpha$, say $r_\alpha:V\to V$. A finite subset $\Delta$ of $V$ is a root system if 
\begin{itemize}
    \item for every $\alpha\in V$, $r_\alpha(\Delta)=\Delta,$ and 
    \item for every $\alpha,\beta\in V$, $2\frac{\langle \alpha,\beta\rangle}{\langle \beta,\beta\rangle} \in \mathbb{Z}$.
\end{itemize}
Given $\Delta,$ we can choose a set of positive roots $\Delta^+$, by which we mean
a subset such that
\begin{itemize}
    \item for each $\alpha\in \Delta,$ exactly one of $\alpha$ and $-\alpha$ is contained in $\Delta^+$, and
    \item if $\alpha,\beta\in \Delta^+$ and $\alpha+\beta$ is a root, then $\alpha+\beta\in \Delta^+.$
\end{itemize}
The choice of positive roots determines a subset $\Pi$, the set of simple roots, which is characterized by being linearly independent and by the property that every element in $\Delta^+$ is a non-negative linear combination of elements in $\Pi$. 

The simple roots of a root system are an example of an admissible system: a collection of vectors $\Gamma=\{v_1,\dots, v_n\}$ in $(V,\langle\cdot,\cdot\rangle)$ such that for every $i\neq j$, $a_{ij}:=2\frac{\langle v_i,v_j\rangle}{\langle v_j,v_j\rangle}$ is a non-positive integer \cite[Chapter 3, \S 1.7]{OV}. Out of an admissible system $\Gamma$, we can form a Cartan matrix $A(\Gamma)=(a_{ij})_{i,j=1}^n$, and a Dynkin diagram via the usual rules (see \cite[Chapter 3, \S 1.7]{OV}). An automorphism of $\Gamma$ is a permutation that preserves the inner products $\langle v_i, v_j\rangle.$ Given $\Gamma,$ we write the automorphism group as $\textrm{Aut}(\Gamma)$.

The central examples of root systems and admissible systems, and the ones most important for this paper, are those coming from Lie algebras. A Cartan subalgebra $\mathfrak{h}$ of $\g$ is a maximal abelian semisimple subalgebra. Simultaneously diagonalizing the adjoint action of $\mathfrak{h}$ on $\g$ yields the root space decomposition $$\g=\oplus_{\alpha} \g_{\alpha},$$ where $\alpha\in \mathfrak{h}^*$ is a root, i.e., a weight of the adjoint representation restricted to $\mathfrak{h}$, and $\g_{\alpha}$ is the root space $$\g_{\alpha}=\{x\in\g^{\C}: \textrm{ for all } h\in \mathfrak{h}, [h,x]=\alpha(h)x\}.$$
 The roots of the Lie algebra form a root system $\Delta$ on $(\mathfrak{h}^*,\nu)$. Choosing positive roots $\Delta^+$, the simple roots $\Pi$ are an admissible system.

Given a root $\beta = \sum_{\alpha \in \Pi} m_\alpha \alpha,$ $m_\alpha\in\mathbb{Z}$, the height of $\beta$ is $\sum_{\alpha \in \Pi} m_\alpha.$ The highest root $\delta$ is the root with the largest height. We will write $\delta=\sum_{\alpha \in \Pi} n_\alpha \alpha$. 
We define the extended simple root system $\mathcal{Z}$ to be $\Pi\cup\{-\delta\}$; $\mathcal{Z}$ is an admissible system itself, and its Dynkin diagram is called the extended Dynkin diagram of $\mathfrak{g}$. 

At a few points in this paper, it will be helpful to make use of a basis for $\g$ that's well adapted to the root system. For every root $\alpha$ we have the vector $h_\alpha\in \mathfrak{h}$, defined by the relation that, for all roots $\beta$, $\beta(h_\alpha) = 2\frac{\nu(\alpha,\beta)}{\nu(\alpha,\alpha)}$, where $\nu$ here is the dualized Killing form on $\g^*.$ One can choose root vectors $e_\alpha \in \g_\alpha$ so that we have a Chevalley basis for $\g$, $$\{e_\alpha,e_{-\alpha},h_\beta:\alpha\in \Delta^+, \beta\in \Pi\},$$ characterized by $[e_\alpha,e_{-\alpha}]=-h_\alpha$ and $[e_\alpha,e_\beta]=N_{\alpha,\beta}e_{\alpha+\beta}$, for some integers $N_{\alpha,\beta}$ (see \cite[Chapter 3, Theorem 1.19]{OV}). For later use, we take note of the following fact about $\mathcal{Z}$. 
\begin{lem}\label{lem: commutators}
    If $\alpha,\beta\in\mathcal{Z}$, then, in a basis as above, $[e_\alpha,e_{-\beta}]=-\delta_{\alpha\beta} h_\alpha.$
\end{lem}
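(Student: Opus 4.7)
The plan is to split into two cases based on whether $\alpha = \beta$ or not, and for the off-diagonal case, to argue that $\alpha - \beta$ cannot be a root for $\alpha,\beta\in\mathcal{Z}$.

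First, the diagonal case $\alpha=\beta$. This is essentially the defining relation of a Chevalley basis: $[e_\alpha,e_{-\alpha}]=-h_\alpha$ for every positive root $\alpha$. The only minor thing to check is that this also works when $\alpha=-\delta$, in which case $[e_{-\delta},e_\delta]=-[e_\delta,e_{-\delta}]=h_\delta=-h_{-\delta}$, using the defining formula $\beta(h_\gamma)=2\nu(\gamma,\beta)/\nu(\gamma,\gamma)$ to see that $h_{-\delta}=-h_\delta$. So in either subcase the right-hand side $-\delta_{\alpha\beta}h_\alpha=-h_\alpha$ is produced.

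Second, the off-diagonal case $\alpha\neq\beta$. In a Chevalley basis, $[e_\alpha,e_{-\beta}]$ lies in $\mathfrak g_{\alpha-\beta}$, and is therefore either a nonzero scalar multiple of $e_{\alpha-\beta}$ (if $\alpha-\beta$ is a root) or zero. Since $\alpha\neq\beta$ rules out $\alpha-\beta=0$, it suffices to rule out $\alpha-\beta\in\Delta$. I would run through the possibilities:
\begin{itemize}
    \item If $\alpha,\beta\in\Pi$ are distinct simple roots, then $\alpha-\beta$ has coefficient $+1$ on $\alpha$ and $-1$ on $\beta$ in the basis $\Pi$, so it is neither a non-negative nor a non-positive integer combination of simple roots, hence not a root.
    \item If $\alpha=-\delta$ and $\beta\in\Pi$, then $\alpha-\beta=-(\delta+\beta)$; were this a root, so would be $\delta+\beta$, contradicting that $\delta$ has maximal height. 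The symmetric case $\alpha\in\Pi$, $\beta=-\delta$ is identical.
    \item The case $\alpha=\beta=-\delta$ is excluded.
\end{itemize}
In every subcase $\alpha-\beta\notin\Delta$, so $[e_\alpha,e_{-\beta}]=0$, matching $-\delta_{\alpha\beta}h_\alpha=0$.

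There is no real obstacle here: the lemma is a direct consequence of the definition of a Chevalley basis together with the fact that $\delta$ is the highest root. The only subtlety worth flagging is the sign/normalization check $h_{-\delta}=-h_\delta$, which makes the diagonal formula come out uniformly on $\mathcal{Z}=\Pi\cup\{-\delta\}$ despite $-\delta$ being a negative root rather than a simple one.
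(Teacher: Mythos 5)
Your proof is correct and follows essentially the same route as the paper's: the diagonal case is the defining Chevalley relation $[e_\alpha,e_{-\alpha}]=-h_\alpha$, and the off-diagonal case is ruled out by showing $\alpha-\beta$ is never a root for distinct $\alpha,\beta\in\mathcal{Z}$ (distinct simple roots by simplicity, and the $-\delta$ cases by maximality of the height of $\delta$). Your explicit check that $h_{-\delta}=-h_\delta$, so the diagonal formula holds uniformly at $\alpha=-\delta$, is a detail the paper leaves implicit; it is a worthwhile addition but not a different argument.
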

\begin{proof}
Recall that $[\g_\alpha,\g_\beta]$ is non-zero if and only if $\alpha+\beta$ is a root. Let $\alpha$ and $\beta$ be distinct simple roots. If $\alpha-\beta$ is a root, then without loss of generality it is positive, which contradicts simplicity of $\beta$. For any positive root $\alpha$, $\alpha+\delta$ is not a root, since if it were its height would be greater than that of $\delta$.
\end{proof}
As well, it will be helpful in the proof of Theorem A to have a Cartan involution of $\g$ that preserves $\mathfrak{h}.$ With respect to the Chevalley base, we define $\hat{\rho}:\g\to\g$ by
\begin{equation}\label{def: rho_0}
    \hat{\rho}(h_\alpha)=-h_\alpha, \hspace{1mm} \hat{\rho}(e_{\alpha})=-e_{-\alpha}, \hspace{1mm} \hat{\rho}(e_{-\alpha}) = - e_{\alpha}.
\end{equation}
This is the same $\hat{\rho}$ that Baraglia works with in \cite{Baraglia2010CyclicHB}.

Finally, to study systems of equations indexed by admissible systems, which we'll do to prove Theorems B,C, and C', we need to recall the relevant facts about the angles between simple roots. For $(V,\langle\cdot,\cdot\rangle)$ an arbitrary inner product space with root system $\Delta$, and $\alpha,\beta\in \Delta,$ set $a_{\alpha,\beta}=2\frac{\langle\alpha,\beta\rangle}{\langle\alpha,\alpha\rangle}$. Proofs for the proposition below can be found in \cite[Chapter 3, Proposition 1.7 and Theorem 1.12]{OV}.
\begin{prop}\label{prop: angle relations}
    Let $\Delta$ be a root system on an inner product space as above, with positive root set $\Delta^+$ and simple roots $\Pi$.
    \begin{itemize}
        \item If $\alpha,\beta\in \Pi$, then $a_{\alpha,\beta}=2$ if $\alpha=\beta$ and $a_{\alpha,\beta}\leq 0$ for $\alpha\neq \beta.$
        \item The highest root $\delta$ is dominant: if $\alpha$ is a simple root, then $a_{\alpha,\delta}\geq 0$ (and hence $a_{\alpha,-\delta}\leq 0$). 
    \end{itemize}
\end{prop}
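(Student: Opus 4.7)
The plan is to derive both parts from a standard root-string lemma: if $\alpha, \beta \in \Delta$ are distinct roots with $\langle\alpha,\beta\rangle > 0$, then $\beta - \alpha \in \Delta$, while if $\langle\alpha,\beta\rangle < 0$, then $\alpha + \beta \in \Delta$. This follows from the unbroken structure of $\alpha$-root strings and the integrality axiom of a root system, and is exactly the content of \cite[Chapter 3, Proposition 1.7]{OV}, which one may either quote or reprove via the usual rank-two reduction using $\mathfrak{sl}_2$-triples.

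For the first bullet, the case $\alpha = \beta$ reduces to $a_{\alpha,\alpha} = 2$ by direct evaluation. For $\alpha \neq \beta$ in $\Pi$, I would assume for contradiction that $\langle\alpha,\beta\rangle > 0$. The root-string lemma then gives $\gamma := \beta - \alpha \in \Delta$, so exactly one of $\gamma$ and $-\gamma$ lies in $\Delta^+$, and whichever it is expands as a nonnegative integer combination of elements of $\Pi$. But $\gamma = \beta - \alpha$ has coefficient $-1$ on the simple root $\alpha$, while $-\gamma = \alpha - \beta$ has coefficient $-1$ on $\beta$; by linear independence of $\Pi$, neither admits a nonnegative expansion. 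This contradiction forces $\langle\alpha,\beta\rangle \leq 0$, whence $a_{\alpha,\beta} \leq 0$.

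For the second bullet, let $\alpha \in \Pi$. If $\langle\alpha,\delta\rangle < 0$, the lemma gives $\alpha + \delta \in \Delta$, but the height of $\alpha + \delta$ strictly exceeds that of $\delta$, contradicting the defining maximality of $\delta$. Hence $\langle\alpha,\delta\rangle \geq 0$, so $a_{\alpha,\delta} \geq 0$; negating the second argument and observing $\langle -\delta, -\delta\rangle = \langle \delta,\delta\rangle$ yields $a_{\alpha,-\delta} = -a_{\alpha,\delta} \leq 0$. The only genuine obstacle in this plan is the auxiliary root-string lemma itself, as the remainder reduces to bookkeeping with the definitions of $\Pi$ and $\delta$; since the paper is applying the statement and not deriving the root-system axioms from scratch, simply citing \cite[Chapter 3, Proposition 1.7 and Theorem 1.12]{OV} and presenting the two contradiction arguments above is the most efficient route.
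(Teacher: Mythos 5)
Your argument is correct and matches the paper, which offers no independent proof but simply cites \cite[Chapter 3, Proposition 1.7 and Theorem 1.12]{OV}; the root-string lemma plus the contradiction with the nonnegative-expansion characterization of $\Delta^+$ (first bullet) and with the maximality of the height of $\delta$ (second bullet) is exactly the standard argument behind those references. The only cosmetic point is the degenerate case $\alpha=\delta$ in the second bullet (possible only in rank one), where $a_{\alpha,\delta}=2\geq 0$ trivially and the root-string lemma is not needed.
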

For root systems arising from Lie algebras, the angles between the roots are constrained, so that $a_{\alpha,\beta}=0,2,\sqrt{2},$ or $\frac{2}{\sqrt{3}}.$ These values can be read off of the Cartan matrix or the Dynkin diagram of the extended system of simple roots.

\subsection{Inner automorphisms of Lie algebras}\label{sec: aut} Here we review the known classification of inner semisimple automorphisms of simple Lie algebras, and then we discuss Coxeter automorphisms.

We continue with $G, \mathfrak{g}, \mathfrak{h},$ $\Pi,$ etc., as in \S \ref{sec: root systems}. Any element $x\in\mathfrak{h}$ is determined by the collection $\{\alpha(x): \alpha\in \Pi\}$. Setting $x_{-\delta} = 1-\delta(x)$ and $x_\alpha=\alpha(x)$ for $\alpha\in \Pi$ we call $\{x_\alpha: \alpha \in \mathcal{Z}\}$ the barycentric coordinates of $x$. For $n_{-\delta}=1$, the barycentric coordinates satisfy $\sum_{\alpha\in\mathcal{Z}} n_\alpha x_\alpha= 1.$ 

The adjoint group of $G$, say $\textrm{ad}G$, is the image of $G$ under the adjoint representation. The fundamental group $\pi_1(\textrm{ad}G)$ is naturally identified with a subgroup of $\textrm{Aut}(\mathcal{Z})$ that acts simply transitively on the roots $\alpha$ with $n_\alpha =1$ (see \cite[Chapter 3, \S 3.6]{OV}). Moreover, $\pi_1(\textrm{ad}G)$ acts on $\mathfrak{h}$ by permuting barycentric coordinates. The theorem below is the classification of inner semisimple automorphisms.
\begin{thm}[Theorem 3.11 in Chapter 3 of \cite{OV}]\label{thm: 3.3.11}
Any inner semisimple automorphism of $\mathfrak{g}$ is conjugate to an automorphism of the form $\textrm{ad}_{g_x}$, where ${g_x}=\textrm{exp}(2\pi ix)$, and $x\in \mathfrak{h}$ satisfies 1) $\textrm{Re}(x_\alpha)\geq 0$ for all $i$, and 2) if $\textrm{Re}(x_\alpha)=0$, then $\textrm{Im}(x_\alpha)\geq 0$. Two such automorphisms are conjugate if and only if the barycentric coordinates of the first can be obtained from those of the second by a permutation from $\pi_1(\textrm{ad}G).$
\end{thm}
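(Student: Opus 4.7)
The plan is a standard reduction to the maximal torus plus a fundamental-domain argument for the extended affine Weyl action. Every inner semisimple automorphism of $\mathfrak{g}$ has the form $\mathrm{ad}_g$ with $g\in\mathrm{ad}G$ semisimple, and every such $g$ is conjugate into the maximal torus $T=\exp(\mathfrak{h})$. Writing $g=\exp(2\pi ix)$ reduces the classification to describing orbits of $x\in\mathfrak{h}$ under the group $W\ltimes\Lambda$, where $W$ is the Weyl group (coming from conjugation by $N_{\mathrm{ad}G}(T)$) and $\Lambda=\ker(\exp(2\pi i\,\cdot):\mathfrak{h}\to T)$. Since $G$ is of adjoint type, $\Lambda$ is the coweight lattice $\Lambda_{cw}$, which strictly contains the coroot lattice $\Lambda_{cr}$ with quotient $\Lambda_{cw}/\Lambda_{cr}\cong\pi_1(\mathrm{ad}G)$.

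For the real part of $x$, I would invoke the classical fact that the affine Weyl group $W_{\mathrm{aff}}=W\ltimes\Lambda_{cr}$ has as a strict fundamental domain on $\mathfrak{h}_{\mathbb{R}}$ the closed alcove cut out by $\alpha(x)\geq 0$ for $\alpha\in\Pi$ together with $\delta(x)\leq 1$; in barycentric coordinates this is exactly $\{x_\alpha\geq 0:\alpha\in\mathcal{Z}\}$. Some element of $W_{\mathrm{aff}}$ therefore carries $\mathrm{Re}(x)$ into this region, which delivers condition (1). The stabilizer of $\mathrm{Re}(x)$ in $W_{\mathrm{aff}}$ is a finite reflection group, generated by the reflections $r_\alpha$ across the walls $\{x_\alpha=0\}$ containing $\mathrm{Re}(x)$; these act on $\mathrm{Im}(x)$ as orthogonal reflections on the subspace spanned by the corresponding normals. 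Applying the standard fundamental-domain theory for finite Coxeter groups, I can then arrange $\mathrm{Im}(x_\alpha)\geq 0$ for every $\alpha\in\mathcal{Z}$ with $\mathrm{Re}(x_\alpha)=0$, giving condition (2) without disturbing condition (1).

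For uniqueness, the residual ambiguity after quotienting by $W_{\mathrm{aff}}$ is precisely $(W\ltimes\Lambda_{cw})/W_{\mathrm{aff}}=\Lambda_{cw}/\Lambda_{cr}\cong\pi_1(\mathrm{ad}G)$, which acts on the fundamental alcove by affine isometries. As recalled in the excerpt (Chapter 3, \S3.6 of \cite{OV}), $\pi_1(\mathrm{ad}G)$ embeds into $\mathrm{Aut}(\mathcal{Z})$ and acts simply transitively on the roots $\alpha\in\mathcal{Z}$ with $n_\alpha=1$, permuting the barycentric coordinates accordingly. Thus two normalized $x,y$ yield conjugate $g_x,g_y$ exactly when their barycentric coordinates differ by such a permutation, which is the stated uniqueness.

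The main technical delicacy I expect is the imaginary-part step: one must verify that the stabilizer of $\mathrm{Re}(x)$ in $W_{\mathrm{aff}}$ genuinely acts as a reflection group on the relevant subspace of $\mathrm{Im}(x)$, so that simultaneous sign normalization is possible. This reduces to the combinatorial fact that each facet of the fundamental alcove corresponds to a proper subset of $\mathcal{Z}$ generating a sub-Coxeter system of the extended Dynkin diagram; once this identification is in hand, the existence of a common fundamental domain with the required inequalities follows from standard Coxeter-group theory.
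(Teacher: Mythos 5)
This statement is not proved in the paper at all: it is quoted verbatim (as Theorem 3.11 in Chapter 3 of \cite{OV}) and used as a black box, so there is no in-paper argument to compare against. Your sketch is the standard proof of that cited result and is essentially sound: conjugate the semisimple element into the maximal torus of $\textrm{ad}G$, reduce to orbits of $x\in\mathfrak{h}$ under the extended affine Weyl group $W\ltimes\Lambda_{cw}$, normalize $\mathrm{Re}(x)$ into the closed fundamental alcove (which in barycentric coordinates is exactly $\{x_\alpha\geq 0\}$), and then normalize $\mathrm{Im}(x)$ using the stabilizer of $\mathrm{Re}(x)$ in $W_{\mathrm{aff}}$, which is indeed the finite reflection group generated by the reflections in the walls through $\mathrm{Re}(x)$ (these correspond to a proper, hence linearly independent, subset $J\subsetneq\mathcal{Z}$, and their linear parts act on $\mathrm{Im}(x)$ as the finite Coxeter group $W_J$). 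The one place where you should add a few lines is the ``only if'' half of the uniqueness claim: given two normalized $x,y$ in the same $W\ltimes\Lambda_{cw}$-orbit, you must first use strictness of the alcove to force the $W_{\mathrm{aff}}$-part of the relating element to lie in the stabilizer of $\mathrm{Re}(x)$, and then use the strict fundamental domain property of the $W_J$-chamber on the imaginary parts to conclude that the relating element can be taken in $\Omega\cong\pi_1(\textrm{ad}G)$; as written you assert the conclusion without running this two-step argument. With that filled in, the proof is complete and matches the argument in \cite{OV}.
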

Any finite order automorphism of $\g$ is semisimple, by consideration of the minimal polynomial. If $s$ is an inner automorphism of $\g$ of finite order $k,$ then the element $x\in \mathfrak{h}$ satisfies, in the barycentric coordinates, $x_\alpha = p_\alpha/k$, for some non-negative $p_\alpha\in \mathbb{Z}$ such that $\sum_{\alpha\in \mathcal{Z}} n_\alpha p_\alpha = k.$
\begin{defn}\label{def: coxeter}
The Coxeter number $r$ of the root system $\Delta$ is $\sum_{\alpha\in \mathcal{Z}} n_\alpha.$ 
\end{defn}
For exposition on the Coxeter number, see \cite[\S 3.16]{Humph}. It can be characterized, among other ways, as the order of a Coxeter element of the Weyl group (a product of all reflections coming from a choice of simple roots), or as the highest degree of a polynomial in a homogeneous generating set for the algebra of $\textrm{ad}(G)$-invariant polynomials on $\g$.
\begin{defn}
    We say that an inner automorphism $s$ of $\mathfrak{g}$ is a Coxeter automorphism if $s$ has order $r.$ An element of $G$ inducing $s$ is called a Coxeter element.
\end{defn}
The condition that $s$ is a Coxeter automorphism forces every $p_i$ to be equal to $1$. We deduce the following properties.
\begin{prop}\label{prop: coxeter properties}
    All Coxeter automorphisms of $\g$ are conjugate. For $s$ an order $m$ automorphism of $\g$, let $\g=\oplus_{i=0}^{m-1}\g_i$ be the eigenspace grading, $\g_j=\{x\in \g: s(x)=e^{\frac{2\pi i j}{m}}x\}$. Then, if $s$ is Coxeter,
    \begin{enumerate}
        \item $\g_0=\mathfrak{h}.$
        \item $\g_1$ is the sum of the extended simple root spaces.
    \end{enumerate}
\end{prop}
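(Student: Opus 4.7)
The plan is to use Theorem \ref{thm: 3.3.11} together with the explicit classification of inner automorphisms to reduce everything to a direct computation on root spaces. By the discussion preceding the proposition, a Coxeter automorphism $s$ is conjugate to $\mathrm{ad}_{g_x}$ for some $x \in \mathfrak{h}$ whose barycentric coordinates are non-negative integers $p_\alpha/r$ with $\sum_{\alpha \in \mathcal{Z}} n_\alpha p_\alpha = r$. Since $\sum_{\alpha \in \mathcal{Z}} n_\alpha = r$ already and each $n_\alpha, p_\alpha \geq 1$ (we must have $p_\alpha \geq 1$ because if some $p_\alpha = 0$ the order of $s$ would divide a proper divisor of $r$; this is the point I would want to verify carefully using Theorem \ref{thm: 3.3.11}'s uniqueness of barycentric coordinates), we conclude every $p_\alpha = 1$. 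The barycentric coordinates $(1/r, \dots, 1/r)$ are invariant under any permutation in $\pi_1(\mathrm{ad}\,G) \subseteq \mathrm{Aut}(\mathcal{Z})$, so Theorem \ref{thm: 3.3.11} yields the first claim: all Coxeter automorphisms are conjugate.

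For (1) and (2), I would diagonalize $\mathrm{ad}_{g_x}$ using the root space decomposition $\g = \mathfrak{h} \oplus \bigoplus_{\beta \in \Delta} \g_\beta$. Since $\mathrm{ad}_{g_x}$ acts trivially on $\mathfrak{h}$ and by $e^{2\pi i \beta(x)}$ on $\g_\beta$, everything reduces to computing $\beta(x)$. If $\beta = \sum_{\alpha \in \Pi} m_\alpha \alpha \in \Delta^+$ has height $h(\beta) = \sum_{\alpha \in \Pi} m_\alpha$, then $\beta(x) = h(\beta)/r$. The highest root $\delta$ has height $\sum_{\alpha \in \Pi} n_\alpha = r - 1$ (since $n_{-\delta} = 1$), so $h(\beta) \in \{1, 2, \dots, r-1\}$ for positive roots and $\beta(x) \in \{\pm 1/r, \dots, \pm(r-1)/r\}$ for all roots. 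In particular $\beta(x) \notin \mathbb{Z}$ for every $\beta \in \Delta$, which gives $\g_0 = \mathfrak{h}$.

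For (2), I would identify the $e^{2\pi i/r}$-eigenspace by solving $\beta(x) \equiv 1/r \pmod{\mathbb{Z}}$. For $\beta \in \Delta^+$ this reads $h(\beta)/r \equiv 1/r$, forcing $h(\beta) = 1$, i.e., $\beta \in \Pi$. For $\beta \in \Delta^-$, writing $\beta = -\gamma$ with $\gamma \in \Delta^+$, the condition becomes $-h(\gamma)/r \equiv 1/r \pmod{\mathbb{Z}}$, i.e., $h(\gamma) \equiv -1 \equiv r - 1 \pmod r$; combined with $h(\gamma) \leq r - 1$, this forces $h(\gamma) = r - 1$, so $\gamma = \delta$ and $\beta = -\delta$. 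Hence $\g_1 = \bigoplus_{\alpha \in \mathcal{Z}} \g_\alpha$, as claimed.

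The only genuinely delicate point I anticipate is justifying that $p_\alpha \geq 1$ for every $\alpha \in \mathcal{Z}$, i.e., the existence and uniqueness of the representative from Theorem \ref{thm: 3.3.11} combined with the ``exact order $r$'' hypothesis; once this is dispatched, both (1) and (2) are quick consequences of the height-versus-Coxeter-number bookkeeping above. Conjugacy, (1), and (2) are then each a one-line consequence of the classification plus root system combinatorics.
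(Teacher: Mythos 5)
Your overall route is the same as the paper's: conjugate $s$ into the normal form of Theorem \ref{thm: 3.3.11}, argue that all the integers $p_\alpha$ equal $1$, and then read off the grading from the heights of the roots. Your computations for parts (1) and (2) are correct and are in fact more explicit than what the paper records: the paper proves $\g_0=\mathfrak{h}$ by a regularity/minimal-fixed-subalgebra argument, whereas you simply observe that $\beta(x)=h(\beta)/r$ is never an integer, and your identification of the $e^{2\pi i/r}$-eigenspace via $h(\beta)\equiv 1\pmod{r}$ is exactly the intended bookkeeping.

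The gap is precisely at the step you flagged, and your proposed repair does not work: it is false that some $p_\alpha=0$ would force the order of $s$ to divide a proper divisor of $r$. The order of $\mathrm{ad}_{g_x}$ is the least $N$ with $Np_\alpha/r\in\mathbb{Z}$ for all $\alpha\in\Pi$, i.e.\ $r/\gcd(r,\{p_\alpha\}_{\alpha\in\Pi})$, and a single vanishing coordinate does not lower this. Concretely, for $\g=\mathfrak{sl}(3,\mathbb{C})$ (so $r=3$ and every $n_\alpha=1$), the coordinates $(p_{-\delta},p_{\alpha_1},p_{\alpha_2})=(0,1,2)$ satisfy the conditions of Theorem \ref{thm: 3.3.11} and $\sum_{\alpha\in\mathcal{Z}} n_\alpha p_\alpha=3$, yet give an inner automorphism of order exactly $3$ whose fixed subalgebra is $\mathfrak{h}\oplus\g_{\delta}\oplus\g_{-\delta}$; since that fixed subalgebra is strictly larger than $\mathfrak{h}$, this automorphism is not conjugate to the one with coordinates $(1,1,1)$. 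So ``order $=r$'' alone does not imply all $p_\alpha=1$, and no divisibility argument can close this. To be fair, the paper asserts the same implication without proof (``the condition that $s$ is a Coxeter automorphism forces every $p_i$ to be equal to $1$''), so the difficulty is inherited from the definition rather than introduced by you. The standard fix is to require in addition that $s$ be regular, i.e.\ that its fixed subalgebra be a Cartan subalgebra, which by the eigenvalue computation is equivalent to $p_\alpha\geq 1$ for all $\alpha\in\mathcal{Z}$; then $\sum_{\alpha\in\mathcal{Z}}n_\alpha(p_\alpha-1)=r-r=0$ with non-negative terms forces $p_\alpha=1$ for all $\alpha$, and the rest of your argument goes through verbatim.
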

The first assertion is immediate from Theorem \ref{thm: 3.3.11}. To see that $\g_0=\mathfrak{h}$, we conjugate $s$ into the form of Theorem \ref{thm: 3.3.11}, $s= \textrm{exp}(2\pi i x)$. It is seen from the structure of $x$ that $s$ is regular in the sense that if $s'$ is another inner automorphism whose fixed-point subalgebra is $\g_0',$ then $\dim \g_0\leq \dim \g_0',$ from which the claim follows. And to isolate the $\g_1$ in the grading, we again use the explicit form of the $x$.

\subsection{Cyclic $G$-Higgs bundles}\label{sec: cyclic definition}
Let $P$ be a holomorphic $G$-bundle over a Riemann surface $S$ with gauge group $\mathcal{G}$ and adjoint bundle $\textrm{ad}P.$ An element $s\in \mathcal{G}$ induces an automorphism of $\textrm{ad}P,$ which in any fiber in any trivialization of $\textrm{ad}P$ is realized as an inner automorphism of $\g$, which we continue to denote by $s$. If $s$ has finite order, then by connectedness of $S$, since the barycentric expression for $s$ has discrete values, it only changes according to the action of $\pi_1(\textrm{ad}G)$ as we vary over the surface. Hence, setting $\zeta=e^{\frac{2\pi i}{m}}$, the action of $s$ in any fiber yields a $\mathbb{Z}_m$-grading of $\g$ as $\g=\oplus_{i=0}^{m-1} \g_i,$ where $\g_i$ is the $\zeta^i$-eigenspace (this is the grading from Proposition \ref{prop: coxeter properties}). Let $G_0\subset G$ be the connected Lie group obtained via exponentiating $\g_0\subset \g$, which is always reductive \cite[Chapter 3, Proposition 3.6]{OV}. Inspired by Vinberg's work \cite{Vin}, a pair $(G_0,\g_i)$ is typically called a Vinberg $\theta$-pair \cite{GP}. In turn, we have a $\mathbb{Z}_m$-grading of $\textrm{Aut}P$ into $\zeta^i$-eigenbundles, $$\textrm{ad}P=\oplus_{j=0}^{m-1} \mathcal{C}_j.$$
The Lie algebra subbundle $\mathcal{C}_0$ then determines a reduction of structure group of $P$ from $G$ to $G_0,$ and each $\mathcal{C}_j$ is an associated bundle of this $G_0$-bundle, modelled on $\mathfrak{g}_j$.
\begin{defn}\label{def: cyclic}
    We say that $(P,\phi)$ is a cyclic $G$-Higgs bundle if there exists a holomorphic gauge transformation $s$ of finite order $m$ such that $s^*\phi = \zeta \phi.$
\end{defn}
That is, in the notation above, $\phi \in \mathcal{C}_1$.
Following an observation of Vinberg in \cite{Vin}, if $X \in \g_k$, then we can define a new grading modulo $m'=\frac{m}{\textrm{gcd}(m,k)}$ by setting $\g_j'=\g_{jk},$ for which we then have $X\in \g_1'.$ Hence, if $\phi\in \mathcal{C}_j$, we can always assume up to modifications that $\phi\in\mathcal{C}_1,$ so the definition above encompasses $\phi\in \mathcal{C}_j.$
 In this paper we make the following definition.
\begin{defn}
    A cyclic $G$-Higgs bundle $(P,\phi)$ is Coxeter cyclic if the gauge transformation $s$ has order equal to the Coxeter number.
\end{defn}
    Compare with other definitions of cyclic $G$-Higgs bundles \cite{Baraglia2010CyclicHB}, \cite{Dai2018},  \cite{GP}, \cite{GPRi}, \cite{L2}, \cite{Katz}. In most references, cyclic Higgs bundles are simple and stable cyclic $G$-Higgs bundles on a closed Riemann surface (and the order $m$ is usually the Coxeter number). On a closed surface we have the moduli space structure $\mathcal{M}_S(G)$, and hence such $G$-Higgs bundles are characterized as fixed points for the $\mathbb{Z}_m$-action on  $\mathcal{M}_S(G)$ given by multiplying the Higgs field by $m^{th}$ roots of unity. 
    
    A more general definition is given in \cite[\S 3.2]{GP} (see also \cite{GPRi}). Given a general $\theta\in \textrm{Aut}(\g)$ and a principal $G$-bundle $P,$ one gets another principal $G$-bundle $\theta(P),$ defined by the same total space as $P$, but equipped with the $G$-action $p\cdot g=p\theta(g)$. As well, $\theta$ transports any Higgs field $\phi$ on $P$ to a Higgs field $\theta(\phi)$ on $\theta(P).$ A $G$-Higgs bundle $(P,\phi)$ is $\theta$-cyclic if there exists a $G$-Higgs bundle isomorphism between $(P,\phi)$ and $(\theta(P),\zeta\theta(\phi)).$ For $\theta$ inner, $P$ is isomorphic to $\theta(P),$ and this is equivalent to a cyclic $G$-Higgs bundle in our sense. For general $\theta$, simple and stable $\theta$-cyclic $G$-Higgs bundles on closed surfaces can be interpreted as fixed points of other $\mathbb{Z}_m$-actions on the moduli space (see \cite{GPRi}).

\begin{remark}\label{rem: theta cyclic version}
   There should be versions of Proposition \ref{prop: structure} and Theorem A for $\theta$-cyclic $G$-Higgs bundles, but we don't address this here. Associated with every $\theta\in\mathrm{Out}(\g)$ is a root system $\Delta^\theta$, called the real $\theta$-root system, which comes with its extended system of real $\theta$-roots $\mathcal{Z}^\theta$ and its own Coxeter number $r^\theta$ (see \cite[Chapter 3, \S 3.9]{OV}). There is an analogue of Theorem \ref{thm: 3.3.11}, namely \cite[Chapter 3, Theorem 3.16]{OV}, which describes any automorphism of $\g$ in the class of $\theta\in\textrm{Out}(\g)$ in terms of barycentric coordinates on a semisimple abelian subalgebra $\mathfrak{h}^\theta\subset \g$, indexed by $\mathcal{Z}^\theta$. Presumably these notions can be used to study Hitchin's equations, as we'll do below in the case that $\theta$ is trivial (equivalently, inner). We'll see in Remark \ref{rem: harmonic lift} that when $S$ is closed, the $\theta$ trivial case corresponds to $\tau$-primitive harmonic maps for $\tau$ inner. We expect that when $\theta$ is non-trivial, one should get more general $\tau$-primitive harmonic maps (studied in \cite{Mcintosh}).
\end{remark}
We now prove one side of the equivalence of Proposition \ref{prop: structure} (without the adjoint type assumption). The other side can also be proven without any further preparations, but we defer the proof to later on, since the notation in the proof will be helpful in the proof of Theorem A.
\begin{prop}\label{prop: one side structure}
    A Coxeter cyclic $G$-Higgs bundle $(P,\phi)$ determines a collection of line subbundles $\{L_\alpha : \alpha\in\mathcal{Z}\}$, holomorphic sections $\phi_\alpha \in H^0(S,L_\alpha\otimes \mathcal{K})$, and an isomorphism from $\otimes_{\alpha\in\mathcal{Z}}L_\alpha^{n_\alpha}$ to $\mathcal{O}$.
\end{prop}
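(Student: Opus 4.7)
The plan is to leverage the Coxeter cyclic structure to reduce the structure group of $P$ to the maximal torus $H \subset G$ (which equals $G_0$ by Proposition \ref{prop: coxeter properties}(1), since $\g_0 = \mathfrak{h}$), and then to read off the line bundles and sections from the root space decomposition.

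First, I will view the holomorphic order-$r$ gauge transformation $s$ as a holomorphic section of the associated conjugation bundle $P \times_G G$. Since all Coxeter automorphisms of $\g$ are conjugate by Proposition \ref{prop: coxeter properties}, at every point of $S$ this section takes values in a single conjugation orbit $\mathcal{O}_{\mathrm{cox}} \subset G$. The element $x \in \mathfrak{h}$ in Theorem \ref{thm: 3.3.11} that defines a Coxeter element has all barycentric coordinates equal to $1/r$; in particular, $x$ is a strictly positive element of the fundamental Weyl chamber, so its Weyl stabilizer is trivial, and the centralizer of any Coxeter element in $G$ equals $H$. Hence $\mathcal{O}_{\mathrm{cox}} \cong G/H$, and the section $s$ is equivalent to a holomorphic section of $P \times_G (G/H) = P/H$, that is, to a holomorphic reduction of the structure group of $P$ to a principal $H$-bundle $P_H$ with $P \cong P_H \times_H G$.

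Next, the root space decomposition $\g = \mathfrak{h} \oplus \bigoplus_{\alpha \in \Delta}\g_\alpha$ is $H$-equivariant, with $H$ acting on the root line $\g_\alpha$ via the character $\alpha : H \to \C^*$. Passing to associated bundles gives
\begin{equation*}
    \textrm{ad}P \cong (P_H \times_H \mathfrak{h}) \oplus \bigoplus_{\alpha \in \Delta} L_\alpha, \qquad L_\alpha := P_H \times_\alpha \C.
\end{equation*}
By Proposition \ref{prop: coxeter properties}(2), the $\zeta$-eigenbundle of $s$ acting on $\textrm{ad}P$ is $\mathcal{C}_1 = \bigoplus_{\alpha \in \mathcal{Z}} L_\alpha$. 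Since the Higgs field lies in $H^0(S,\mathcal{C}_1 \otimes \mathcal{K})$, it decomposes uniquely as $\phi = \sum_{\alpha \in \mathcal{Z}} \phi_\alpha$ with $\phi_\alpha \in H^0(S,L_\alpha \otimes \mathcal{K})$, as required.

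Finally, the line bundle $\bigotimes_{\alpha \in \mathcal{Z}} L_\alpha^{n_\alpha}$ is associated with the character $\sum_{\alpha \in \mathcal{Z}} n_\alpha \alpha$ of $H$. Since $n_{-\delta}=1$ and $\delta = \sum_{\alpha \in \Pi} n_\alpha \alpha$ by definition of the highest root, this character vanishes identically on $\mathfrak{h}$, hence is trivial; the associated bundle is therefore canonically trivial and we obtain the desired isomorphism $\Theta$. The main obstacle I anticipate is the first step: cleanly extracting the holomorphic $H$-reduction from $s$, and in particular confirming that the centralizer of a Coxeter element is exactly $H$ (and not a larger subgroup such as $N_G(H)$), so that $s$ really descends to a section of $P/H$. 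Once this reduction is secured, everything else is standard calculus of associated bundles.
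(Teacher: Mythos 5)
Your argument follows the same route as the paper's: reduce the structure group of $P$ to the Cartan subgroup $H$, decompose $\mathcal{C}_1$ into root lines indexed by $\mathcal{Z}$ via Proposition \ref{prop: coxeter properties}, and observe that the character $\sum_{\alpha\in\mathcal{Z}}n_\alpha\alpha=-\delta+\sum_{\alpha\in\Pi}n_\alpha\alpha=0$ trivializes $\otimes_{\alpha\in\mathcal{Z}}L_\alpha^{n_\alpha}$. The last two steps are exactly the paper's. The only substantive difference is how the $H$-reduction is produced: the paper takes it from the fixed subbundle $\mathcal{C}_0$ (as set up in the paragraph before Definition \ref{def: cyclic}), while you take it from the section of $P\times_{\mathrm{Ad}}G$ defined by $s$, valued in the Coxeter conjugacy class $G/Z_G(g)$.

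You correctly isolated the crux, but the justification you offer for it does not close it. That $x$ lies in the interior of the fundamental Weyl chamber gives $\mathrm{Stab}_W(x)=1$, hence only that the \emph{identity component} of $Z_G(\exp(2\pi i x))$ is $H$; it says nothing about the component group. An element $w\in W$ centralizes $\exp(2\pi i x)$ whenever $w(x)-x$ lies in the cocharacter lattice of $H$, and for the alcove barycenter (all barycentric coordinates equal to $1/r$) this holds for every alcove symmetry coming from $\pi_1(\textrm{ad}G)$. Concretely, in $\textrm{PSL}(2,\C)$ the image of $\mathrm{diag}(i,-i)$ is centralized by the Weyl element, so $Z_G(g)=N_G(H)\neq H$; for adjoint $G$ the component group of $Z_G(g)$ is $Z(\tilde{G})$, nontrivial outside types $\mathrm{E}_8,\mathrm{F}_4,\mathrm{G}_2$. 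Your construction therefore only yields, a priori, a reduction to $Z_G(g)$, whose disconnected part acts on $\g_1$ by permuting the lines $\g_\alpha$, $\alpha\in\mathcal{Z}$, which is precisely what could obstruct a global splitting of $\mathcal{C}_1$ into line bundles labelled by $\mathcal{Z}$. In fairness, the paper's proof makes the equivalent assertion (that $\mathcal{C}_0$ reduces the structure group to the \emph{connected} group $G_0=H$) without further argument, so your write-up is on par with, and more explicit than, the source on exactly this point; but as written the claim ``the centralizer of a Coxeter element equals $H$'' is unproved and, taken literally, false in general, so the step needs an additional argument (or an additional hypothesis ruling out the monodromy in $Z_G(g)/H$).
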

 In the notation of the paragraph before Definition \ref{def: cyclic}, for a general cyclic $G$-Higgs bundle, $\mathcal{C}_1$ decomposes into $\mathcal{C}_0$-adjoint-invariant subbundles according to the decomposition of $\g_1$ into irreducible representations of $\g_0$, $\mathcal{C}_1 = \oplus_{i} \mathcal{C}_1^i$. Proposition \ref{prop: one side structure} clarifies this decomposition in the Coxeter case.
\begin{proof}
 For $(P,\phi)$ a Coxeter cyclic $G$-Higgs bundle, the subgroup $G_0$ is a Cartan subgroup $H$ of $G.$ Thus, fixing a positive root set for the Lie algebra $\mathfrak{h}$, which then yields an extended simple root set $\mathcal{Z}$, Proposition \ref{prop: coxeter properties} implies that the $\textrm{ad}|_H$-decomposition of $\mathcal{C}_1$ is of the form $\mathcal{C}_1=\oplus_{\alpha\in\mathcal{Z}}L_\alpha,$ for some holomorphic line bundles $L_\alpha.$ Moreover, we can write $\phi\in \oplus_{\alpha\in\mathcal{Z}}L_\alpha$ and decompose accordingly. To see that $\otimes_{\alpha\in\mathcal{Z}}L_\alpha^{n_\alpha}$ is trivial, note that the action of $H$ on $\mathfrak{g}_\alpha$ is by $\exp(\xi)\cdot e_\alpha=\exp(\alpha(\xi))e_\alpha$, for $\xi\in\mathfrak{h}$, so the induced action of $H$ on $\otimes_{\alpha\in\mathcal{Z}}\mathfrak{g}_\alpha^{\otimes n_\alpha}$ is trivial. Thus, any local $H$-trivialization identifies $\otimes_{\alpha\in\mathcal{Z}}L_\alpha^{\otimes n_\alpha}$ with the $H$-trivial line bundle.  
\end{proof}
\begin{remark}\label{rem: no phi_i zero}
It will be useful to know for the proof of Theorem B that if $\phi$ is stable and simple and not a fixed point of the entire $\C^*$-action on the moduli space of $G$-Higgs bundles, then no $\phi_\alpha$ is identically zero. This follows from \cite[Theorem 4.2.2]{C}.

Satisfying results around polystability have recently been established. A cyclic $G$-Higgs bundle can be interpreted as a $(G_0,\g_1)$-Higgs pair (see \cite[\S 5]{GPRi}). Higgs pairs have their own notions of stability and polystability, and a cyclic $G$-Higgs bundle is polystable if and only if the Higgs pair is polystable (see \cite[Proposition 5.7]{GPRi}). In \cite[\S 5]{Mcintosh}, McIntosh shows that if no $\phi_\alpha$ is zero, then the Higgs pair is stable. For $\C^*$-fixed points with $\phi_{-\delta}=0$, in \cite[Proposition 5.4]{Mcintosh}, McIntosh gives a precise characterization for polystability of the Higgs pair in terms of the degrees of the $L_\alpha$'s (which we state in \S \ref{subsec: t to zero}). Polystability for general $\C^*$-fixed points can be determined following the discussion in \cite[\S 4.1]{Mcintosh} (see also \cite[Theorem 4.2.2]{C}).
\end{remark}

\subsection{Hitchin's equations for cyclic $G$-Higgs bundles}\label{sec: equation}
We now prove Theorem A. A number of the results in this subsection hold for more general cyclic $G$-Higgs bundles, with Cartan subgroups replaced with $G_0$'s. We focus on the Coxeter cyclic case to keep things simpler, and because we're mainly concerned with Theorem A. 

\subsubsection{Principal bundles}\label{subsubsec:hitchin-equation-start}  Over the course of the proof of Theorem A, it will be convenient to choose good trivializations for $\textrm{ad} P$, as well as explicit realizations of reductions of the structure group. This will be achieved through Lemma \ref{lm:principal-nonsense} below. 

  Let $P$ be a holomorphic principal $G$-bundle over a complex manifold $X$, and denote by $\mathrm{Isom}(\textrm{ad} P,\mathfrak{g})$ the bundle whose fiber over $x\in X$ is the set of isomorphisms $(\textrm{ad} P)_x\to\mathfrak{g}$. This admits a right $\mathrm{GL}(\mathfrak{g})$ action with quotient $X$, i.e., it is a principal $\mathrm{GL}(\mathfrak{g})$-bundle. 
        \begin{lem}\label{lm:principal-nonsense}
          Assume that $G$ is of adjoint type. Then there is an embedding 
            \begin{align*}
                \Psi_P\colon P\xhookrightarrow{} \mathrm{Isom}(\textrm{ad} P, \mathfrak{g})
           \end{align*}
            that is equivariant with respect to the (faithful) adjoint representation $G\xhookrightarrow{}\mathrm{GL}(\mathfrak{g})$.
        \end{lem}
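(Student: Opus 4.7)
First I would construct $\Psi_P$ directly from the description $\textrm{ad}P = P\times_G \mathfrak{g}$. For any point $p\in P$ lying over $x\in X$, the associated-bundle construction furnishes a canonical linear isomorphism
\[
\iota_p\colon \mathfrak{g}\to (\textrm{ad}P)_x,\qquad v\mapsto [p,v],
\]
and I would set $\Psi_P(p) := \iota_p^{-1}\in\mathrm{Isom}((\textrm{ad}P)_x, \mathfrak{g})$. Holomorphy of $\Psi_P$ and the fact that it covers the identity on $X$ are immediate from the construction, and no hypothesis on $G$ is used so far.

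Next I would verify the equivariance. Unwrapping the equivalence relation $(p\cdot g, v)\sim (p, \mathrm{Ad}(g)v)$ on $P\times\mathfrak{g}$ yields
\[
\iota_{p\cdot g}(v) \;=\; [p\cdot g, v] \;=\; [p, \mathrm{Ad}(g)v] \;=\; \iota_p\bigl(\mathrm{Ad}(g)v\bigr),
\]
and so $\Psi_P(p\cdot g) = \mathrm{Ad}(g)^{-1}\circ\Psi_P(p)$. Taking the right $\mathrm{GL}(\mathfrak{g})$-action on $\mathrm{Isom}(\textrm{ad}P,\mathfrak{g})$ to be $\psi\cdot A := A^{-1}\circ\psi$—which realises $\mathrm{Isom}(\textrm{ad}P,\mathfrak{g})$ as a principal $\mathrm{GL}(\mathfrak{g})$-bundle over $X$—this reads $\Psi_P(p\cdot g) = \Psi_P(p)\cdot \mathrm{Ad}(g)$, which is exactly the equivariance along $\mathrm{Ad}\colon G\hookrightarrow \mathrm{GL}(\mathfrak{g})$ asserted in the lemma.

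Finally I would show that $\Psi_P$ is a holomorphic embedding. Since it preserves base points and both bundles are locally trivial over $X$, it suffices to prove that $\Psi_P\colon P_x \to \mathrm{Isom}((\textrm{ad}P)_x, \mathfrak{g})$ is an injective immersion on each fiber. If $\Psi_P(p_1) = \Psi_P(p_2)$ and $p_2 = p_1\cdot g$ for the unique $g\in G$, the equivariance forces $\mathrm{Ad}(g) = \mathrm{id}$, and the adjoint-type hypothesis on $G$ then gives $g = e$, hence $p_1 = p_2$. The same faithfulness implies that $\mathrm{Ad}\colon G\to \mathrm{GL}(\mathfrak{g})$ is a closed immersion of Lie groups, so $\Psi_P$ is an immersion on fibers, and therefore an embedding globally.

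The main obstacle here is purely notational: one must pin down the correct right $\mathrm{GL}(\mathfrak{g})$-action on $\mathrm{Isom}(\textrm{ad}P,\mathfrak{g})$ so that the equivariance statement has the claimed form, and one must invoke the adjoint-type hypothesis at exactly one step, to convert the kernel-triviality of $\mathrm{Ad}$ into injectivity of $\Psi_P$. No deeper ingredient is required.
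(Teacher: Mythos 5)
Your construction is exactly the paper's: the map $p\mapsto\bigl([(pg,\xi)]\mapsto \mathrm{Ad}(g)\xi\bigr)$ given in the paper is precisely your $\iota_p^{-1}$, and the paper simply writes down this formula without spelling out the equivariance or the injectivity. Your additional verifications (the equivariance computation and the use of the adjoint-type hypothesis to get injectivity on fibers) are correct and fill in what the paper leaves implicit.
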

        Suppose now that we are working over a local patch $U\subset X$ such that $P|_U$ is trivial. Then $P$ admits a holomorphic section $\xi:U\to P$. In particular, $\Psi_P\circ\xi$ defines a holomorphic section of $\mathrm{Isom}(\textrm{ad} P, \mathfrak{g})$ that can be identified with a trivialization $(\textrm{ad} P)|_U\to U\times\mathfrak{g}$. We refer to such a trivialization as a $P$-trivialization.
        \par In the coming proof of Theorem A we will often work in $P$-trivializations. We will also typically construct reductions of the structure group $Q\subset P$ by specifying the set $\Psi_P(Q)\subset\Psi_P(P)\subset\mathrm{Isom}(\textrm{ad} P, \mathfrak{g})$, and then work in $Q$-trivializations of $\textrm{ad} P$. This is equivalent to working in trivializations from $\Psi_P(Q)$. Occasionally, $Q$ will have a structure group that is merely a real torus (rather than a complex Lie group), and in this case, the analogue of Lemma \ref{lm:principal-nonsense} still holds, with the caveat that the embedding $\Psi_P$ is not holomorphic, and hence neither are the $Q$-trivializations.
        \par Finally, we include the construction of $\Psi_P$ from Lemma \ref{lm:principal-nonsense} for completeness. 
        \begin{proof}[Proof of Lemma \ref{lm:principal-nonsense}]
            Identify $\textrm{ad} P$ as the quotient of $P\times\mathfrak{g}$ by the $G$ action given by $g\cdot (p, \xi)=(pg^{-1}, g\xi)$. The desired embedding is given by 
            \begin{align*}
                \Psi_P\colon\;\;\;P&\longrightarrow \mathrm{Isom}(\textrm{ad}P, \mathfrak{g}), \\
                p &\longrightarrow \left([(pg, \xi)]\to g\xi g^{-1}\right).
            \end{align*}
        \end{proof}


    \subsubsection{From Higgs bundles to (\ref{eq: firstthm})}\label{subsubsec:hitchin-equations-middle}
    Here we prove the first part of the equivalence of Theorem A. Let $(P, \phi)$ be a stable cyclic $G$-Higgs bundle on a closed Riemann surface $S$ with holomorphic gauge transformation $s:P\to P$ of order $m$ such that $s^*\phi=\zeta\phi$, where $\zeta$ is an $m^{th}$ root of unity (here, $G$ is not yet assumed to be of adjoint type). The gauge transformation $s$ induces an automorphism of $\textrm{ad} P$, for which $\phi$ lies in the $\zeta$-eigenspace.  Let $\rho:\textrm{ad} P\to\textrm{ad} P$ be the anti-linear involution that solves the Hitchin equation, giving rise to Hermitian metric $h$ with Chern connection $\nabla_h.$ First we prove the following. 
\begin{prop}\label{nablapreserves}
   $s^*\rho = \rho$, and $s^*\nabla_h=\nabla_h$.
\end{prop}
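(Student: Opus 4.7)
The strategy is the standard uniqueness-plus-symmetry trick: since $(P,\phi)$ is stable, its Hitchin solution is unique, so to prove $s^{*}\rho = \rho$ it suffices to verify that $s^{*}\rho$ is again a solution to Hitchin's equation for $(P,\phi)$.

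First I would check that $s^{*}\rho$ is the right sort of object, i.e., an anti-linear involution of $\textrm{ad}P$ restricting fiberwise to a Cartan involution. The gauge transformation $s$ acts on $\textrm{ad}P$ fiberwise by inner automorphisms of $\mathfrak{g}$, and inner automorphisms are $\C$-linear, preserve the Killing form, and conjugate Cartan involutions to Cartan involutions. Hence $s^{*}\rho$ defines another reduction of structure group to a maximal compact subgroup, with associated Hermitian metric $s^{*}h$ and Chern connection $\nabla_{s^{*}h}$; and because $s$ is holomorphic, $\nabla_{s^{*}h} = s^{*}\nabla_{h}$.

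Next I would apply $s$ to the Hitchin equation $F(\nabla_{h}) + [\phi,\phi^{*_{h}}] = 0$. Pulling everything back gives
\begin{equation*}
    F(\nabla_{s^{*}h}) + [s^{*}\phi,\, (s^{*}\phi)^{*_{s^{*}h}}] = 0.
\end{equation*}
Using $s^{*}\phi = \zeta\phi$ with $|\zeta|=1$, and the fact that $(s^{*}\phi)^{*_{s^{*}h}} = \bar{\zeta}\,\phi^{*_{s^{*}h}}$ because the adjoint operation is conjugate-linear in the scalar, the commutator picks up a factor of $|\zeta|^{2} = 1$, so the equation reduces to
\begin{equation*}
    F(\nabla_{s^{*}h}) + [\phi,\phi^{*_{s^{*}h}}] = 0.
\end{equation*}
Thus $s^{*}h$ is also a Hitchin solution for $(P,\phi)$. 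This is essentially Remark \ref{rem: S^1 invariance} applied along the orbit of the $S^{1}$-action produced by $s$.

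Finally, stability of $(P,\phi)$ (together with the uniqueness statement in the theorem attributed to Simpson and Biquard--Garc\'ia-Prada--Mundet i Riera) forces $s^{*}h = h$, equivalently $s^{*}\rho = \rho$. Since $\nabla_{h}$ is the unique connection compatible with both the holomorphic structure of $P$ and the metric $h$, and $s$ preserves both, we conclude $s^{*}\nabla_{h} = \nabla_{h}$. I do not expect any real obstacle: the only point to be careful about is the $|\zeta|^{2}=1$ cancellation in the commutator term, which is exactly what makes uniqueness applicable.
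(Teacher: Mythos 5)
Your proposal is correct and follows essentially the same route as the paper: pull back the solution by $s$, use the unit-modulus cancellation $|\zeta|^2=1$ (i.e., Remark \ref{rem: S^1 invariance}) to see that $s^*\rho$ solves Hitchin's equations for $(P,\phi)$ itself, and conclude $s^*\rho=\rho$ and $s^*\nabla_h=\nabla_h$ by uniqueness from stability. The paper's proof is a condensed version of exactly this argument.
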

The above is due to Baraglia in the case of the Hitchin section (in the proof of \cite[Proposition 4.1]{Baraglia2010CyclicHB}), and the general case of $s^*\nabla_h=\nabla_h$ is in the proof of \cite[Theorem 4.2.2]{C}. 
\begin{proof}
The anti-linear involution $s^*\rho$ solves the Hitchin equations for $(P,s^*\phi)=(P,\zeta\phi).$ Thus, from Remark \ref{rem: S^1 invariance}, $s^*\rho$ also solves the Hitchin equations for $(P,\phi).$ Since $(P,\phi)$ is stable, the uniqueness theory for Hitchin's equations on closed surfaces applies, and implies that $s^*\rho=\rho$. Moreover, the Chern connection of $s^*\rho$ is $s^*\nabla_h,$ and hence $s^*\nabla_h =\nabla_h.$ 
\end{proof}
Note that $s^*\nabla_h =\nabla_h$ implies that $\nabla_h$ preserves the eigenbundle splitting $\textrm{ad}P=\oplus_{j=0}^{m-1}\mathcal{C}_j$. 
\begin{remark}\label{rem: harmonic lift}
    Since the structure group reduces to $G_0$ and $s^*\rho=\rho,$ the harmonic map lifts to the homogeneous space $G/(K\cap G_0)$. Since $G/(K\cap G_0)$ is reductive, it is pseudo-Riemannian. Collier proves that it is a naturally reductive homogeneous space (see \cite[Definition 4.3.1, Lemma 4.3.3]{C}), which implies this lift is in fact harmonic (see \cite{Wo}). In the Coxeter cyclic case, this harmonic lift is a $\tau$-primitive harmonic map as considered in \cite{Mcintosh}.
\end{remark}


        \par From this point on, we assume that the group $G$ is of adjoint type and that $(P,\phi)$ is Coxeter cyclic, so that $m$ is the Coxeter number $r$. We now think of $P$ as the reduction of the principal bundle $\mathrm{Isom}(\textrm{ad} P, \mathfrak{g})$ as in Lemma \ref{lm:principal-nonsense} and its surrounding paragraphs. Let $g$ be a Coxeter element of $G$, and let $T$ be a maximal torus containing $g$. Let $H$ be the Cartan subgroup that is the complexification of $T$, and let $K$ be the maximal compact subgroup of $G$ that contains $T$. Let $\hat{\rho}:\mathfrak{g}\to\mathfrak{g}$ be the anti-linear involution with fixed point set $\mathfrak{k}$, the Lie algebra of $K$. Let the Lie algebras of $T$ and $H$ be $\mathfrak{t}$ and $\mathfrak{h}$ respectively. We can define $\hat{\rho}$ exactly as in \S 3.1. Note that these choices are also found in Baraglia's paper \cite{Baraglia2010CyclicHB}. By standard Lie theory, any such choice is conjugate to that of Baraglia (for a justification, one can see \cite[Proposition 3.1.2]{L2}). 
        \par We define $P_H\subset P$ to be the $\Psi_P$ pre-image of the set of linear isomorphisms $A:(\textrm{ad} P)_x\to\mathfrak{g}$ in $\mathrm{Isom}(\textrm{ad} P, \mathfrak{g})_x$, for $x\in S$, such that 
        \begin{align*}
            A(s\xi)=\mathrm{Ad}_g A(\xi).
        \end{align*} 
        Let $P_K\subset P$ be the $\Psi_P$ pre-image of the set of isomorphisms $A$ with 
        \begin{align*}
            A({\rho}\xi)=\hat{\rho}(A(\xi)).
        \end{align*}
        Note that $P_K, P_H$ are preserved by the right actions of $K, H$, respectively, and moreover $P_K/K=S$, $P_H/H=S$. Indeed, both quotients are non-empty, since the centralizer of $g$ is $H$ and the normalizer of $K$ in $G$ is $K$, and both quotients are equal to $S$ (rather than a cover) because any two Coxeter elements are conjugate and any two maximal compacts are conjugate. Thus $P_K, P_H$ are principal bundles with structure groups $K, H$. Moreover, $P_H$ is a holomorphic principal bundle, since $s$ and $\Psi_P$ are both holomorphic. This reduction $P_H$ is isomorphic to the previously defined reduction $\mathcal{C}_0$, and henceforth we identify the two reductions. Observe that $P_K$ is precisely the reduction of the structure group of $P$ to the maximal compact $K$ that corresponds to $\rho$, and hence the connection $\nabla_h$ is induced from a connection on $P_K$.
        \par Since $s^*\rho=\rho,$ $P_T=P_K\cap P_H$ is non-empty and defines a principal $T$-bundle. We now study how the metric and connection, $h$ and $\nabla_h$, interact with the reduction $P_T$.
        \begin{lem}\label{claim:conn-induced}
        $\nabla_h$ is induced from a connection on $P_T$.
        \end{lem}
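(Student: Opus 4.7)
The strategy is to combine two preservation properties of $\nabla_h$ and intersect the corresponding reductions. The first is definitional: as the Chern connection associated with the reduction $P_K$, the horizontal distribution of $\nabla_h$ is tangent to $P_K$, so that its connection $1$-form pulls back to a $\mathfrak{k}$-valued form on $P_K$. The second is the content of Proposition~\ref{nablapreserves}: since $s^*\nabla_h=\nabla_h$, the induced connection on $\textrm{ad}P$ preserves each eigenbundle $\mathcal{C}_j$, and in particular the Lie algebra subbundle $\mathcal{C}_0$. By Proposition~\ref{prop: coxeter properties}(1) we have $\mathfrak{g}_0=\mathfrak{h}$, so $\mathcal{C}_0$ is a bundle of Cartan subalgebras of $\textrm{ad}P$ and corresponds exactly to the $H$-reduction $P_H$; preservation of $\mathcal{C}_0$ thus translates to $\nabla_h$ being induced from a connection on $P_H$, equivalently, to its connection $1$-form pulling back to an $\mathfrak{h}$-valued form on $P_H$.

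To finish, I would restrict the connection $1$-form to $P_T=P_K\cap P_H$, which is non-empty since $s^*\rho=\rho$ (as already noted). From the two preservation statements, this restriction is simultaneously $\mathfrak{k}$-valued and $\mathfrak{h}$-valued, so it takes values in $\mathfrak{k}\cap\mathfrak{h}=\mathfrak{t}$. This is precisely the condition for $\nabla_h$ to be induced from a connection on the principal $T$-bundle $P_T$.

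I do not anticipate any real obstacle: both preservation statements have already been established, and only one step requires care, namely the equivalence between $\nabla_h$ preserving the Lie algebra subbundle $\mathcal{C}_0\subset\textrm{ad}P$ and $\nabla_h$ preserving the structure-group reduction $P_H\subset P$. This equivalence relies on being in the Coxeter case so that $G_0$ is already the Cartan subgroup $H$ (rather than a larger reductive group), together with the standard identification between subbundles of Cartan subalgebras of $\textrm{ad}P$ and reductions of $P$ to $H$. Once this bookkeeping is in place the conclusion is immediate.
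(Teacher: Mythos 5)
Your proof is correct and follows essentially the same route as the paper: the paper also combines the fact that the connection form is $\mathfrak{k}$-valued (from the reduction to $P_K$) with $s$-invariance to conclude it lands in $\mathfrak{k}\cap\mathfrak{h}=\mathfrak{t}$, working in a local $P_T$-trivialization where the form $B$ must lie in $\mathrm{ad}\{\xi\in\mathfrak{k}:\mathrm{Ad}_g\xi=\xi\}=\mathrm{ad}(\mathfrak{t})$. The only cosmetic difference is that the paper uses the full $\mathrm{Ad}_g$-invariance (centralizer of $g$) rather than your preservation of $\mathcal{C}_0$ (normalizer of $\mathfrak{h}$); both give $\mathfrak{h}$ at the Lie algebra level, so your bookkeeping step goes through.
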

        \begin{proof}
            We work locally over an open set $U\subset S$. Pick a (possibly non-holomorphic) $P_T$-trivialization $\textrm{ad} P|_U\cong U\times\mathfrak{g}$. Let the connection be $d+B$ in these coordinates. Then $B$ is a 1-form taking values in $\mathrm{ad}(\mathfrak{k})\subset\mathrm{End}(\mathfrak{g})$, since $\nabla_h$ is induced from a connection on $P_K$. 
            \par Note that from the fact that $s$ preserves $\nabla_h$, it follows that $g$ preserves $d+B$. Thus $B$ takes values in the set $\mathrm{ad}\{\xi\in\mathfrak{k}:\mathrm{Ad}_g\xi=\xi\}=\mathrm{ad}(\mathfrak{h}\cap\mathfrak{k})=\mathrm{ad}(\mathfrak{t})$. Thus $B$ takes values in $\mathfrak{t}$, and hence $\nabla_h$ is induced from the connection $d+B$ on $P_T|_U$.   
        \end{proof}
   
     Recall from Proposition \ref{prop: one side structure}, that there exist holomorphic line subbundles $L_\alpha\subset \textrm{ad}P$ such that $\phi$ takes values in $\oplus_{\alpha\in\mathcal{Z}}L_\alpha$.  
        \begin{lem}\label{claim:orthogonal-subbundles}
            In the metric induced on $\textrm{ad} P$ by $\rho$, the subbundles $\{L_\alpha:\alpha\in\mathcal{Z}\}$ are mutually orthogonal.
        \end{lem}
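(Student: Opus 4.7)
The plan is to reduce the claim to a standard Lie-theoretic orthogonality of root spaces by working in a local trivialization coming from the bundle $P_T = P_K \cap P_H$ constructed above, which has the property that in such trivializations $\rho$ becomes $\hat{\rho}$, $s$ becomes $\textrm{Ad}_g$, and the $L_\alpha$ become the root spaces $\mathfrak{g}_\alpha$ for $\alpha \in \mathcal{Z}$.

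First, I would cover $S$ by open sets $U$ over which $P_T$ admits a smooth section. Such a section gives a trivialization $\textrm{ad}P|_U \cong U\times \mathfrak{g}$. Because $P_T \subset P_K$, this trivialization intertwines the antilinear involution $\rho$ with $\hat{\rho}$. Because $P_T \subset P_H = \mathcal{C}_0$, it is an $H$-trivialization of the $H$-reduction $\mathcal{C}_0$, and therefore identifies the $H$-equivariant decomposition $\mathcal{C}_1 = \bigoplus_{\alpha \in \mathcal{Z}} L_\alpha$ from Proposition \ref{prop: one side structure} with the root space decomposition $\mathfrak{g}_1 = \bigoplus_{\alpha \in \mathcal{Z}} \mathfrak{g}_\alpha$ from Proposition \ref{prop: coxeter properties}. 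Under this identification, $L_\alpha|_U$ is locally spanned by the Chevalley root vector $e_\alpha$.

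Next, since $h(x,y) = \nu(x,-\rho(y))$ globally, in the above trivialization $h$ takes the form $h(x,y) = \nu(x,-\hat{\rho}(y))$. For distinct $\alpha,\beta \in \mathcal{Z}$, using the explicit formula $\hat{\rho}(e_\beta) = -e_{-\beta}$ from (\ref{def: rho_0}), I compute
\[
h(e_\alpha, e_\beta) = \nu(e_\alpha, -\hat{\rho}(e_\beta)) = \nu(e_\alpha, e_{-\beta}).
\]
The standard fact that $\nu(\mathfrak{g}_\gamma, \mathfrak{g}_\delta)=0$ whenever $\gamma + \delta \neq 0$ then yields $\nu(e_\alpha, e_{-\beta}) = 0$ because $\alpha - \beta \neq 0$. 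This gives $h(L_\alpha, L_\beta) = 0$ on $U$, and since the open sets cover $S$, the orthogonality holds globally.

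There is no real obstacle beyond setting up the trivialization correctly; the key point is that the existence of the reduction $P_T$, guaranteed by Proposition \ref{nablapreserves} (via $s^*\rho = \rho$), is exactly what allows one to simultaneously diagonalize the $s$-action and align $\rho$ with the algebraic Cartan involution $\hat{\rho}$. Once one is in a $P_T$-trivialization, the statement is literally the orthogonality of distinct root spaces under the Hermitian form $\nu(\cdot, -\hat{\rho}(\cdot))$ on $\mathfrak{g}$, which is immediate from the Chevalley basis.
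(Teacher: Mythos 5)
Your proof is correct and follows essentially the same route as the paper: pass to a local $P_T$-trivialization so that $\rho$ becomes $\hat{\rho}$ and $L_\alpha$ becomes $\mathfrak{g}_\alpha$, then invoke $\nu(\mathfrak{g}_\alpha,\mathfrak{g}_{-\beta})=0$ for distinct $\alpha,\beta\in\mathcal{Z}$. The only cosmetic difference is that you compute with the Chevalley vectors $e_\alpha$ explicitly where the paper argues at the level of root spaces.
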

        \begin{proof}
            In a local $P_T$-trivialization, the reduction $\rho$ is represented by $\hat{\rho}$, and each subbundle $L_\alpha$ is represented by $\mathfrak{g}_\alpha$. For $\alpha,\beta$ distinct non-zero roots, the root spaces $\g_\alpha$ and $\g_{-\beta}$ are $\nu$-orthogonal, and hence $\nu(\mathfrak{g}_\alpha,-\hat{\rho}(\mathfrak{g}_\beta))=\nu(\mathfrak{g}_\alpha,\mathfrak{g}_{-\beta})=0$. Since any two choices of $\hat{\rho}, g$ are conjugate, the result follows.  
        \end{proof}
        We are now ready to express the Hitchin equations in terms of $L_\alpha$. Let $\mu_\alpha$ be the metric on $L_\alpha$ that is induced by the $\rho$-metric on $\textrm{ad} P$, and let $\phi_\alpha\in H^0(S,\mathcal{K}\otimes L_\alpha)$ be the component of $\phi$ in the direction $L_\alpha$.
        
        Fix a small open patch $U\subset S$ and a $P_T$-trivialization $(\textrm{ad} P)|_U\cong U\times\mathfrak{g}$. We abuse notation slightly and denote the coordinate representations of $\phi,\phi_\alpha$ in this trivialization by $\phi,\phi_\alpha$, so that, in particular, $\phi_\alpha$ is a 1-form taking values in $\mathfrak{g}_\alpha$. Moreover, the Hitchin metric on $\textrm{ad} P$ is represented in coordinates as $\nu(\cdot, -\hat{\rho}(\cdot))$. Using Proposition \ref{prop: one side structure}, $\otimes_{\alpha\in\mathcal{Z}}L_\alpha^{\otimes n_\alpha}$ is the trivial flat Hermitian bundle. We compute the two terms in Hitchin's equations (\ref{SDeqns}) separately.
        \par First, the \textbf{$[\phi,\phi^*]$ term. } For $\alpha\neq \beta$ non-zero distinct roots in $\mathcal{Z}$, Lemma \ref{lem: commutators} shows that
            \begin{align*}
                [\phi_\alpha, \phi_\beta^{*_h}]=[\phi_\alpha, -\hat{\rho}(\phi_\beta)]\in [\mathfrak{g}_\alpha, \mathfrak{g}_{-\beta}]=0.
            \end{align*}
            The diagonal terms $[\phi_\alpha,\phi_\alpha^{*_h}]$ are 
            \begin{align*}
                [\phi_\alpha, \phi_\alpha^{*_h}]=[\phi_\alpha, -\hat{\rho}(\phi_\alpha)]\in[\mathfrak{g}_\alpha,\mathfrak{g}_{-\alpha}]\subset\mathfrak{h}.
            \end{align*}
            Note that by basic properties of the Chevalley basis \cite[\S 8.3, Proposition (c), (g)]{humphreys1972introduction}, we have 
            \begin{align*}
                [e_\alpha, -\hat{\rho}(e_\alpha)]&=[e_\alpha, e_{-\alpha}]=h_\alpha=\nu(e_\alpha, e_{-\alpha})\frac{\nu(\alpha, \alpha)}{2} h_\alpha\\
                &=\nu(e_\alpha, -\hat{\rho}(e_\alpha)) \frac{\nu(\alpha, \alpha)}{2}h_\alpha.
            \end{align*}
            In the local $P_T$-trivialization, we can write  $\phi_\alpha = e_\alpha\otimes f_\alpha,$ where $f_\alpha$ is a holomorphic $1$-form. We find that $$[\phi_\alpha,\phi_\alpha^{*_h}]=f_\alpha\wedge \overline{f_\alpha}\otimes [e_\alpha,-\hat{\rho}(e_\alpha)]=f_\alpha\wedge \overline{f_\alpha}\otimes \nu(e_\alpha, -\hat{\rho}(e_\alpha)) \frac{\nu(\alpha, \alpha)}{2}h_\alpha = \mu_\alpha(\phi_\alpha,\phi_\alpha)\frac{\nu(\alpha, \alpha)}{2}h_\alpha.$$ Above, the $\mu_\alpha(\cdot,\cdot)$ is the extension to $L_\alpha$-valued forms, and $\mu_\alpha(\phi_\alpha,\phi_\alpha)=\mu_\alpha(\phi_\alpha\wedge\overline{\phi}_\alpha),$
            which is the map $L_\alpha\otimes \overline{L}_\alpha\to \C$ associated with $\alpha,$ extended to bundle-valued forms and evaluated on $\phi_\alpha\wedge \overline{\phi}_\alpha$. We prefer the notation $\mu_\alpha(\phi_\alpha\wedge\overline{\phi}_\alpha),$ since it makes clear that the object in question is a $2$-form. Putting everything together, we have
            \begin{align}\label{eq: submain-phiphi*}
                [\phi, \phi^{*_h}]&=\sum_{\alpha\in\mathcal{Z}}\mu_\alpha(\phi_\alpha\wedge \bar{\phi}_\alpha) \frac{\nu(\alpha,\alpha)}{2}h_\alpha.
            \end{align}
            In particular, for any root $\alpha,$
            \begin{align}\label{eq: main-phiphi*}
                \alpha([\phi, \phi^{*_h}])=\sum_{\beta\in\mathcal{Z}} \nu(\alpha, \beta)\mu_\beta(\phi_\beta\wedge \bar{\phi}_\beta).
            \end{align}
        
            \par Second, the \textbf{$F(\nabla_h)$ term. } Note that $\nabla_h$ is the Chern connection for the holomorphic bundle $\textrm{ad} P$. Thus $\nabla_h$ is also the Chern connection for the subbundle $\oplus_{\alpha\in\mathcal{Z}}L_\alpha$. Since the Hitchin metric splits into $\oplus_{\alpha\in\mathcal{Z}}\mu_\alpha$ over $\oplus_{\alpha\in\mathcal{Z}}L_\alpha$, by uniqueness of Chern connections, $F(\nabla_h)|_{L_\alpha}$ acts as multiplication by $F(\mu_\alpha)$. 
            \par In our local $P_T$-trivialization, identify $\textrm{ad} P$ with $\mathfrak{g}$ and $L_\alpha$ with $\mathfrak{g}_\alpha$. $F(\nabla_h)$ is a 2-form taking values in $\mathrm{ad}(\mathfrak{g})\subset\mathrm{End}(\mathfrak{g})$. Let $\xi$ be an $\mathfrak{h}$-valued 2-form such that $\alpha(\xi)=F(\mu_\alpha)$. Then $F(\nabla_h)-\mathrm{ad}\xi$ has pointwise kernel containing $\oplus_{\alpha\in\mathcal{Z}}\mathfrak{g}_\alpha$. Recalling that 
            \begin{equation*}
                \{\xi\in\mathfrak{g}:[\xi,\eta]=0\text{ for all }\eta\in\oplus_{\alpha\in\mathcal{Z}}\mathfrak{g}_\alpha\}=0,
            \end{equation*}
            we see that $F(\nabla_h)=\mathrm{ad}(\xi)$. We deduce that for any root $\alpha,$ 
            \begin{equation}\label{eq: main-fnablah}
                \alpha(F(\nabla_h))=F(\mu_\alpha).
            \end{equation}
        
\begin{proof}[Proof of Theorem A, first side]
We fix a trivialization as above. Since the simple roots span $\mathfrak{h}^*,$ $F(\nabla_h)$ and $[\phi,\phi^*]$ are determined by their values on the simple roots (and certainly, as well, by their values on the extended simple roots). Combining (\ref{eq: main-phiphi*}) with (\ref{eq: main-fnablah}), applying such a root $\alpha\in \mathcal{Z}$ gives $$F(\mu_\alpha) + \sum_{\beta\in\mathcal{Z}} \nu(\alpha, \beta)\mu_\beta(\phi_\beta\wedge \bar{\phi}_\beta)=0.$$ The expression above is independent of the trivialization, and hence we have (\ref{eq: firstthm}).
\end{proof}

\subsubsection{From (\ref{eq: firstthm}) to the Higgs bundle}\label{subsubsec:hitchin-equation-end} 
Finally, we prove the other side of Proposition \ref{prop: structure}, and then the other side of Theorem A.

\begin{proof}[Proof of Proposition \ref{prop: structure}, other side]
Suppose that we are given a collection of line bundles $\{L_\alpha:\alpha\in\mathcal{Z}\}$, sections $\phi_\alpha\in H^0(S,L_\alpha\otimes \mathcal{K})$, and a holomorphic isomorphism 
\begin{align*}
    \Theta: \otimes_{\alpha\in\mathcal{Z}}L_\alpha^{\otimes n_\alpha}\to\mathcal{O}.
\end{align*}
Let $\mathrm{Isom}(\mathfrak{g}_\alpha,L_\alpha)$ be the $H$-bundle over $S$ whose fiber over a point $x\in S$ is the set of isomorphisms from $\g_\alpha\to (L_\alpha)_x$.
Define the space $P_H\subset \prod_{\alpha\in\mathcal{Z}}\mathrm{Isom}(\mathfrak{g}_\alpha,L_\alpha)$ as  
\begin{align*}
P_H=\left\{(A_\alpha:\alpha\in\mathcal{Z}): \Theta\left(\prod_{\alpha\in\mathcal{Z}}A_\alpha(\tilde{e}_\alpha)^{n_\alpha}\right)=1\right\} ,
\end{align*}
where $\tilde{e}_\alpha=\frac{e_\alpha}{\sqrt{\nu(e_\alpha,-\hat{\rho}( e_\alpha))}}$.
Note that $P_H$ admits a natural (right) $H$-action defined by $(A_\alpha:\alpha\in\mathcal{Z})\cdot h=(A_\alpha\circ\mathrm{Ad}_h:\alpha\in\mathcal{Z})$, with $P_H/H=S$, and thus $P_H$ defines a principal $H$-bundle over $S$. Since $\Theta$ is holomorphic, so is $P_H$. Observe that $P_H\times_{\mathrm{Ad}}\mathfrak{g}_\alpha$ admits a natural isomorphism with $L_\alpha$ given by 
\begin{equation*}
    [((A_\beta:\beta\in\mathcal{Z}), e_\alpha)]\longrightarrow A_\alpha(e_\alpha).
\end{equation*}
This implies that $\oplus_{\alpha\in\mathcal{Z}}L_\alpha$ admits a natural embedding into $P_H\times_{\mathrm{Ad}}\mathfrak{g}\cong \textrm{ad} P$, where $P$ is the associated fibre bundle to $P_H$ by the left action of $H$ on $G$. Note that such a $P$ naturally admits a structure of a holomorphic principal $G$-bundle. Thus we obtain a $G$-Higgs bundle $(P,\phi)$ where $\phi=\sum_{\alpha\in\mathcal{Z}}\phi_\alpha$ takes values in $\oplus_{\alpha\in\mathcal{Z}}L_\alpha\subset\textrm{ad} P$. We define the transformation $s:P\to P$ by $s([(q, x)]) = [(q, gxg^{-1})]$, which, since $g$ commutes with elements in $H$, determines a well-defined holomorphic gauge transformation of $P$. Since $\mathrm{Ad}_g$ acts on $\mathfrak{g}_\alpha$ by multiplication by $\zeta$, it follows that $s^*\phi=\zeta \phi$. Thus $(P,\phi)$ is a Coxeter cyclic $G$-Higgs bundle. This completes the proof of Proposition \ref{prop: structure}.
\end{proof}

\begin{proof}[Proof of Theorem A, other side]
Suppose now that $\mu_\alpha$ is a family of metrics having (\ref{eq: firstthm}). We define $P_T\subset P_H$ to be the set of $(A_\alpha:\alpha\in\mathcal{Z})$ such that $A_\alpha(e_\alpha)$ has norm $\nu(e_\alpha,-\hat{\rho}(e_\alpha))=\frac{2}{\nu(\alpha,\alpha)}$ with respect to $\mu_\alpha$. Then $P_T$ is a reduction of the structure group of $P_H$ from $H$ to $T$. Note that the bundle $\oplus_{\alpha\in\mathcal{Z}}L_\alpha$ has a metric $h$ given by the orthogonal sum of the $\mu_\alpha$'s, for which the Chern connection $\nabla_h$ is a sum of the Chern connections of the individual $\mu_\alpha$'s. It is easy to see that $\nabla_h$ is induced from a connection on $P_T$, which we denote $\nabla$. 
\par Working in a local $P_T$-trivialization of $\textrm{ad} P=P_T\times_\mathrm{Ad} \mathfrak{g}$, we define $\rho:\textrm{ad} P\to\textrm{ad} P$ to be equal to $\hat{\rho}$, associated to the Cartan subalgebra $\mathfrak{h}\subset\mathfrak{g}$. By arguments analogous to those of the previous section, we see that (\ref{eq: firstthm}) implies
\begin{equation*}
\oplus_{\alpha\in\mathcal{Z}}\mathfrak{g}_\alpha\subset\ker(F(\nabla) + [\phi, \phi^*]).
\end{equation*}
Thus $F(\nabla)+[\phi,\phi^*]=0$.
\end{proof} 

\begin{remark}\label{rem: not closed}
    The only point where we use that $S$ is closed and that $(P,\phi)$ is stable is for the assertion $s^*\rho=\rho$ in Proposition \ref{nablapreserves}. Thus, if we remove the closedness assumption on $S$ and impose no stability condition on $\phi,$ but assume that $s^*\rho=\rho,$ then the conclusion of Theorem A still holds.
\end{remark}

Theorem A was stated and proved for groups of adjoint type. Without this assumption, similar to Proposition \ref{prop: structure}, the forward direction of Theorem A still holds. Indeed, under the adjoint representation, a Coxeter cyclic G-Higgs bundle $(P,\phi)$ produces a Coxeter cyclic $\textrm{ad}G$-Higgs bundle $(P_{\textrm{ad}G},\phi)$. Furthermore, if $s$ makes $(P,\phi)$ Coxeter cyclic, a solution to the self-duality equations for $(P,\phi)$ with involution $\rho$ such that $s^*\rho = \rho$ induces a solution to the corresponding equations for $(P_{\textrm{ad}G},\phi)$ with the same property. Note that the adjoint representation induces an isomorphism from $\textrm{ad}P\to \textrm{ad}P_{\textrm{ad}G}$, so $\phi$ really is the Higgs field on $P_{\textrm{ad}G}$. The line bundles $L_\alpha$ and sections $\phi_\alpha$ associated with $(P,\phi)$ and $(P_{\textrm{ad}G},\phi)$ are equal, and from Theorem A, we get metrics $\mu_\alpha$ on $L_\alpha$ solving (\ref{eq: firstthm}).

\begin{remark}\label{rem: constructing gauge transformations}
    In the proof of Proposition \ref{prop: structure}, we used the Coxeter element $g$ to induce a holomorphic gauge transformation of the $G$-bundle $P$. More generally, keeping in the setting of the proof, out of certain collections of isomorphisms $\g_\alpha \to \g_\alpha$, $\alpha\in \mathcal{Z}$, we can build holomorphic gauge transformations of $P.$ We make use of this construction in Section \ref{subsec: t to zero}. For $\alpha\in \mathcal{Z}$, let $s_\alpha^{t_\alpha}: \g_\alpha\to \g_\alpha$ be multiplication by $t_\alpha \in \C^*$. 
    Each $s_\alpha^{t_\alpha}$ induces an automorphism of $\textrm{Isom} (\g_\alpha, L_\alpha)$ via $\varphi_\alpha \mapsto \varphi_\alpha \circ s_\alpha^{t_\alpha}.$ Taking them all together,
    $\oplus_{\alpha\in Z} s_\alpha^{t_\alpha}$ induces an automorphism of $\prod_{\alpha\in\mathcal{Z}} \textrm{Isom} (\g_\alpha, L_\alpha)$ that commutes with the action of $H$, i.e., a gauge transformation. If $\prod_{\alpha\in \mathcal{Z}} t_\alpha^{n_\alpha} =1,$ then the gauge transformation preserves $P_H$. Extending the structure group to $P$, the gauge transformation comes along for the ride and determines a holomorphic gauge transformation $s$ of $P$ such that $s^*\phi_\alpha$ is $t^\alpha \phi_\alpha.$

   The backward direction of Proposition \ref{prop: structure} and the above construction are specialized to a group $G$ of adjoint type. To understand the general case, let $G'$ be another Lie group, which we assume admits a surjective homomorphism $G'\to G$ with discrete kernel. Assume further that we are given a $G'$-Higgs bundle $(P',\phi)$ that induces $(P,\phi)$ through this homomorphism. Then $s$ lifts to $P'$: any gauge transformation of $(P,\phi)$ is equivalent to a section of $P\times_{\textrm{Ad}} G$, and the section $s$ is equivalent to a constant map to $T$ that's been factored through the inclusions $T\to P_T \times_{\textrm{Ad}} T \to P\times_{\textrm{Ad}} G$. Clearly, we can choose a lift of this constant map to $G$ (lifts are indexed by the kernel of $G'\to G$), which then induces a lift of $s$ to $P'$. Of course, the behaviour on the $\phi_\alpha$'s is the same.
\end{remark}

\subsection{Cyclic $G^{\R}$-Higgs bundles}\label{sec: G^R higgs}
With the proof of Theorem A complete, we digress a bit in order to discuss the extra symmetries that come into play when working with $G^{\R}$-Higgs bundles, where $G^{\R}$ is the split real form of the complex simple Lie group $G$. This is the setting for cases $\textrm{A}_n$ and $\textrm{E}_6$ in Theorem C'. 
The definition for cyclic $G^{\R}$-Higgs bundles is more or less the same as in the case of complex groups; before the definition, we just point out that any gauge transformation $s$ of a principal bundle induces an automorphism of every associated bundle. For $K$ the maximal compact subgroup of $G^{\R},$ with complexification $K^{\C},$ we denote principal $K^{\C}$-bundles by $P_{K^{\C}}.$  
\begin{defn}
   A $G^{\R}$-Higgs bundle $(P_{K^{\C}},\phi)$ is cyclic if there exists a holomorphic gauge transformation $s$ of $P_{K^{\C}}$ of finite order $m$ such that $s^*\phi=e^{\frac{2\pi i}{m}}\phi$. If $m$ is the Coxeter number of $G,$ we say that it is Coxeter cyclic.
\end{defn}

Write the Lie algebras of $G^{\R}$ and $K$ as $\g^{\R}$ and $\mathfrak{k}$ respectively, and let $\mathfrak{p}$ be the Killing orthogonal complement of $\mathfrak{k}$. As usual, $\g$ is the Lie algebra of $G$. It follows from Hitchin's construction in \cite[\S 6]{Hi} that we can find an involution $\sigma_0: G^{\R}\to G^{\R}$ that splits the Lie algebra $\g^{\R}$ as $\g^{\R}=\mathfrak{k}\oplus \mathfrak{p}$ and such that, if we complexify to an involution of $G$ (which we continue to denote by $\sigma_0)$, then, on the Lie algebra level, $\sigma_0$ preserves a Cartan subalgebra $\mathfrak{h}$ of $\g$ and a choice of positive roots $\Delta^+$ (it must then also preserves the simple roots $\Pi$). From \cite{Hi}, $\sigma_0=\textrm{ad}_{g_\pi}\circ w,$ where $g_\pi$ is a group element chosen so that $\textrm{ad}_{g_\pi}$ acts by rotation by $\pi$ in a principal $\mathfrak{sl}(2,\C)$, and $w$ is an automorphism in the component labelled by some $\hat{w}\in \textrm{Aut}(\Pi)$. The element $\hat{w}$ is trivial unless $G$ is of type $\textrm{A}_n, \textrm{D}_{2n+1},$ or $\textrm{E}_6,$ in which case $\hat{w}$ has order $2$. Considering the action on the simple roots, it acts by the element $\hat{w}$. Writing $\sigma_0$ as well for the action on simple roots, it is easily checked that $\sigma_0$ takes the root space $\g_\alpha \subset \g$ to $\g_{\sigma_0(\alpha)}.$ 

The involution $\sigma_0$ extends to $P_{K^{\C}}\times G$ via $\sigma_0\cdot (p,g) = (p,\sigma_0(g))$ and then descends to the extension of structure group $P=P_{K^{\C}}\times_{K^{\C}} G.$ Of course, $\sigma^* \phi = -\phi.$ As well, by descending the operation $s\cdot (p,g) = (s(p),g),$ $s$ induces a gauge transformation of $P$, which we still denote by $s$, which makes the $G$-Higgs bundle $(P,\phi)$ Coxeter cyclic. By construction, the two operations commute, i.e.,  $\sigma^* s = s$. In the language of \cite{GP}, \cite{GPRi}, for these groups, $(P,\phi)$ is also $\theta$-cyclic for some $\theta\in \textrm{Aut}(G)$ of order $2$.

Now, we explain the analogue of Proposition 1.2 for cyclic $G^{\R}$-Higgs bundles. 
We resume the notation from \S \ref{sec: cyclic definition} and \ref{sec: equation}. Relating to our framework, since $\sigma^* s = s$, $\sigma$ necessarily preserves the $s$-eigenbundle splitting of $\textrm{ad}P$, $\textrm{ad}P=\oplus_{j=0}^{m-1} \mathcal{C}_j$. We impose that $(P_{K^{\C}},\phi)$ is Coxeter cyclic. Then, $\mathcal{C}_1$ splits into line bundles, and after identifying our index set for these line bundles with the set of simple roots $\Pi$ preserved by $\sigma_0,$ we can write $\mathcal{C}_1=\oplus_{\alpha\in\mathcal{Z}}L_\alpha$, and we know that $\sigma$ identifies $L_\alpha$ with $L_{\sigma_0(\alpha)}.$

\begin{prop}
     Let $G$ be of adjoint type. A Coxeter cyclic $G^{\R}$-Higgs bundle over a Riemann surface $S$ is equivalent to a collection of line bundles $\{L_\alpha:\alpha\in\mathcal{Z}\}$ with an isomorphism $\Theta: \bigotimes_{\alpha\in\mathcal{Z}} L_\alpha^{\otimes n_\alpha}\to \mathcal{O}$, holomorphic sections $\phi_\alpha\in H^0(S,L_\alpha\otimes \mathcal{K})$, and a collection of isomorphisms $\sigma_\alpha: L_\alpha\to L_{\sigma_0(\alpha)}$ such that $\sigma_\alpha(\phi_\alpha) = -\phi_{\sigma_0(\alpha)}.$ 
\end{prop}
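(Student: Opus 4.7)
The plan is to bootstrap from Proposition \ref{prop: structure} by tracking the action of the involution $\sigma$ on $P$ induced by the Cartan involution $\sigma_0$, and to show that the data of this involution is encoded exactly by the collection $\{\sigma_\alpha\}$. Most of the forward direction is already contained in the paragraphs just before the statement, so I would simply assemble those observations: starting with a Coxeter cyclic $G^{\R}$-Higgs bundle $(P_{K^{\C}},\phi)$ with holomorphic gauge transformation $s$, I would extend the structure group to obtain a Coxeter cyclic $G$-Higgs bundle $(P,\phi)$ and apply Proposition \ref{prop: structure} to produce $\{L_\alpha\}$, $\{\phi_\alpha\}$, and $\Theta$. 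The Cartan involution $\sigma_0$ extends to a holomorphic involution $\sigma$ on $P$, and the commutation $\sigma^* s = s$ (noted just before the proposition) means $\sigma$ preserves the $s$-eigenbundle decomposition of $\textrm{ad} P$. In any $P_T$-trivialization, $\sigma$ acts linearly as $\sigma_0$, which carries $\g_\alpha$ to $\g_{\sigma_0(\alpha)}$ since $\sigma_0$ preserves $\mathfrak{h}$ and the roots; hence $\sigma$ restricts to a bundle isomorphism $\sigma_\alpha \colon L_\alpha \to L_{\sigma_0(\alpha)}$, and the relation $\sigma_\alpha(\phi_\alpha) = -\phi_{\sigma_0(\alpha)}$ follows by decomposing $\sigma^*\phi = -\phi$ (which in turn comes from $\sigma_0|_{\mathfrak{p}^{\C}} = -\mathrm{id}$).

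For the reverse direction, I would first apply the construction from the proof of Proposition \ref{prop: structure} to build a Coxeter cyclic $G$-Higgs bundle $(P,\phi)$, with $P$ the extension to $G$ of the $H$-bundle $P_H \subset \prod_\alpha \mathrm{Isom}(\g_\alpha, L_\alpha)$. I would then define a candidate involution $\tilde{\sigma}\colon P_H \to P_H$ by
\begin{equation*}
\tilde{\sigma}(A_\alpha : \alpha \in \mathcal{Z}) \;=\; \bigl(\,\sigma_{\sigma_0(\alpha)}\circ A_{\sigma_0(\alpha)}\circ \sigma_0|_{\g_\alpha} \;:\; \alpha \in \mathcal{Z}\,\bigr),
\end{equation*}
and extend $\tilde{\sigma}$ to $P = P_H \times_H G$ by acting on the $G$-factor via $\sigma_0$, which is well-defined because $\sigma_0$ preserves $H$. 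The resulting holomorphic involution $\sigma$ has fixed-point set a holomorphic principal $K^{\C}$-bundle $P_{K^{\C}} := P^{\sigma}$, since $\mathrm{Fix}(\sigma_0|_G) = K^{\C}$, and the defining relation on the $\sigma_\alpha$'s assembles into $\sigma^*\phi = -\phi$, placing $\phi$ in $\textrm{ad}\,\mathfrak{p}^{\C}$ as required. Finally, the Coxeter element $g \in H$ used to build $s$ in Proposition \ref{prop: structure} can be chosen in $\mathrm{Fix}(\sigma_0|_H)$ (since $\sigma_0$ preserves $H$ and the Coxeter conjugacy class), so $s$ commutes with $\sigma$ and descends to a gauge transformation of $P_{K^{\C}}$, completing the Coxeter cyclic $G^{\R}$-Higgs bundle structure.

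The main obstacle, I expect, is the bookkeeping needed to confirm that $\tilde{\sigma}$ is a well-defined involution of $P_H$. Well-definedness amounts to verifying $\Theta\bigl(\prod_\alpha \tilde{\sigma}(A)_\alpha(\tilde{e}_\alpha)^{n_\alpha}\bigr) = 1$ whenever the analogous identity holds for $A$, which reduces to a compatibility between the action of $\sigma_0$ on the Chevalley vectors $\tilde{e}_\alpha$ and the isomorphism $\Theta$. Involutivity $\tilde{\sigma}^2 = \mathrm{id}$ boils down to the identity $\sigma_{\sigma_0(\alpha)}\circ \sigma_\alpha = \mathrm{id}_{L_\alpha}$, which follows generically from applying $\sigma_\alpha(\phi_\alpha) = -\phi_{\sigma_0(\alpha)}$ twice, and should be regarded as a tacit compatibility hypothesis otherwise. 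With a careful choice of Chevalley basis in which $\sigma_0$ acts cleanly (using Hitchin's normalization in \cite{Hi}), both verifications become routine.
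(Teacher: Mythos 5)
Your proof is correct and follows essentially the same route as the paper: the forward direction assembles the observations already made before the statement, and your map $\tilde{\sigma}$ on $P_H$ is exactly the paper's $\hat{\sigma}_\alpha\colon \varphi_\alpha\mapsto \sigma_\alpha\circ\varphi_\alpha\circ\sigma_0^{-1}$ (reindexed), extended to $P$ and used to cut out the $K^{\C}$-reduction. The paper is in fact terser than you are --- it does not spell out the compatibility with $\Theta$, the involutivity $\sigma_{\sigma_0(\alpha)}\circ\sigma_\alpha=\id$, or the descent of $s$ to $P_{K^{\C}}$ --- so your flagged ``obstacles'' are reasonable points of care rather than deviations.
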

\begin{proof}
Most of the first direction was explained above. We label the maps from $L_\alpha\to L_{\sigma_0(\alpha)}$ by $\sigma_\alpha,$ and the relation $\sigma^*\phi=-\phi$ translates to $\sigma_\alpha(\phi_\alpha) = - \phi_{\sigma_0(\alpha)}.$ Note that, as in Proposition \ref{prop: one side structure}, this direction does not require $G^{\R}$ to be of adjoint type.

For the converse direction, we step back into the proof of Proposition \ref{prop: structure}. The extra data compared to Proposition \ref{prop: structure} is that of the isomorphisms $\{\sigma_\alpha\}_{\alpha\in \mathcal{Z}}$, so we only need to show that these isomorphisms give a way to reduce the $G$-Higgs bundle $(P,\phi)$ obtained through Proposition \ref{prop: structure} to a $G^{\R}$-Higgs bundle. Toward this, observe that the $\sigma_\alpha$'s define isomorphisms $\hat{\sigma}_\alpha: \textrm{Isom}(\g_\alpha,L_\alpha) \to \textrm{Isom}(\g_{\sigma_0(\alpha)},L_{\sigma_0(\alpha)})$ via $\varphi_\alpha\mapsto \sigma_\alpha\circ \varphi_\alpha \circ \sigma_0^{-1}$. The sum of the $\hat{\sigma}_\alpha$'s then gives a gauge transformation of the bundle $P_H$, which induces a further gauge transformation $\sigma$ on the extension of structure group $P$. It is clear that $\sigma^*\phi=-\phi,$ and moreover that $\sigma$ reduces $(P,\phi)$ to a $G^{\R}$-Higgs bundle. 
\end{proof}
To see how Theorem A is refined for $G^{\R}$-Higgs bundles, first observe that the involution $\rho:\textrm{ad}P\to \textrm{ad}P$ solving Hitchin's equations satisfies: $\sigma^*\rho=\rho.$ The well-known line of reasoning is similar to that of Proposition \ref{nablapreserves}: $\sigma^*\rho$ solves Hitchin's equations for $(P,\sigma^*\phi)=(P,-\phi),$ and we then apply Remark \ref{rem: S^1 invariance}. Recall that $\rho$ induces Hermitian metrics $\mu_\alpha$ on $L_\alpha$, and that the equation from Theorem A concerns these metrics. It follows from $\sigma^*\rho=\rho$ that the restricted map $\sigma|_{L_\alpha}=\sigma_\alpha:L_\alpha\to L_{\sigma_0(\alpha)}$ identifies the Hermitian metric $\mu_\alpha$ on $L_\alpha$ with the metric $\mu_{\sigma_0(\alpha)}$ on $L_{\sigma_0(\alpha)}$. Moreover, $\mu_\alpha(\phi_\alpha\wedge \phi_\alpha) = \mu_{\sigma_0(\alpha)}(\phi_{\sigma_0(\alpha)}\wedge \phi_{\sigma_0(\alpha)}).$ Consequently, when the action of $\sigma_0$ on the Dynkin diagram is non-trivial, Hitchin's equations reduce to a system with fewer variables. We will explain some special cases in detail in  \S \ref{sec: thms C and C'}.
\begin{remark}
    Olive and Turok originally pointed out in \cite{OLIVE1983470} that one can obtain a reduced version of the Toda lattice equations when the Dynkin diagram has extra symmetry. In the Higgs bundle context, Baraglia observed the reduction for Coxeter cyclic Higgs bundles in the Hitchin component \cite[\S 2.2.4]{Baraglia2010CyclicHB}. Similar to (\ref{eq: firstthm}), the general equations that we find here and in \S \ref{sec: thms C and C'} are equivalent to certain equations in \cite{Mcintosh} (see, in particular, \cite[\S 4.2]{Mcintosh}).
\end{remark}

\subsection{The Hitchin section}\label{sec: hitchin section}
Let $G$ be of adjoint type. Let $p_1,\dots, p_l$ be a basis for the algebra of $G$-invariant polynomials on $\g$, ordered so that if $m_i=\deg p_i,$ then $m_i\leq m_{i+1}.$ Applying $p_i$ to a Higgs field returns a holomorphic $m_i$-differential. The Hitchin fibration relative to this basis is $$\mathcal{M}_S(G)\to \oplus_{i=1}^l H^0(S,\mathcal{K}^{m_i}), \hspace{2mm}[(P,\phi)]\mapsto (p_1(\phi),\dots, p_l(\phi)).$$ 
Following Hitchin in \cite{Hi} and Baraglia in \cite{Baraglia2010CyclicHB}, we outline the construction of Hitchin's section $$b: \oplus_{i=1}^l H^0(S,\mathcal{K}^{m_i})\to \mathcal{M}_S(G)$$ and describe the Coxeter cyclic $G$-Higgs bundles in the Hitchin component. By the third description of the Coxeter number in \S \ref{sec: aut}, the Coxeter number is $r=m_l$.

\begin{remark}
    If we use the Hitchin fibration on a $G$-Higgs bundle $(P,\zeta_k\phi)$, we obtain $$(\zeta_k^{m_1}p_1(\phi),\zeta_k^{m_2}p_2(\phi),\dots, \zeta_k^{m_l}p_l(\phi)).$$  It follows that if $(P,\phi)$ is Coxeter cyclic, then $p_1(\phi)=\dots=p_{l-1}(\phi)=0,$ and so the Hitchin fibration returns just  an $r$-differential ($r=m_l$). 
\end{remark}

The $G$-Higgs bundles in Hitchin's section are all made over a common principal $G$-bundle $P$. To define $P$, start by choosing a square root $\mathcal{K}^{1/2}$ of $\mathcal{K}$ and define $P_{2}$ to be the 
frame bundle of $\mathcal{K}^{1/2}\oplus \mathcal{K}^{-1/2},$ which is a principal $\textrm{SL}(2,\C)$-bundle. We obtain a $G$-bundle using a suitable map $i_G: \textrm{SL}(2,\C)\to G$. To construct such a map $i_G$, as in \S \ref{sec: root systems}, let $\mathfrak{h}\subset \mathfrak{g}$ be a Cartan subalgebra and fix a set of positive roots $\Delta^+$ with simple root set $\Pi$ and a Chevalley base $\{h_\beta, e_\alpha, e_{-\alpha}: \beta\in \Pi, \alpha\in \Delta^+\}.$  Let $$x=\frac{1}{2}\sum_{\beta\in\Delta^+}h_\beta.$$ There are $r_\alpha\in\R$ such that $x=\sum_{\alpha\in \Pi} r_\alpha h_\alpha.$  Set $e=\sum_{\alpha\in \Pi}\sqrt{r_\alpha}e_{\alpha}$ and $\tilde{e}=\sum_{\alpha\in \Pi}\sqrt{r_\alpha}e_{-\alpha}.$ The element $e$ is a regular nilpotent, and the subalgebra $\mathfrak{s}\subset \p$ generated by $\{x,e,\tilde{e}\}$ is isomorphic to $\mathfrak{sl}(2,\C)$ and called a principal $\mathfrak{sl}(2,\C).$ Our map $i_G: \textrm{SL}(2,\C)\to G$ is obtained by exponentiation. Equipped with $i_G$, we define the principal $G$-bundle $P$ by $P=P_{2}\times_{i_G} G$.

To see the adjoint bundle, observe that if $y$ is in the root space of $\alpha\in \Delta$, then $[x,y]$ is the height of $\alpha$. The decomposition of $\g$ under the adjoint action of $x$, $$\g=\bigoplus_{m=-M}^M \g_m, \hspace{1mm} \g_m=\{Y\in \g^{\C}: [x,Y]=mY\},$$ is called the height decomposition (note $M=r-1$). We point out that each $\g_m$ decomposes as the sum of the root spaces of height $m$. It is explained in \cite{Hi} that the adjoint bundle of $P$ identifies with $$\bigoplus_{m=-M}^M\g_m\otimes\mathcal{K}^m,$$ with standard holomorphic structure induced from $\mathcal{K}$. The holomorphic class of $P$ and its adjoint bundle are in fact independent of the original choice of $\mathcal{K}^{1/2}$. 

The centralizer of $e$ is spanned by elements $f_1,\dots, f_l,$ which can be taken as highest weight vectors for the irreducible components of the adjoint action of $\mathfrak{s}$ on $\mathfrak{g}.$ These elements are used in defining Hitchin's section.
   
\begin{defn}
    The Hitchin section $b$ assigns a tuple $(q_1,\dots, q_l)$ to the class of the $G$-Higgs bundle $$(P,\phi=\tilde{e}+q_1f_1+\dots +q_lf_l).$$ 
\end{defn}
To interpret $\phi$ as a holomorphic section of $\textrm{ad}P\otimes\mathcal{K},$ we view $\tilde{e}$ as a section of $(\g_{-1}\otimes \mathcal{K}^{-1})\otimes \mathcal{K}$, and each $q_if_i$ as a section of $(\g_{m_i-1}\otimes \mathcal{K}^{m_i-1})\otimes \mathcal{K}.$ It is proved in \cite[\S 5]{Hi} that the $G$-Higgs bundle $(P,\phi)$ is stable. It is also shown in \cite[\S 6-7]{Hi} that $(P,\phi)$ reduces to a $G^{\R}$-Higgs bundle.

Since $b$ is a section, we already know that for every $q=q_l$, $(P,\phi_q:=\tilde{e}+qf_l)$ is Coxeter cyclic (and, as a $G^{\R}$-Higgs bundle); we will produce the explicit gauge transformation. Over a pair of contractible and trivialized open subsets of $\mathcal{K}$, $U_\alpha\times \mathcal{K}$ and $U_\beta\times \mathcal{K}$, let $e^{f_{\alpha\beta}}$ be the transition function for $\mathcal{K}$. The transition functions for $\textrm{ad}P$ are then $\textrm{ad}_{\textrm{exp}(f_{\alpha\beta} x)}$, which preserve the Coxeter automorphism of $\g$ determined by $g=\textrm{exp}(\frac{2\pi i x}{r})$. Thus, $g$ defines a global gauge transformation, and the Higgs bundle $(P,\phi_q)$ is cyclic via $g,$ as can be observed by the formula $[x,\phi_q]=-\tilde{e}+(r-1)qf_l$. Going through the construction of the reduction to $K^{\C}$ from \cite{Hi} associated with the $G^{\R}$-Higgs bundle structure on $(P,\phi)$, one can check directly that $g$ induces a gauge tranformation on the reduced $K^{\C}$-bundle.

Split $\g_{-1}$ into root spaces as $\g_{-1}=\oplus_{\alpha\in\mathcal{Z}} \g_{-\alpha}$. Interpreting  $\tilde{e}$ as in $\g_{-1}$, the projection to $\g_{-\alpha}$ is non-zero for every $\alpha\neq -\delta$. For $(P,\phi_q),$ for $\alpha\in \Pi,$ the associated line bundle $L_\alpha$ identifies with $\g_{-\alpha}\otimes \mathcal{K}^{-1}$, and $L_{-\delta}$ with $\g_{\delta}\otimes \mathcal{K}^{r-1}$ (note $\g_{\delta}=\g_{r-1}$).

Finally, while we worked here with groups of adjoint type, as Hitchin did in \cite{Hi}, we mentioned earlier that, for $G$ of rank at least $2$, Hitchin representations lift to the universal cover of $G$. In the sequel, when referring to a Hitchin representation or a $G$-Higgs bundle in the Hitchin section, we allow for non-adjoint groups $G$, with an object being Hitchin if it comes from a lift of a Hitchin object.

\section{Elliptic systems}
Here we use Theorem A to derive the Bochner-Toda system, which is the main system governing the analytic quantities involved in Theorems B, C, and C'. Then we recall Dai-Li's maximum principle \cite[Lemma 3.1]{Dai2018}.

\subsection{Bochner--Toda system}\label{subsubsec:bochner-toda} 
Fix an ambient metric $\mu$ on the underlying Riemann surface $S$ that's compatible with the Riemann surface structure. Denote by $\Omega_\mu$ the area form associated with $\mu$. If $\mu$ is given in local coordinates by $\mu(z)\abs{dz}^2$, then $\Omega_\mu=\mu(z)\frac{i}{2}dz\wedge d\bar{z}$, and we will occasionally use these expressions. 
\par We now let $(P,\phi)$ be a stable $G$-Higgs bundle on a $S$ with harmonic metric $h$ and equivariant harmonic map $f_\phi$. We will abuse notation slightly and write $\phi$ for $\textrm{ad}(\phi)$ when it is clear that we're working in the adjoint representation. We write $|\phi|_{h,\mu}$ for the norm of $\phi$ with respect to the metrics $h$ on $\textrm{ad} P$ and $\mu$ on $S$. Formally, if $\phi=\phi_0dz,$ for $\phi_0$ a section of $\textrm{ad} P$, then $$|\phi|_{h,\mu}^2=\nu(\phi_0,\phi_0^{*_h})|dz|_\mu^2.$$
Recall the definition of energy density from the introduction: $$e(f_\phi) = \textrm{tr}_\mu f_\phi^*\nu.$$ By equivariance of $f_\phi$, the energy density is invariant under the $\pi_1(S)$ action and hence descends to a function on $S$.

By Proposition \ref{prop: one side structure}, we obtain line bundles $L_\alpha$ and sections $\phi_\alpha\in H^0(S,L_\alpha\otimes \mathcal{K}).$ By Theorem A and the discussion after the proof, we have a collection of metrics $\mu_\alpha$ on $L_\alpha$ solving (\ref{eq: firstthm}). Set $e_\alpha=\Omega_\mu^{-1}\frac{i}{2}\mu_\alpha(\phi_\alpha\wedge\bar{\phi}_\alpha)$.
\begin{prop}\label{prop: energy}
    When $(P, \phi)$ is Coxeter cyclic,
    \[e(f_\phi) = |\phi|_{h,\mu}^2=\sum_{\alpha\in\mathcal{Z}} e_\alpha.\]
\end{prop}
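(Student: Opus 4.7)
The statement has two equalities: an identification of the energy density with the squared norm of the Higgs field, and a decomposition of that squared norm as $\sum_\alpha e_\alpha$.

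For the first equality, $e(f_\phi)=|\phi|_{h,\mu}^2$, my plan is to invoke the standard identification from the non-abelian Hodge correspondence: under the passage from harmonic maps to Higgs bundles recalled in \S\ref{sec: introducing harmonic maps}, the Higgs field $\phi$ is identified with the $(1,0)$-part $\partial f$ of the derivative of $f_\phi$ (viewed via the isomorphism $f^*TG/K\otimes_\R\C\cong\textrm{ad}P$), and the metric $h$ on $\textrm{ad}P$ agrees with the Killing metric $\nu$ on $TG/K$ on the real form. Taking the $\mu$-trace of $f^*\nu$ and matching it to the norm $|\phi|^2_{h,\mu}$ using the Kähler identity $\mathrm{tr}_\mu f^*\nu = \nu(\phi_0,\phi_0^{*_h})|dz|_\mu^2$ (in local coordinates with $\phi=\phi_0\,dz$) gives the claimed equality with the author's chosen normalization.

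For the second equality, the plan is to use the Lie-theoretic structure already established in the proof of Theorem~A. By Proposition~\ref{prop: one side structure}, the Higgs field decomposes as $\phi=\sum_{\alpha\in\mathcal{Z}}\phi_\alpha$ with $\phi_\alpha$ a section of $L_\alpha\otimes\mathcal{K}$. By Lemma~\ref{claim:orthogonal-subbundles}, the subbundles $\{L_\alpha\}_{\alpha\in\mathcal{Z}}$ are mutually orthogonal with respect to the Hitchin metric induced by $\rho$. Therefore the pointwise norm splits as
\[
|\phi|_{h,\mu}^2=\sum_{\alpha\in\mathcal{Z}}|\phi_\alpha|_{h,\mu}^2.
\]
By the definition in \S\ref{subsubsec:hitchin-equations-middle}, the metric on $L_\alpha$ induced from $h$ is precisely $\mu_\alpha$, so each summand equals the norm of $\phi_\alpha$ computed with respect to $\mu_\alpha$ on $L_\alpha$ and $\mu$ on $S$.

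The last step is a coordinate unwinding: writing $\phi_\alpha=s_\alpha\otimes dz$ locally with $s_\alpha$ a section of $L_\alpha$, one has
\[
\mu_\alpha(\phi_\alpha\wedge\overline{\phi}_\alpha)=\mu_\alpha(s_\alpha,s_\alpha)\,dz\wedge d\bar z,\qquad \Omega_\mu=\mu(z)\tfrac{i}{2}dz\wedge d\bar z,
\]
so that
\[
e_\alpha=\Omega_\mu^{-1}\tfrac{i}{2}\mu_\alpha(\phi_\alpha\wedge\overline{\phi}_\alpha)=\frac{\mu_\alpha(s_\alpha,s_\alpha)}{\mu(z)}=|\phi_\alpha|_{h,\mu}^2.
\]
Combining these steps yields the claim. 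There is no real obstacle here; the proof amounts to the orthogonality provided by Lemma~\ref{claim:orthogonal-subbundles} plus careful book-keeping of the conversion from $L_\alpha\otimes\overline{L}_\alpha$-valued $2$-forms to scalar functions via $\Omega_\mu$, and the only mild subtlety is ensuring the normalization conventions for $\nu$, $h$, and $\mu$ are compatible with the stated formula $e(f_\phi)=|\phi|_{h,\mu}^2$.
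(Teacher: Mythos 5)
Your proof is correct and follows essentially the same route as the paper: the first equality via the standard Maurer--Cartan identification of $\phi$ with $\partial f$ and a trace computation, and the second via the mutual orthogonality of the $L_\alpha$ from Lemma \ref{claim:orthogonal-subbundles} together with the local unwinding of $e_\alpha$. The only detail the paper adds that you omit is the observation that one may pass to the associated $\textrm{ad}G$-Higgs bundle without changing $|\phi|_{h,\mu}^2$, which matters because Lemma \ref{claim:orthogonal-subbundles} and the $P_T$-trivializations were established under the adjoint-type hypothesis, while the proposition is applied to general simple $G$.
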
 
\begin{proof}
    The first equality is well known and holds for arbitrary (stable) $G$-Higgs bundles. Using the isomorphism induced by the Maurer-Cartan form, $f^*\nu = \nu(\phi,\phi^{*_h}) + \nu(\phi,\phi) + \nu(\phi^{*_h},\phi^{*_h}).$ Taking the trace isolates the first term, and we get $e(f)=\mu^{-1}\nu(\phi,\phi^{*_h})=|\phi|_{h,\mu}^2.$
    \par The quantity $|\phi|_{h,\mu}^2$ agrees with the corresponding quantity for the Higgs field on the associated $\textrm{ad}G$-Higgs bundle (geometrically, we have a local isometry of symmetric spaces that intertwines the harmonic maps). The second equality then follows immediately from Lemma \ref{claim:orthogonal-subbundles}, and arguments analogous to the those in the computation of $[\phi,\phi^{*_h}]$ in \S \ref{subsubsec:hitchin-equations-middle}.
\end{proof}
It makes sense to refer to $e_\alpha$ as the energy of the root $\alpha$. We write down a system for the $e_\alpha$'s, $\alpha\in\mathcal{Z}$. To do so, we additionally need the following local computation. 
\begin{lem}
    Away from the zeros of $e_\alpha$, we have $\Delta_\mu \log e_\alpha=-2i\Omega_\mu^{-1}F(\mu_\alpha)+2K_\mu$, where $K_\mu$ is the sectional curvature of the metric $\mu$. 
\end{lem}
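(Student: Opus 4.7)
The plan is to reduce to a routine local calculation in a holomorphic coordinate. First I would fix a holomorphic coordinate $z$ on a small patch $U\subset S$ and choose a local holomorphic frame $s$ of $L_\alpha$. Writing $\phi_\alpha = \psi\, s\otimes dz$ for some holomorphic function $\psi$, and recalling that $\mu_\alpha(\phi_\alpha\wedge\bar\phi_\alpha)=|\psi|^2\,|s|_{\mu_\alpha}^2\,dz\wedge d\bar z$, the definition of $e_\alpha$ together with $\Omega_\mu=\mu(z)\,\tfrac{i}{2}\,dz\wedge d\bar z$ gives
\begin{equation*}
  e_\alpha \;=\; \mu(z)^{-1}\,|\psi|^2\,|s|_{\mu_\alpha}^2 ,
\end{equation*}
so that, away from the zeros of $e_\alpha$ (which are precisely the zeros of $\psi$),
\begin{equation*}
  \log e_\alpha \;=\; -\log\mu(z) \;+\; \log|\psi|^2 \;+\; \log|s|_{\mu_\alpha}^2 .
\end{equation*}

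Next I would apply $\Delta_\mu = \tfrac{4}{\mu(z)}\,\partial_z\partial_{\bar z}$ term by term. The middle term vanishes since $\log|\psi|^2 = \log\psi + \log\bar\psi$ is pluriharmonic where $\psi\neq 0$. The Gauss curvature formula $K_\mu = -\tfrac{1}{2}\Delta_\mu\log\mu$ converts the first term into $+2K_\mu$. For the last term, I would use the standard local expression for the Chern curvature of a Hermitian line bundle in a holomorphic frame, $F(\mu_\alpha) = -\partial\bar\partial\log|s|_{\mu_\alpha}^2 = -\partial_z\partial_{\bar z}\log|s|_{\mu_\alpha}^2\,dz\wedge d\bar z$, together with $\Omega_\mu^{-1}(dz\wedge d\bar z)=\tfrac{2}{i\mu(z)}$, to rewrite
\begin{equation*}
  \Delta_\mu\log|s|_{\mu_\alpha}^2
  \;=\; \frac{4}{\mu(z)}\,\partial_z\partial_{\bar z}\log|s|_{\mu_\alpha}^2
  \;=\; -2i\,\Omega_\mu^{-1}F(\mu_\alpha).
\end{equation*}
Summing the three contributions yields the claimed identity, and since both sides are tensorial they assemble into the global formula.

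There is no real obstacle here; the proof is a bookkeeping exercise. The only point requiring care is tracking the factor $-2i$ and the orientation convention hidden in $\Omega_\mu^{-1}$, so that the sign and normalization in the Chern curvature formula are consistent with the sign convention used for the Gauss curvature in the first term.
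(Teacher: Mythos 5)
Your proposal is correct and follows essentially the same route as the paper: a local computation in a holomorphic coordinate and trivialization of $L_\alpha$, splitting $\log e_\alpha$ into the conformal factor, the holomorphic section (whose log is harmonic away from its zeros), and the metric on $L_\alpha$, then identifying the last two contributions with $2K_\mu$ and $-2i\Omega_\mu^{-1}F(\mu_\alpha)$ respectively. The sign and normalization bookkeeping you flag is handled consistently with the paper's conventions.
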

\begin{proof}
Write $\mu=\mu(z)\abs{dz}^2$ in a local holomorphic coordinate $z$ on $S$, and write $\phi_\alpha= fdz$, choosing in the process a holomorphic trivialization of $L_\alpha$. Denote analogously by $\mu_\alpha(z)$ the expression of $\mu_\alpha$ in this trivialization. Then $e_\alpha(z)=\abs{f}^2 \mu(z)^{-1}\mu_\alpha(z)$, and 
\begin{align*}
    \Delta_\mu \log e_\alpha&=\frac{4}{\mu(z)} \frac{\partial^2 \log e_\alpha(z)}{\partial z\partial \bar{z}}=\frac{2i \partial\bar{\partial}\log \mu_\alpha(z)}{\Omega_\mu}-\frac{4}{\mu(z)}\frac{\partial^2\log\mu(z)}{\partial z\partial \bar{z}}\\
    &=-2i \Omega_\mu^{-1}F(\mu_\alpha)+2K_\mu,
\end{align*}
where we used \cite[\S 7, pp 9]{Schoen1997} in the last line.
\end{proof}
From (\ref{eq: firstthm}), we now immediately get the Bochner-Toda system, whose name is inspired by the classical theory of harmonic maps.
\begin{thm}[Bochner--Toda equations]\label{thm: BT eqns}
    Suppose we are in the setting of Theorem A, and that we have a metric $\mu$ on $S$ with sectional curvature $K_\mu$. Then for $e_\alpha=\Omega_\mu^{-1}\frac{i}{2}\mu_\alpha(\phi_\alpha\wedge\bar{\phi}_\alpha)$, away from its zeros we have 
    \begin{equation}\label{eq:bochner-toda}
    \Delta_\mu \log e_\alpha=2K_\mu+4\sum_{\beta\in\mathcal{Z}}\nu(\alpha, \beta) e_\beta,
    \end{equation}
    for all $\alpha\in\mathcal{Z}$. Moreover, using the isomorphism $\otimes_{\alpha} L_\alpha^{n_\alpha}\cong\mathcal{O}$, we have 
    \begin{equation}\label{eq: energy-product}
        \prod_{\alpha\in\mathcal{Z}}e_\alpha^{n_\alpha}=\abs{\prod_{\alpha\in\mathcal{Z}} \left(\mu^{-1/2}\phi_\alpha\right)^{n_\alpha}}^2.
    \end{equation}
\end{thm}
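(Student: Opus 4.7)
\medskip

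The two assertions are essentially immediate consequences of the material already assembled in this section, so the plan is more a matter of bookkeeping than of introducing new ideas.

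For the first assertion, the plan is to start from the immediately preceding lemma, which says that away from the zero locus of $e_\alpha$,
\[
\Delta_\mu \log e_\alpha = -2i\,\Omega_\mu^{-1} F(\mu_\alpha) + 2K_\mu.
\]
Next I would substitute the identity supplied by Theorem~A, $F(\mu_\alpha) = -\sum_{\beta \in \mathcal{Z}} \nu(\alpha,\beta)\,\mu_\beta(\phi_\beta \wedge \bar\phi_\beta)$. Finally, I would rewrite each term $\mu_\beta(\phi_\beta\wedge\bar\phi_\beta)$ in terms of $e_\beta$ using the very definition $e_\beta = \Omega_\mu^{-1}\cdot\tfrac{i}{2}\mu_\beta(\phi_\beta\wedge\bar\phi_\beta)$, i.e., $\mu_\beta(\phi_\beta\wedge\bar\phi_\beta) = -2i\,e_\beta\,\Omega_\mu$. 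Combining these and noting that $(-2i)(2i) = 4$, the two $\Omega_\mu$ factors cancel and one obtains exactly $\Delta_\mu \log e_\alpha = 2K_\mu + 4\sum_{\beta\in\mathcal{Z}} \nu(\alpha,\beta)\,e_\beta$, which is (\ref{eq:bochner-toda}).

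For the second assertion, my plan is to argue in a local holomorphic coordinate $z$ on $S$ together with local holomorphic trivializations $\tau_\alpha$ of each $L_\alpha$. Writing $\phi_\alpha = f_\alpha \tau_\alpha\,dz$ for holomorphic functions $f_\alpha$, and setting $H_\alpha = \mu_\alpha(\tau_\alpha,\tau_\alpha)$, the same computation as in the preceding lemma gives the local expression
\[
e_\alpha = \mu^{-1}\,|f_\alpha|^2\,H_\alpha,
\]
hence $\prod_\alpha e_\alpha^{n_\alpha} = \mu^{-r}\,\prod_\alpha |f_\alpha|^{2n_\alpha}\,\prod_\alpha H_\alpha^{n_\alpha}$, where $r = \sum_\alpha n_\alpha$ is the Coxeter number. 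The key input is the normalization from Theorem~A: $\Theta_*\prod_\alpha \mu_\alpha^{n_\alpha}$ is the flat metric on $\mathcal{O}$. Writing $\Theta(\bigotimes_\alpha \tau_\alpha^{n_\alpha}) = \theta$ for a local holomorphic scalar $\theta$, this normalization translates to $\prod_\alpha H_\alpha^{n_\alpha} = |\theta|^2$. Plugging $\Theta(\prod_\alpha(\mu^{-1/2}\phi_\alpha)^{n_\alpha}) = \mu^{-r/2}\,\theta\,\prod_\alpha f_\alpha^{n_\alpha}$ into the right-hand side of (\ref{eq: energy-product}) gives $\mu^{-r}|\theta|^2\prod_\alpha|f_\alpha|^{2n_\alpha}$, matching the expression for $\prod_\alpha e_\alpha^{n_\alpha}$ above.

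There is no real obstacle here: the only potential pitfall is notational, namely making sense of the symbol $\mu^{-1/2}\phi_\alpha$ as an $L_\alpha$-valued object (obtained by using the conformal metric $\mu$ to trivialize $\mathcal{K}$) so that the product $\prod_\alpha(\mu^{-1/2}\phi_\alpha)^{n_\alpha}$ can be viewed as a section of $\bigotimes_\alpha L_\alpha^{n_\alpha}$ and measured by the flat metric on $\mathcal{O}$ via $\Theta$. Once this is spelled out, both identities drop out of the formal manipulations above.
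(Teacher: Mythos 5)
Your proposal is correct and follows exactly the route the paper takes: the paper derives \eqref{eq:bochner-toda} by combining the preceding lemma $\Delta_\mu\log e_\alpha=-2i\Omega_\mu^{-1}F(\mu_\alpha)+2K_\mu$ with \eqref{eq: firstthm}, and treats \eqref{eq: energy-product} as an immediate consequence of the flatness of $\Theta_*\prod_\alpha\mu_\alpha^{n_\alpha}$. Your write-up simply makes explicit the local bookkeeping ($e_\alpha=\mu^{-1}|f_\alpha|^2H_\alpha$ and $\prod_\alpha H_\alpha^{n_\alpha}=|\theta|^2$) that the paper leaves implicit.
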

By the definition of $e_\alpha,$ solving (\ref{eq:bochner-toda}) is equivalent to solving Hitchin's equations.

We obtain a variant of the Bochner-Toda equations by setting $\tilde{e}_\alpha = e_\alpha\nu(\alpha,\alpha).$ The system of (\ref{eq:bochner-toda}) becomes 
\begin{equation}\label{eq:bochner-toda-variant}
    \Delta_\mu \log \tilde{e}_\alpha=2K_\mu+2\sum_{\beta\in\mathcal{Z}}a_{\alpha\beta} \tilde{e}_\beta,
    \end{equation}
    where, as in \S \ref{sec: root systems}, $a_{\alpha\beta}=2\frac{\nu(\alpha,\beta)}{\nu(\beta,\beta)}$. 
    The system (\ref{eq:bochner-toda}) will be more useful for proving Theorem B, while (\ref{eq:bochner-toda-variant}) will be for Theorems C and C'. The first advantage of (\ref{eq:bochner-toda-variant}) is that it is invariant under the choice of normalization of the Killing form. Perhaps more significantly, as we'll exploit in \S \ref{sec: curvature}, when $(P,\phi)$ is induced by a $G^{\R}$-Higgs bundle, $\tilde{e}_\alpha=\tilde{e}_{\sigma_0(\alpha)}$, where $\sigma_0$ is the involution from \S \ref{sec: G^R higgs} (this only has an effect when the action on the simple roots is non-trivial). When the context is clear, we omit the word ``variant" and just call (\ref{eq:bochner-toda-variant}) a Bochner-Toda system.

\subsection{Dai-Li Maximum Principle}
Dai-Li's maximum principle is below. We do not apply it to Bochner-Toda systems explicitly, but we apply it to systems that we further derive from Bochner-Toda systems.
\begin{thm}[Lemma 3.1 in \cite{Dai2018}]\label{thm: DL max}
      Let $X$ be a closed Riemannian manifold, and for $i=1,\dots, n,$ let $P_i$ be an isolated subset of $X$ (which can be empty), and $P=\cup_{i=1}^n P_i.$ Let $c_{ij}:X\backslash P\to\mathbb{R}$ be smooth and bounded functions indexed by $1\leq i,j\leq n$, such that 
    \begin{enumerate}
        \item(cooperative) $c_{ii}\geq 0$ and $c_{ij}\leq 0$ for $i\neq j$, 
        \item(column-diagonally dominant) $\sum_{i=1}^n c_{ij}\geq 0$ for all $1\leq j\leq n$, and 
        \item(fully coupled) there is no partition $\{1,2,...,n\}=A\cup B$ such that $c_{ij}=0$ for $i\in A, j\in B$.  
    \end{enumerate}
    Suppose that $u_i:X\backslash P_i\to\mathbb{R}$, $i=1,\dots, n$, are smooth functions that approach $+\infty$ around $P_i$ and solve the system 
    \begin{align*}
        \Delta u_i=\sum_{j=1}^n c_{ij}u_j+\langle Y,\nabla u_i\rangle+f_i\text{ for }i=1,2,...,n,
    \end{align*}
    for some continuous vector field $Y$ and non-positive continuous functions $f_i.$
Consider the following conditions.
\begin{enumerate}
    \item $(f_1,\dots, f_n)\neq (0,\dots, 0).$
    \item $P$ is non-empty.
    \item $\sum_{i=1}^n u_i\geq 0.$
\end{enumerate}
Either condition (1) or (2) implies $u_i>0$ for all $i,$ and condition (3) implies either $u_i>0$ for all $i$ or $u_i\equiv 0$ for all $i.$ 
\end{thm}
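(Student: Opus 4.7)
The plan is a maximum-principle argument by contradiction, supplemented by the cooperative structure and full coupling to propagate strict positivity. Because $X$ is compact and each $u_i$ tends to $+\infty$ near the isolated set $P_i$, the global infimum $m:=\inf_{i,\,x\in X\setminus P_i}u_i(x)$ is finite and attained at some pair $(i^*,x^*)$ with $x^*\in X\setminus P$. Let $A=\{i:u_i(x^*)=m\}$ and $B=\{1,\dots,n\}\setminus A$. The objective is to prove $m>0$ under hypothesis (1) or (2), and to prove $m\geq 0$ with equality forcing $u_i\equiv 0$ for all $i$ under hypothesis (3).

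For each $i\in A$ the function $u_i$ has a global minimum at $x^*$, so $\nabla u_i(x^*)=0$ and $\Delta u_i(x^*)\geq 0$. Summing the equations indexed by $A$ at $x^*$ gives
\[
0\leq\sum_{j}C^A_j(x^*)\,u_j(x^*)+\sum_{i\in A}f_i(x^*),\qquad C^A_j(x^*):=\sum_{i\in A}c_{ij}(x^*).
\]
Cooperativity ($c_{ij}\leq 0$ for $i\neq j$) combined with column-diagonal dominance yields $C^A_j(x^*)\geq 0$ for $j\in A$ (since $\sum_{i\in B}c_{ij}(x^*)\leq 0$ while the full column sum is $\geq 0$), and $C^A_j(x^*)\leq 0$ for $j\in B$. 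Rewriting
\[
\sum_j C^A_j(x^*)\,u_j(x^*)=m\sum_j C^A_j(x^*)+\sum_{j\in B}C^A_j(x^*)\bigl(u_j(x^*)-m\bigr),
\]
the second sum is nonpositive. Combined with $\sum_{i\in A}f_i(x^*)\leq 0$, this gives $m\sum_j C^A_j(x^*)\geq 0$. In the case $A=\{1,\dots,n\}$ this reduces to $m\sum_j d_j(x^*)\geq 0$ with $d_j=\sum_i c_{ij}\geq 0$, so any strictness of column dominance at $x^*$ forces $m\geq 0$.

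To extend to $A\neq\{1,\dots,n\}$ and to promote $m\geq 0$ to the desired dichotomy, I would invoke full coupling and the strong maximum principle. Full coupling supplies $i_0\in A,\,j_0\in B$ with $c_{i_0 j_0}\not\equiv 0$, which, after Harnack-type propagation if necessary, makes the $(B,A)$-contribution above strictly negative and yields a strict contradiction when $m<0$. When $m=0$, the strong maximum principle applied to the scalar equation for $\sum_i u_i$—whose zeroth-order coefficients $d_j$ are nonnegative by column dominance—together with full coupling forces each $u_i$ to be identically $m=0$ on $X\setminus P$. Hypothesis (2) then rules out any such constant solution because $u_i$ blows up at $P_i$; hypothesis (1) rules out $m=0$ via the nonzero $f_{i_0}$ (as a nonzero constant would violate the equation somewhere); and hypothesis (3) delivers exactly the trivial solution.

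The principal obstacle is reconciling column dominance—a natural adjoint-side condition—with the pointwise maximum principle, which is most cleanly run row by row. The fix is precisely to sum the equations over the active set $A$: the identity $C^A_j=\sum_{i=1}^n c_{ij}-\sum_{i\in B}c_{ij}$ combined with cooperativity of the $(B,A)$-block converts column dominance into the sign $C^A_j\geq 0$ on $A$ that the argument needs, and full coupling carries this local information into global positivity across both the manifold and the index set.
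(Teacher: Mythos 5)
There is a genuine gap at the heart of your argument. After summing the equations over the active set $A$ at the minimum point $x^*$ you arrive at $0\leq m\sum_{j}C^A_j(x^*)$, but the quantity $\sum_{j=1}^n C^A_j=\sum_{i\in A}\sum_{j=1}^n c_{ij}$ is a sum of \emph{row} sums of the rows indexed by $A$, while the hypotheses control only \emph{column} sums; for a proper subset $A$ this quantity has no definite sign (for instance $n=2$, $c_{11}=1$, $c_{12}=-2$, $c_{21}=-1/2$, $c_{22}=3$ is cooperative, column-diagonally dominant and fully coupled, yet the first row sum is $-1$), so the inequality yields nothing, or even points the wrong way, whenever $A\neq\{1,\dots,n\}$ -- and nothing forces the active set at the global minimum to be everything. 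Your proposed repair via full coupling does not work either: full coupling only says that some $c_{i_0j_0}$ with $i_0\in A$, $j_0\in B$ is not \emph{identically} zero as a function on $X$; it may perfectly well vanish at the particular point $x^*$, so it cannot make the $(B,A)$-contribution strictly negative there, and the invoked ``Harnack-type propagation'' is not supplied. The equality-case step has a similar defect: $\Delta\bigl(\sum_i u_i\bigr)=\sum_j d_ju_j+\langle Y,\nabla\sum_iu_i\rangle+\sum_if_i$ is not a scalar equation in the single unknown $\sum_iu_i$, so the strong maximum principle cannot be applied to it directly. Finally, your scheme never establishes $m\geq 0$ under hypotheses (1) or (2) alone, where $\sum_iu_i\geq0$ is not assumed.

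For comparison: the paper quotes the statement from Dai--Li, but in the appendix (Theorems \ref{thm:mp} and \ref{thm:dai-li-gen}) it proves a weighted generalization of case (3) by a mechanism that repairs exactly these defects. Your instinct to sum over an index subset is the right one, but the sums $w_A=\sum_{i\in A}u_i$ must be compared as \emph{global functions} for \emph{every} subset $A$, not at a single point: the sign computation you carry out ($\sum_{i\in A}c_{ij}\geq0$ for $j\in A$ and $\leq0$ for $j\notin A$, which is precisely (\ref{eq:check-sign-A})) shows that $C^*e_A$ is a nonnegative combination of the differences $e_A-e_{A'}$ over subsets $A'$ differing from $A$ by one element, whence each $w_A$ satisfies a genuine scalar differential inequality and the classical maximum principle gives $\min_X w_A\geq\min_{A'}\min_X w_{A'}$. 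Chaining these comparisons along a path of subsets terminating at $A=\emptyset$ (where $w_\emptyset=0$) yields $u_i\geq0$ without ever needing a sign on $\sum_jC^A_j$, and full coupling enters only through ``not identically zero,'' exactly as the hypothesis allows, via the strong maximum principle in the equality case. To complete your proof you would need to adopt some version of this global subset-comparison; the pointwise active-set argument cannot be closed as written.
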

We use condition (3) to prove Theorem B, and condition (2) to prove Theorems C and C'. Theorem \ref{thm:dai-li-gen} in the appendix relaxes condition (3).

\section{Monotonicity Theorem}
In this section we prove Theorem B and Corollary B. We then discuss taking the $t$ parameter to zero in \S \ref{subsec: t to zero}. In this section, $G$ is a simple complex Lie group.
\subsection{Proof of Theorem B}
We recall the setup from the introduction. Let $\mu$ be a conformal metric on the closed Riemann surface $S$ and let $(P,\phi)$ be a stable and simple Coxeter cyclic $G$-Higgs bundle on $S$ that is not a fixed point of the $\C^*$-action. For each $t\in \C^*,$ let $f_t:\tilde{S}\to G/K$ be the equivariant harmonic map associated with $(P,t\phi).$ Let $e_\alpha(t)$ be the energy of the root $\alpha$ for the bundle $(P, t\phi)$. We point out that, from Remark \ref{rem: S^1 invariance}, $e_\alpha(t)=e_{\alpha}(|t|).$
\par By Proposition \ref{prop: energy},
\begin{equation}\label{eq: energy formula}
    \mathrm{e}(f_t)=\sum_{\alpha\in\mathcal{Z}}e_\alpha(t),
\end{equation}
and hence Theorem B follows immediately from Lemma \ref{lm:ineq-alpha} below.
\begin{lem}\label{lm:ineq-alpha}
    For all $\alpha\in\mathcal{Z}$, away from the zero set of $\phi_\alpha,$ $e_\alpha(t)$ is strictly increasing in $|t|$.
\end{lem}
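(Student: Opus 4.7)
Fix $0 < s_1 < s_2$ and consider, for each $\alpha\in\mathcal{Z}$, the function $u_\alpha := \log e_\alpha(s_2) - \log e_\alpha(s_1)$. Although $\log e_\alpha(s_i)$ blows up on the (common, $s$-independent) zero set of $\phi_\alpha$, the two logarithmic singularities have the same order $2\,\mathrm{ord}_x(\phi_\alpha)\log|z|$ and cancel, so $u_\alpha$ extends smoothly to all of $S$. Since $e_\alpha(t) = e_\alpha(|t|)$, showing $u_\alpha > 0$ pointwise on $S$ for arbitrary $s_1 < s_2$ will prove the lemma at every point where $\phi_\alpha \neq 0$.

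Subtracting two copies of the Bochner--Toda equation (\ref{eq:bochner-toda}) and writing $e_\beta(s_2) - e_\beta(s_1) = e_\beta(s_1)\bigl(e^{u_\beta}-1\bigr)$ yields a linear system
\[
\Delta_\mu u_\alpha \;=\; \sum_{\beta\in\mathcal{Z}} C_{\alpha\beta}\, u_\beta, \qquad C_{\alpha\beta} \;=\; 4\,\nu(\alpha,\beta)\,e_\beta(s_1)\,\frac{e^{u_\beta}-1}{u_\beta},
\]
with the convention that the last factor is $1$ when $u_\beta = 0$. By Proposition \ref{prop: angle relations}, $C_{\alpha\alpha} \geq 0$ and $C_{\alpha\beta} \leq 0$ for $\alpha\neq\beta$, so the system is cooperative; connectedness of the extended Dynkin diagram, combined with the observation (Remark \ref{rem: no phi_i zero}) that no $\phi_\beta$ is identically zero under the stability, simplicity and non-$\C^*$-fixed hypotheses, makes the system fully coupled. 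To apply Dai-Li's maximum principle (Theorem \ref{thm: DL max}), the remaining obstruction is column-diagonal dominance, which does not hold for $(C_{\alpha\beta})$ directly.

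The key step is a weighted rescaling. Set $\tilde u_\alpha := n_\alpha u_\alpha$; then $\Delta_\mu \tilde u_\alpha = \sum_\beta \tilde C_{\alpha\beta}\,\tilde u_\beta$ with $\tilde C_{\alpha\beta} = (n_\alpha/n_\beta)\, C_{\alpha\beta}$, which preserves all sign conditions. The column sums become
\[
\sum_{\alpha\in\mathcal{Z}} \tilde C_{\alpha\beta} \;=\; \frac{4\,e_\beta(s_1)}{n_\beta}\,\frac{e^{u_\beta}-1}{u_\beta}\,\nu\!\left(\sum_{\alpha\in\mathcal{Z}} n_\alpha\alpha,\;\beta\right) \;=\; 0,
\]
thanks to the defining affine relation $\sum_{\alpha\in\mathcal{Z}} n_\alpha\alpha = 0$ for extended simple roots. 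To invoke hypothesis (3) of Theorem \ref{thm: DL max} I will use identity (\ref{eq: energy-product}): since $\phi\mapsto s\phi$ sends each $\phi_\alpha$ to $s\phi_\alpha$, taking logarithms gives $\sum_\alpha n_\alpha \log e_\alpha(s) = 2r\log s + (\text{function of position only})$, so $\sum_\alpha \tilde u_\alpha = 2r\log(s_2/s_1) > 0$. The Dai-Li dichotomy then forces $\tilde u_\alpha > 0$ for all $\alpha$, giving the desired strict inequality.

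The hardest point is arranging column-diagonal dominance; the insight is that the marks $n_\alpha$ of the extended Dynkin diagram, already built into the isomorphism $\bigotimes_{\alpha}L_\alpha^{n_\alpha}\cong\mathcal{O}$ of Proposition \ref{prop: structure}, are precisely the weights that make the column sums vanish via the affine relation among extended simple roots, and simultaneously produce the positive sum $\sum_\alpha \tilde u_\alpha$ needed to break the dichotomy.
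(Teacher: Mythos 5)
Your proof is correct and follows essentially the same route as the paper's: form the smooth log-ratios, subtract two Bochner--Toda systems, weight by the marks $n_\alpha$ so that the affine relation $\sum_{\alpha\in\mathcal{Z}}n_\alpha\alpha=0$ yields column-diagonal dominance, and trigger condition (3) of Theorem \ref{thm: DL max} via the product identity (\ref{eq: energy-product}). (Your constant $2r\log(s_2/s_1)$ for the sum $\sum_\alpha \tilde u_\alpha$ is the correct one; only its positivity matters.)
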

\begin{proof}
For any $t$ and $s$, $\log\frac{e_\alpha(t)}{e_\alpha(s)}$ is a smooth function on $S,$ since the divisor of $e_\alpha(t)$, being the divisor of $\phi_\alpha$, does not depend on $t$, and each $\phi_\alpha$ is holomorphic. For $|t|>|s|$, by (\ref{eq:bochner-toda}) we have 
\begin{align*}
    \Delta_\mu \left(n_\alpha\log\frac{e_\alpha(t)}{e_\alpha(s)}\right)=4\sum_{\beta\in\mathcal{Z}} \nu(n_\alpha\alpha, n_\beta^{-1}\beta) \frac{e_\beta(t)-e_\beta(s)}{\log e_\beta(t)-\log e_\beta(s)}\left(n_\beta \log\frac{e_\beta(t)}{e_\beta(s)}\right).
\end{align*}
    We now apply Theorem \ref{thm: DL max} to this system for the functions $(n_\alpha\log\frac{e_\alpha(t)}{e_\alpha(s)}: \alpha\in\mathcal{Z})$.
    \begin{enumerate}
        \item The system is cooperative since by Proposition \ref{prop: angle relations} we have $\nu(\alpha,\alpha)>0$ and $\nu(\alpha,\beta)\leq 0$ for $\alpha\neq\beta$. 
        \item We have 
        \begin{align*}
            \sum_{\alpha\in\mathcal{Z}} \nu(n_\alpha\alpha, n_\beta^{-1}\beta) \frac{e_\beta(t)-e_\beta(s)}{\log e_\beta(t)-\log e_\beta(s)}&=\frac{e_\beta(t)-e_\beta(s)}{\log e_\beta(t)-\log e_\beta(s)}\nu\left(n_\beta^{-1}\beta, \sum_{\alpha\in\mathcal{Z}} n_\alpha\alpha\right)\\&=0,
        \end{align*}
        for all $\beta\in\mathcal{Z}$, so the system is also column diagonally dominant.
        \item Since $\phi$ is stable and simple, Remark \ref{rem: no phi_i zero} guarantees that for all $\alpha\in\mathcal{Z},$ $\phi_\alpha$ is not identically zero. The fact that the system is fully coupled then follows from this assertion together with the fact that the affine Dynkin diagram associated with a simple complex Lie group is connected.
    \end{enumerate}
 Finally, by (\ref{eq: energy-product}),
    \begin{align*}
    \sum_{\alpha\in\mathcal{Z}} n_\alpha\log\frac{e_\alpha(t)}{e_\alpha(s)}=2\log\frac{|t|}{|s|}>0,
    \end{align*}
    so by Theorem \ref{thm: DL max} we have $n_\alpha\log\frac{e_\alpha(t)}{e_\alpha(s)}>0$ for all $\alpha\in\mathcal{Z}$.
    \end{proof}

    \begin{proof}[Proof of Theorem B]
        Apply the formula (\ref{eq: energy formula}) and Lemma \ref{lm:ineq-alpha}.
    \end{proof}

\subsection{Volume entropy} We now prove Corollary B. All that we will use about volume entropy is that it satisfies the following two properties, which are easy to see from the definition.
\begin{enumerate}
\item if $\mu_1\leq \mu_2$, then $\textrm{Ent}(\mu_2)\leq \textrm{Ent}(\mu_1).$
\item We have the scaling property $\textrm{Ent}(t\mu)=t^{-1}\textrm{Ent}(\mu).$
\end{enumerate}
Before getting to the corollary, we prove a general result about volume entropy for stable $G$-Higgs bundles, making use of an object studied in \cite{SS}.  A $G$-Higgs bundle $(P,\phi)$ is called nilpotent if $\textrm{ad}(\phi)$ defines a nilpotent endomorphism of $\g$ in every fiber.
\begin{prop}\label{prop: entropy}
     Let $(P,t\phi)$, $t\in \C^*$, be a family of stable and non-nilpotent $G$-Higgs bundles such that $\nu(\phi,\phi)=0$, with harmonic maps $f_t$ to the symmetric space of $G$.  Then, $$\lim_{|t|\to \infty} \textrm{Ent}(f_t^*\nu)=0.$$
\end{prop}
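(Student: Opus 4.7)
The plan is to leverage the conformality condition $\nu(\phi,\phi)=0$ together with the non-nilpotency hypothesis to produce a pointwise lower bound on $e(f_t)$ that grows uniformly like $|t|^2$, and then to deduce decay of $\textrm{Ent}(f_t^*\nu)$ from the monotonicity and scaling properties of volume entropy stated just before the proposition.

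First, because $\nu(\phi,\phi)=0$ (equivalently, the Hopf differential of the harmonic map vanishes), the identity $f^*\nu=\nu(\phi,\phi^{*_h})+\nu(\phi,\phi)+\nu(\phi^{*_h},\phi^{*_h})$ used in the proof of Proposition \ref{prop: energy} simplifies to $f_t^*\nu=e(f_t)\,\mu$ pointwise on $S$. Thus bounding $\textrm{Ent}(f_t^*\nu)$ from above reduces to bounding $e(f_t)$ from below, uniformly in the harmonic metric $h_t$ of $(P,t\phi)$.

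Next, since $\phi$ is not nilpotent, there exists a $G$-invariant homogeneous polynomial $p$ on $\g$ of some degree $m\geq 1$ such that $q:=p(\phi)$ is a non-zero holomorphic section of $\mathcal{K}^{m}$. The key analytic input I would use, of the type developed in \cite{SS}, is the Lie-algebraic pointwise inequality
\begin{equation*}
    \nu\bigl(X,X^{*_h}\bigr)^{m}\;\geq\;c_p\,|p(X)|^{2}
\end{equation*}
valid for every $X\in\g$ and every Hermitian form $h$ on $\g$ induced by a reduction of $G$ to a maximal compact subgroup, with $c_p>0$ depending only on $p$. Because the ratio of the two sides is invariant under the simultaneous $G$-action $(X,h)\mapsto(gXg^{-1},g\cdot h)$, one may fix a reference $h_0$ and then argue by Kempf--Ness theory: the infimum of $\nu(\cdot,(\cdot)^{*_{h_0}})$ along the $G$-orbit of $X$ is realized by the semisimple part $X_s$ in the closed orbit (where $p(X_s)=p(X)$), and for semisimple $X_s$ the inequality reduces to a statement about eigenvalues that follows from homogeneity of $p$.

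Applying this pointwise with $X=t\phi(z)$ and $h=h_t$, and writing $\phi=\phi_0\,dz$ locally, we get $\nu(\phi_0,\phi_0^{*_{h_t}})^{m}\geq c_p|p(\phi_0)|^{2}$; dividing by $\mu(z)^{m}$ and using the homogeneity of both sides yields
\begin{equation*}
    e(f_t)^{m}\;\geq\;c_p\,|t|^{2m}\,|q|_\mu^{2},\qquad\text{hence}\qquad f_t^*\nu\;\geq\;c_p^{1/m}\,|t|^{2}\,g_0,
\end{equation*}
where $g_0:=|q|_\mu^{2/m}\mu$ is a fixed Riemannian metric on $S$, smooth and non-degenerate away from the (finite) zero set of $q$. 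By the monotonicity property of volume entropy,
\begin{equation*}
    \textrm{Ent}(f_t^*\nu)\;\leq\;\textrm{Ent}\bigl(c_p^{1/m}|t|^{2}g_0\bigr)\;=\;\bigl(c_p^{1/m}|t|^{2}\bigr)^{-1}\textrm{Ent}(g_0)\;\xrightarrow[|t|\to\infty]{}\;0,
\end{equation*}
where the equality uses the scaling property. The main obstacle is the Lie-algebraic inequality in the second paragraph: the content is that the constant $c_p$ can be chosen independently of $h$, which is precisely what allows the limit $|t|\to\infty$ to be controlled without detailed knowledge of how the harmonic metrics $h_t$ evolve. This is the place where I would invoke the results of \cite{SS}.
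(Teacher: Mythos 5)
Your proof is correct, and the overall architecture coincides with the paper's (produce a $t$-independent comparison metric bounded above by $c^{-1}|t|^{-2}f_t^*\nu$, then apply the monotonicity and scaling properties of entropy), but the key pointwise lemma is genuinely different. The paper works with the eigen-$1$-forms $\phi_1,\dots,\phi_m$ of $\mathrm{ad}(\phi)$, builds the cameral-cover metric $\mu_\phi=\sum_i k_i|\phi_i|^2$ of \cite{SS}, and proves $|\phi|_h^2\geq c\,\mu_\phi$ by a Schur decomposition in an $h$-orthonormal frame; you instead pick a single invariant homogeneous polynomial $p$ with $p(\phi)\not\equiv 0$ (which exists by non-nilpotency, since the nilpotent cone is the common zero locus of the positive-degree invariants) and use $\nu(X,X^{*_h})^m\geq c_p|p(X)|^2$, with comparison metric $|p(\phi)|_\mu^{2/m}\mu$. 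Your route avoids the discrete set where the number of eigen-$1$-forms drops and the extension of $\mu_\phi$ across it, and its key inequality is more elementary than you make it: since $p$ is homogeneous of degree $m$, $|p(X)|\leq C|X|_{h_0}^m$ holds for \emph{all} $X\in\g$ by compactness of the unit sphere, and $G$-invariance of $p$ together with conjugacy of Cartan involutions transfers the bound to every admissible $h$ with a uniform constant --- the Kempf--Ness reduction to the semisimple part is not needed. What the paper's version buys is a geometrically sharper comparison metric (the full spectral data rather than one symmetric function of it), which ties into the asymptotic picture of \cite{SS}; indeed, the paper notes that for Coxeter cyclic bundles with Hitchin basepoint $(0,\dots,0,q_l)$ one has $\mu_\phi$ proportional to $|q_l|^{2/l}$, which is exactly your $g_0$ in that case. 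Both arguments share the same implicit finiteness of the entropy of a metric with isolated degeneracies, so that is not a gap relative to the paper.
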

Note that $\nu(\phi,\phi)=0$ is equivalent to the harmonic map being weakly conformal.
\begin{proof}
    We work in the adjoint representation in order to locally view $\phi$ as an endomorphism valued $1$-form. There is a number $m\leq \textrm{dim}\g$ such that, around every point, $\phi$ has at most $m$ eigen-$1$-forms. Since $\phi$ is holomorphic, on the complement of a discrete set $B$ it locally has $m$ eigen-$1$-forms $\phi_1,\dots, \phi_m$, with multiplicities $k_1,\dots, k_m$. Since $\phi$ is not nilpotent, at least one $\phi_i$ is not identically zero. Note that the locally defined eigen-$1$-forms do not depend on the trivialization, but there is no canonical ordering. On a contractible open subset $U$ of $S-B,$ we define a possibly degenerate metric on $S$ by $$\mu_\phi|_U = \sum_{i=1}^m k_i|\phi_i|^2.$$ Since the $\phi_i$'s are holomorphic, the degenerate points of $\mu_\phi|_U$ are constrained to at most a discrete subset.
    Patching the $\mu_\phi|_U's$ across $S-B$ returns a (possibly degenerate) metric $\mu_\phi$. Since the $\phi_i$'s are holomorphic, $\mu_\phi$ extends smoothly across $B$. From a more geometric perspective, this metric can be interpreted as arising from the harmonic map from the cameral cover, as studied in \cite{SS}.

     If $h$ is the Hermitian metric on $\textrm{ad}P$ induced by the solution to Hitchin's equations, then, for a constant $c$ depending on our normalization of the Killing form $\nu$, $|\phi|_h^2 \geq c\mu_\phi$. Indeed, this follows by, over each point, choosing an $h$-orthonormal frame in which $\phi$ is upper triangular (that is, doing a Schur decomposition on $\phi$). Explicitly, $\phi=\phi_a+\phi_u,$ where $\phi_a$ is diagonal with entries coming from the $\phi_i$'s, and $\phi_u$ is strictly upper triangular. We simply note that $$|\phi|_h^2 = c\mu_{\phi}+|\phi_u|_h^2\geq c\mu_\phi.$$
    
Let $h_t$ be the Hermitian metric coming from the solution to Hitchin's equations at time $t$. Since each $f_t$ is weakly conformal, $f_t^*\nu = |t\phi|_{h_t}^2\geq ct^2\mu_\phi$. Thus, from property (1) and then property (2), $$\textrm{Ent}(f_t^*\nu)\leq \textrm{Ent}(ct^2\mu_\phi)=c^{-1}t^{-2}\textrm{Ent}(\mu_\phi)\to 0$$ as $t\to\infty$.
\end{proof}
One can certainly weaken the hypothesis $\nu(\phi,\phi)=0$. The main reason we include this condition is because it ensures that the harmonic maps are branched immersions, and hence volume entropy always makes sense. We don't need it, but it might be nice to know that for a Coxeter cyclic $G$-Higgs bundle $(P,\phi)$ with Hitchin basepoint $(0,\dots, 0, q_{l})$, $\mu_\phi$ is a scalar multiple of $|q_{l}|^{\frac{2}{l}}.$

\begin{proof}[Proof of Corollary B]
    By Proposition \ref{prop: entropy}, the entropy converges to zero as $|t|\to\infty$. For a weakly conformal map, the pullback metric is the energy density times the conformal metric. By property (1) above and Theorem B, the convergence is monotonic.
\end{proof}

\subsection{Taking $t\to 0$}\label{subsec: t to zero}
Let $(P,\phi)$ be any stable and simple $G$-Higgs bundle over a closed Riemann surface $S$. For each $t\in \C^*$, consider $(P,t\phi),$ let $h_t$ be the Hermitian metric on $\textrm{ad}P$ associated with the solution to Hitchin's equations, and let $f_t:\tilde{S}\to G/K$ be the equivariant harmonic map. Since the Hitchin fibration on the moduli space of $G$-Higgs bundles is proper (see \cite[Theorem 6.11 and Corollary 9.15]{Simpson_properness} and \cite[Theorem 8.1]{Hitchin:1986vp}), upon passing to a subsequence, we can find smooth gauge transformations $s_t$ such that, as $t\to 0,$ $(s_t^*P,ts_t^*\phi)$ converges in the $C^\infty$ sense to a polystable $G$-Higgs bundle $(P_0,\phi_0)$. The $G$-Higgs bundle $(P_0,\phi_0)$ is a $\C^*$-fixed point, and any other choice of subsequence and family of gauge transformations that makes everything converge will lead to a gauge equivalent $G$-Higgs bundle (see \cite[Corollary 9.20]{Simpson_properness}).

Applying the gauge transformation $s_t$ has the effect of conjugating the representation for which $f_t$ is equivariant and translating $f_t$ accordingly. Thus, intrinsic analytic quantities of the $f_t$'s, such as energy densities, are not affected. Alternatively, one can check that a gauge transformation does not alter $|t\phi|_{h_t,\mu}^2=e(f_t)$. It follows from these observations together with standard elliptic regularity results that $s_t^* h_t$ converges in the $C^\infty$ sense to a Hermitian metric $h_0$ that solves Hitchin's equations for $(P_0,\phi_0).$ The metric $h_0$ is in turn equivalent to an equivariant harmonic map $f_0: \tilde{S}\to G/K$, and $e(f_t)$ converges smoothly to $e(f_0)$.

Now, assume that $(P,\phi)$ is also Coxeter cyclic, and not a fixed point of the $\C^*$-action. That is, we're in the setting of Theorem B.
\begin{prop}\label{prop: t=0 case}
    In this setting, away from the zeros of $\phi,$ for all $t\in \C^*,$ 
    \begin{equation}\label{eq: e(f_t)>e(f_0)}
        e(f_t)>e(f_0).
    \end{equation}
Similarly, for all $t\in \C^*,$ $$\textrm{Ent}(f_t^*\nu)<\textrm{Ent}(f_0^*\nu).$$
\end{prop}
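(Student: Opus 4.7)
The plan is to deduce both statements from Theorem B combined with the $C^\infty$-convergence of harmonic data as $|t|\to 0$ guaranteed by properness of the Hitchin fibration. The key preliminary observation is that the root-energy decomposition passes to the limit: after the gauges $s_t$, the components $\phi_\alpha$ of the Higgs field $\phi=\sum_{\alpha\in\mathcal{Z}}\phi_\alpha$ and the restricted metrics $\mu_\alpha$ converge in $C^\infty$, so for each $\alpha\in\mathcal{Z}$ the root energy $e_\alpha(t)$ associated with $(P,t\phi)$ converges smoothly to a well-defined limit $e_\alpha(0)$ built from the data of $(P_0,\phi_0)$, and $e(f_t)\to e(f_0)$ smoothly.

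For the pointwise inequality (\ref{eq: e(f_t)>e(f_0)}), I would use a doubling trick. Fix $t\in\C^*$. For any $s$ with $0<|s|<|t|/2$, Lemma \ref{lm:ineq-alpha} applied twice yields $e_\alpha(s)<e_\alpha(t/2)<e_\alpha(t)$ pointwise away from the zero locus of $\phi_\alpha$. Letting $|s|\to 0$ in the first inequality gives $e_\alpha(0)\leq e_\alpha(t/2)$, which combined with the strict second inequality yields $e_\alpha(0)<e_\alpha(t)$ outside $\{\phi_\alpha=0\}$. Summing over $\alpha$ via Proposition \ref{prop: energy}, and noting that at any point $x$ with $\phi(x)\neq 0$ at least one $\phi_\alpha(x)\neq 0$ while every other root-energy difference is non-negative, I obtain $e(f_0)(x)<e(f_t)(x)$ at every point outside the zero set of $\phi$ itself.

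For the entropy statement, the Coxeter assumption (with Coxeter number $\geq 3$ for $G$ away from type $\textrm{A}_1$) ensures that $f_t$ and $f_0$ are weakly conformal, hence $f_t^*\nu=e(f_t)\mu$ and $f_0^*\nu=e(f_0)\mu$. Combining the scaling property $\textrm{Ent}(cg)=c^{-1}\textrm{Ent}(g)$ with anti-monotonicity, the strict inequality $\textrm{Ent}(f_t^*\nu)<\textrm{Ent}(f_0^*\nu)$ will follow once I produce a uniform constant $c>1$ with $e(f_t)\geq c\cdot e(f_0)$ everywhere on $S$, since then $\textrm{Ent}(f_t^*\nu)\leq\textrm{Ent}(c\,f_0^*\nu)=c^{-1}\textrm{Ent}(f_0^*\nu)<\textrm{Ent}(f_0^*\nu)$.

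The main obstacle will be producing this uniform $c$. By compactness of $S$, the issue reduces to controlling $e(f_t)/e(f_0)$ near the common zeros of $\phi$ and $\phi_0$, where both energies vanish to the same order. My strategy is to employ the doubling trick a second time, comparing instead $e(f_{t/2})/e(f_t)$: this ratio extends continuously across the zero locus, with the extended value at a zero determined by leading-order Taylor coefficients of the $\phi_\alpha$'s measured in the Hermitian metrics $h_t$ and $h_{t/2}$. The non-strict inequality $e(f_{t/2})/e(f_t)\leq 1$ is automatic by continuity from the strict inequality off the zero locus; the delicate residual task is to upgrade this to strict inequality at the zeros, which I expect to handle by invoking a strong maximum principle (in the spirit of Theorem \ref{thm: DL max}) on the linear system satisfied by the log-ratios $n_\alpha\log(e_\alpha(t)/e_\alpha(t/2))$ used in the proof of Lemma \ref{lm:ineq-alpha}. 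Compactness then yields the desired $c>1$, completing the proof.
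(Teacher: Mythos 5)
Your argument for the pointwise inequality $e(f_t)>e(f_0)$ is correct and is essentially the paper's: fix an intermediate parameter $s$ with $0<|s|<|t|$, use Theorem B (or Lemma \ref{lm:ineq-alpha}) to get a strict inequality against it, and pass to the limit in the other comparison so that only the non-strict version is needed there. The genuine difference is in the entropy statement. The paper disposes of it in one sentence ("as in Corollary B, \dots follows from property (1)"), which, read literally, only yields the non-strict inequality $\textrm{Ent}(f_t^*\nu)\leq\textrm{Ent}(f_0^*\nu)$; you are right that strictness requires a uniform multiplicative gap $e(f_t)\geq c\,e(f_0)$ with $c>1$, and your route to it is sound. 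It is also simpler than you anticipate: the ``delicate residual task'' at the zeros is already settled by the proof of Lemma \ref{lm:ineq-alpha}, which observes that each $n_\alpha\log\bigl(e_\alpha(t)/e_\alpha(s)\bigr)$ is a smooth function on all of $S$ (the divisors of $e_\alpha(t)$ and $e_\alpha(s)$ both equal the divisor of $\phi_\alpha$) and is strictly positive everywhere by condition (3) of Theorem \ref{thm: DL max}. Compactness of $S$ then gives $e_\alpha(t)\geq c_\alpha\,e_\alpha(s)$ with $c_\alpha>1$, hence $e(f_t)\geq c\,e(f_s)\geq c\,e(f_0)$ with $c=\min_\alpha c_\alpha>1$, and the scaling and anti-monotonicity properties of entropy finish the argument exactly as you describe; no new maximum-principle analysis of Taylor coefficients is needed. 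So your proposal is complete and, on the entropy half, makes explicit a step the paper leaves implicit.
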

\begin{proof}
    By Theorem B, for all $|s|<|t|,$ away from the zeros of $\phi,$ $e(f_t)>e(f_s).$ Since, away from the zeros, $e(f_s)$ monotonically decreases as $|s|\to 0$, the inequality stays strict in the limit, and (\ref{eq: e(f_t)>e(f_0)}) follows. As in Corollary B, the entropy inequality follows from property (1) in the previous subsection.
\end{proof}

Using Remark \ref{rem: constructing gauge transformations}, one can often build explicit gauge transformations. Let $(P,\phi)$ be a stable and simple Coxeter cyclic $G$-Higgs bundle over a closed Riemann surface $S$, which determines line bundles $\{L_\alpha:\alpha\in\mathcal{Z}\}$ and holomorphic sections $\phi_\alpha\in H^0(S,L_\alpha\otimes \mathcal{K}).$ Choose a non-empty subset $I\subset \mathcal{Z}$ and choose non-negative real numbers $k_\alpha$ such that $\sum_{\alpha\in I} n_\alpha=\sum_{\alpha\not \in I} n_\alpha k_\alpha$. By Remark \ref{rem: constructing gauge transformations}, we can find a family of holomorphic gauge transformations $s_t$ of $P$, $t\in \C^*$, such that $s_t^*\phi$ is described by $t^{-1}\phi_\alpha$ for $\alpha \not \in I$ and $t^{k_\alpha} \phi_\alpha$ for $\alpha\in I$ (when $k_\alpha$ is not an integer, we make a choice of $t^{k_\alpha}$). The main case to have in mind is $I=\{-\delta\}$, so that for $\alpha\in \Pi,$ $\phi_\alpha$ becomes $t\phi_\alpha$, and $\phi_{-\delta}$ must be sent to $t^{r}\phi_{-\delta}$. In the general situation, when we take $t\to 0$ along the sequence $(P,s_t^* t\phi),$ the limit is a Coxeter cyclic $G$-Higgs bundle $(P,\phi_0)$, which determines the same line bundles $L_\alpha$ but with sections $(\phi_0)_\alpha=\phi_\alpha$ for $\alpha\in I$ and $(\phi_0)_\alpha=0$ for $\alpha\not \in I$. If $(P,\phi_0)$ is polystable, which, as pointed out in Remark \ref{rem: no phi_i zero}, can be checked using results in \cite[\S 5]{Mcintosh}, then we can declare $s_t$ to be our family of gauge transformations and $(P,\phi_0)$ to be our limit. Note that we can see explicitly that $(P,\phi_0)$ is a fixed point of the $\C^*$-action: $s_t^{-1}$ transforms $(P,\phi_0)$ to $(P,t\phi_0)$.

Finally, let us write out an explicit example. Let $(P,\phi)$, $L_\alpha$, and $\phi_\alpha$ be as above. We assume that, for all $\alpha\in\Pi,$ $\phi_\alpha$ is not the zero section.  Consider the other Coxeter cyclic $G$-Higgs bundle $(P,\phi_0)$ determined by the same line bundles $L_\alpha$, $\alpha\in\mathcal{Z}$, and, for $\alpha\in \Pi,$ the same sections $\phi_\alpha$, and for $\alpha=-\delta,$ the zero section. For convenience, choose an ordering of the simple roots, $\Pi=\{\alpha_1,\dots, \alpha_l\}$, and set $L_i=L_{\alpha_i}$. Let $d_i$ to be the degree of $L_i$. From Proposition 5.4 in \cite{Mcintosh} and Proposition 5.7 in \cite{GPRi}, $(P,\phi_0)$ is polystable if and only if, for every $i,$ 
\begin{equation}\label{eq: polystability condition}
    2-2g\leq d_i, \hspace{1mm} \sum_{j=1}^lR_{ij}d_j<0,
\end{equation}
where $(R_{ij})_{i,j=1}^l$ is the inverse of the matrix whose entries are $\nu(\alpha_i,\alpha_j).$ For each $t\in \C^*$, let $f_t$ be the harmonic map associated with $(P,t\phi)$. As above, we can find gauge transformations $s_t$ such that $s_t^*\phi$ converges to $\phi_0.$ If $h_t$ is the Hermitian metric obtained from solving Hitchin's equations for $(P,t\phi),$ then $s_t^*h_t$ converges to a Hermitian metric for $(P,\phi_0)$, which gives us a harmonic map $f_0.$ Note that, because $(P,\phi_0)$ might be strictly polystable, it might not have a unique solution to Hitchin's equations. The energy densities of all of the different harmonic maps are the same, since they can all be obtained from each other by translating in a flat (this can be seen through \cite{Co}). Proposition \ref{prop: t=0 case} then applies for $(P,t\phi)$, $t\in \C^*$, and $(P,\phi_0)$. We summarize in the theorem below.
\begin{thm}\label{thm: domination result}
Let $(P,\phi)$ be a stable and simple Coxeter cyclic $G$-Higgs bundle whose structure data consists of line bundles $\{L_\alpha:\alpha\in\mathcal{Z}\}$ and holomorphic sections  $\phi_\alpha\in H^0(S,L_\alpha\otimes \mathcal{K}).$ Let $(P,\phi_0)$ be the polystable $\C^*$-fixed point that determines the same line bundles and whose associated sections are $\{\phi_\alpha: \alpha \in \Pi\}$ and the zero section of $L_{-\delta}\otimes \mathcal{K}$.

We assume, after choosing an ordering of the simple roots, that $(P,\phi_0)$ is chosen so that (\ref{eq: polystability condition}) holds. For each $t\in \C^*,$ let $f_t$ be the harmonic map corresponding to $(P,t\phi),$ and for $t=0,$ let $f_0$ be any harmonic map corresponding to $(P,\phi_0).$ Then, for all $t\in \C^*,$ away from the zeros of $\phi,$ $$e(f_t)>e(f_0).$$ As well, $\textrm{Ent}(f_t^*\nu)<\textrm{Ent}(f_0^*\nu)$.
\end{thm}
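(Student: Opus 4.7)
The plan is to realize $(P,\phi_0)$ as a smooth gauge-theoretic limit of the family $(P,t\phi)$ as $t\to 0$, so that Proposition \ref{prop: t=0 case} can be invoked directly. First I would construct an explicit family of holomorphic gauge transformations using Remark \ref{rem: constructing gauge transformations}: take $s_t$ to act by multiplication by $t^{-1}$ on $L_\alpha$ for each $\alpha\in\Pi$ and by $t^{r-1}$ on $L_{-\delta}$. Since $n_{-\delta}=1$ and $\sum_{\alpha\in\Pi}n_\alpha=r-1$, the normalization condition $\prod_{\alpha\in\mathcal{Z}}t_\alpha^{n_\alpha}=1$ holds, so this is a valid holomorphic gauge transformation of $P$. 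Applied to $(P,t\phi)$, the resulting Higgs field is $\sum_{\alpha\in\Pi}\phi_\alpha+t^r\phi_{-\delta}$, which converges smoothly to $\phi_0$ as $t\to 0$.

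Next, condition (\ref{eq: polystability condition}) guarantees that $(P,\phi_0)$ is polystable, and $(P,\phi_0)$ is a $\C^*$-fixed point: since $(\phi_0)_{-\delta}=0$, rescaling each $L_\alpha$ with $\alpha\in\Pi$ by a common factor $\lambda\in\C^*$ and $L_{-\delta}$ by $\lambda^{1-r}$ preserves the tensor product trivialization and yields a holomorphic gauge transformation taking $\phi_0$ to $\lambda\phi_0$. The discussion preceding the theorem, which applies the properness of the Hitchin fibration from \cite{Simpson_properness} together with uniqueness up to gauge of polystable limits, then yields $C^\infty$-convergence of $s_t^*h_t$ to a Hermitian metric $h_0$ that solves Hitchin's equations for $(P,\phi_0)$, and any equivariant harmonic map $f_0$ associated to $h_0$ satisfies $e(f_t)\to e(f_0)$ smoothly on $S$. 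When $(P,\phi_0)$ is strictly polystable, $h_0$ need not be unique, but by Corlette's construction \cite{Co} any two choices differ by translation in a flat in the symmetric space, so $e(f_0)$ is unambiguous.

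Finally, with the limit identified, the same argument as in the proof of Proposition \ref{prop: t=0 case} gives the strict inequalities. For any $0<|s|<|t|$, Theorem B applied to the pair $(t,s)$ yields $e(f_t)>e(f_s)$; taking $|s|\to 0$ and using monotone convergence preserves strict positivity of the gap, yielding $e(f_t)>e(f_0)$ away from the zeros of $\phi$. The entropy statement follows by property (1) of volume entropy recalled at the start of \S 5.2. The main obstacle I foresee is verifying the $C^\infty$-convergence of $s_t^*h_t$ in the strictly polystable regime, where the non-uniqueness of $h_0$ must be handled by observing that the energy density is invariant across the gauge ambiguity; everything else is a direct appeal to the machinery assembled in the preceding subsection.
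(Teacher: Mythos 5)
Your proposal is correct and follows essentially the same route as the paper: the explicit gauge transformations $s_t$ built from Remark \ref{rem: constructing gauge transformations} (with the same exponents $t^{-1}$ on $L_\alpha$, $\alpha\in\Pi$, and $t^{r-1}$ on $L_{-\delta}$, so that $s_t^*(t\phi)\to\phi_0$), polystability via (\ref{eq: polystability condition}), smooth convergence of $s_t^*h_t$ with the strictly polystable ambiguity resolved by Corlette's flat-translation observation, and then Proposition \ref{prop: t=0 case}. This matches the paper's own argument in \S\ref{subsec: t to zero}.
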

When $G=\textrm{SL}(n,\C)$, under the defining representation to $\C^n$, $(P,\phi)$ and $(P,\phi_0)$ return Higgs bundles from \cite{Dai2018}, and the analogue of (\ref{eq: polystability condition}) is essentially \cite[Proposition 2.4]{Dai2018}. Theorem \ref{thm: domination result} recovers Theorem 1.3 from \cite{Dai2018}.
 
\section{Extrinsic curvature}\label{sec: curvature}
Here we study extrinsic curvature of harmonic maps and in particular prove Theorems C and C'. Since the $G$-Higgs bundles in the Hitchin section are $G^{\R}$-Higgs bundles, for every case except $\textrm{B}_n$ and $\textrm{D}_n$, Theorem C becomes a special case of Theorem C'.
\subsection{Curvature}\label{subsec: curvature}Let $(P,\phi)$ be a stable $G$-Higgs bundle on a closed surface $S$ of genus $g\geq 2$ with harmonic map $f:\tilde{S}\to G/K$ and harmonic metric $h$. As before, fix a conformal metric $\mu$ on $S$ with area form $\Omega_\mu$, with respect to which we define the root energies $e_\alpha$. Identifying the tangent bundle of $G/K$ with $G/K\times \mathfrak{p}$, the curvature tensor of the Levi-Civita connection is given by $$R(X,Y)Z=-[[X,Y],Z].$$ It follows that the sectional curvature $K_\nu(f_*T\tilde{S})$ of the tangent plane of the harmonic map is
\begin{equation}\label{eq: K formula}
    K_\nu(f_*T\tilde{S}) = -\frac{\nu([\phi,\phi^{*_h}],[\phi,\phi^{*_h}])}{|\phi|_h^2 -|\nu(\phi,\phi)|^2}
\end{equation}
(see, e.g., \cite[pp 16]{Dai2018}).
Since $[\phi,\phi^{*_h}]$ is $h$-self-adjoint, $\nu([\phi,\phi^{*_h}],[\phi,\phi^{*_h}])\geq 0$, and hence the harmonic map is tangent to a flat precisely when $\nu([\phi,\phi^{*_h}],[\phi,\phi^{*_h}])=0.$

Passing to an $\textrm{ad}G$-Higgs bundle, the harmonic map is transported to the $\textrm{ad}G$-symmetric space, with no effect on the sectional curvature. Hence, when $\phi$ is Coxeter cyclic, we see by (\ref{eq: submain-phiphi*}) that
\begin{equation}
    [\phi, \phi^{*_h}]=-2i\Omega_\mu\sum_{\alpha\in\mathcal{Z}} h_\alpha \frac{\nu(\alpha,\alpha)}{2} e_\alpha. 
\end{equation}
Thus,
\begin{equation}\label{eq:curv-sign}
    \abs{\Omega_\mu^{-1}[\phi,\phi^{*_h}]}_\nu^2=4\sum_{\alpha,\beta\in\mathcal{Z}}e_\alpha e_\beta \nu(\alpha, \beta)=4\abs{\sum_{\alpha\in\mathcal{Z}} e_\alpha \alpha}_\nu^2.
\end{equation}
In particular, the curvature $K_\nu(f_*T\tilde{S})$ vanishes if and only if $\sum_{\alpha\in\mathcal{Z}} e_\alpha\alpha=0$. That is, if and only if $e_\alpha=Cn_\alpha$ for some $C\geq 0$ and all $\alpha\in\mathcal{Z}$. Or, if we consider $\tilde{e}_\alpha=e_\alpha\nu(\alpha,\alpha),$ the condition is  $\tilde{e}_\alpha=Cn_\alpha\nu(\alpha,\alpha)$ for some $C\geq 0$.
\begin{remark}\label{rem: products1}
    If $(P,\phi)$ is a direct sum of Higgs bundles $(P_i,\phi_i)$, which occurs for example if we're working with a semisimple group $G=\prod_i G_i$, then any harmonic metric preserves the splitting and hence the numerator in (\ref{eq: K formula}) splits as a sum $-\sum_i\nu([\phi_i,\phi_i^{*_h}],[\phi_i,\phi_i^{*_h}])$. Consequently, once we know Theorem C, we see that the extrinsic curvature is negative as long as one of the Higgs bundles is Hitchin and Coxeter cyclic. 
\end{remark}
\begin{remark}\label{rem: products2}
    Concerning the remark above, one interesting case is $G=\textrm{PSL}(2,\R)^n.$ For each factor, the expression $\nu([\phi, \phi^{*_h}], [\phi, \phi^{*_h}])$ is zero only at points where the harmonic map is not an immersion, so in particular if one factor comes from a Fuchsian representation, then the curvature is negative. We deduce that the analogue of the conjecture from \cite{DL} holds for products of components of the $\textrm{PSL}(2,\R)$ character variety such that at least one factor is Fuchsian. Moreover, we can see that a harmonic map to $(\mathbb{H}^2)^n$ is tangent to a flat exactly when the singular sets of all the component harmonic maps intersect.
\end{remark}
Before moving to Theorems C and C', we address negative curvature for $\C^*$-fixed points. If $(P,\phi)$ is a $\C^*$-fixed point, then $\phi$ is nilpotent, as can be seen using the Hitchin fibration. We then observe that any harmonic map arising from a nilpotent $G$-Higgs bundle has negative extrinsic curvature at immersed points. This observation follows from (\ref{eq: K formula}) and the fact that no non-zero nilpotent endomorphism of a Hermitian vector space commutes with its adjoint (in fact, there is a well-known comparison between the norm of the endomorphism and the norm of the commutator with the adjoint, see \cite[Lemma 3.5]{LM1}).
\subsection{Previous results}\label{sec: previous results}
We can deduce a portion of Theorem C from the results in \cite{DL}. A representation $G\to \textrm{SL}(n,\C)$ with discrete kernel induces an isometric and totally geodesic embedding of symmetric spaces. So it suffices to prove negative curvature for the harmonic map to $\textrm{SL}(n,\C)/\textrm{SU}(n)$ obtained via the map to $G/K$ and such a representation.

Recall that Hitchin representations are defined for groups of adjoint type, but that the representations lift to universal covers (in rank at least $2$). To put ourselves in the setting of \cite{DL}, we lift to covers so that the complex groups corresponding to $\mathrm{A}_n$, $\mathrm{B}_n$, and $\mathrm{C}_n$ are $\textrm{SL}(n+1,\C)$, $\textrm{SO}(2n+1,\C)$, and $\textrm{Sp}(2n,\C)$ respectively, and so that $\mathrm{G}_2$ is a subgroup of $\textrm{SO}(7,\C)$. Let $(P,\phi)$ be a $G$-Higgs bundle and $\sigma: G\to \textrm{SL}(n,\C)$ a representation, inducing an ordinary Higgs bundle $(E,\phi_\sigma).$ If $\sigma$ takes principal $\mathfrak{sl}(2,\C)$'s in the Lie algebra $\g$ to principal $\mathfrak{sl}(2,\C)$'s in $\mathfrak{sl}(n,\C),$ then it sends the Hitchin section for $G$ of \S 3.6 (suitably lifted) to the Hitchin section for $\textrm{SL}(n,\C)$, up to an automorphism of the moduli space $\mathcal{M}_S(G)$. If $\sigma$ is the standard defining representation on a complex vector space, then this is the case for groups of type $\textrm{A}_n,\textrm{B}_n,\textrm{C}_n$, and $\textrm{G}_2$. As well, if $(P,\phi)$ is cyclic via a gauge transformation $s$ of $P,$ then $s$ induces a gauge transformation of $E$ that makes $(E,\phi_\sigma)$ cyclic.

The authors of \cite{DL} prove Theorem C for the linear Higgs bundles corresponding to Coxeter cyclic $G$-Higgs bundles for $G=\textrm{SL}(n,\C)$. Since the Coxeter numbers for groups of type $\textrm{A}_n$ and $\textrm{C}_n$ are $n+1$ and $2n$ respectively, we see that if $(P,\phi)$ above is Coxeter cyclic, then so is $(E,\phi_\sigma).$ Thus, both the $\textrm{A}_n$ and the $\textrm{C}_n$ cases follows from \cite{DL}.

In \cite{DL}, the authors refer to the $\textrm{SL}(n,\C)$-Higgs bundles in the Hitchin section associated with $(0,\dots, 0, q,0)\in\oplus_{i=2}^n H^0(S,\mathcal{K}^i)$ as sub-cyclic Higgs bundles, and they prove their negative curvature conjecture for these Higgs bundles. For Lie groups of type $\textrm{B}_n$ and $\textrm{G}_2,$ such as $\textrm{SO}(2n+1,\C)$ and $\textrm{G}_2$ itself, under the standard linear representation $\sigma$ to $\textrm{SL}(m,\C)$, where $m$ is $2n+1$ in the first case and $7$ in the latter, a Coxeter cyclic $G$-Higgs bundle $(P,\phi)$ becomes a sub-cyclic Higgs bundle $(E,\phi_\sigma)$. Indeed, the sub-cyclic Higgs bundles in the Hitchin component are characterized by being in invariant subspaces of a holomorphic gauge transformation of order $m-1.$ To realize our claim, one just observes that the Coxeter number for the root system of $\textrm{B}_n$ is $2n,$ and for $\textrm{G}_2$ it is $6$, which is $m-1$. Thus, the $\textrm{B}_n$ and $\textrm{G}_2$ cases follow as well.

In the work below we will include the proofs of the $\textrm{A}_n,\textrm{B}_n,\textrm{C}_n$ and $\textrm{G}_2$ cases, using our perspective.
\subsection{Cases $\mathrm{B}_n$ and $\mathrm{D}_n$}
 Assuming $G$ is either of type $\mathrm{B}_n$ or $\mathrm{D}_n$, we prove Theorem C. We label a subset of the affine Dynkin diagram as in Figure \ref{fig:bd}, where $\delta$ is the highest root, and here we label the corresponding functions as $e_{\alpha},$ $e_{\beta},$ and $e_{-\delta}.$  
\par Writing (\ref{eq:bochner-toda}) for this subset of the diagram (which agrees with (\ref{eq:bochner-toda-variant}) if we normalize the Killing form properly), we see that
\begin{align*}
    \frac{1}{2}\Delta_\mu\log e_\alpha=2e_\alpha-e_\beta-K_\mu, \\
    \frac{1}{2}\Delta_\mu\log e_{-\delta}=2e_{-\delta}-e_{\beta}-K_\mu.
\end{align*}
Subtracting these two equations, we get $\Delta_\mu\log\frac{e_\alpha}{e_{-\delta}}=4e_\alpha(1-\frac{e_{-\delta}}{e_\alpha})$. It can be seen from \S 3.6 that, assuming our $G$-Higgs bundle is in the Hitchin section, $e_\alpha>0$ and that $e_{-\delta}\geq 0$ has zeros. Thus the function $\frac{e_\alpha}{e_{-\delta}}$ achieves a minimum on $S$, and by the maximum principle we see that $e_\alpha>e_{-\delta}$. \par However, in both $\mathrm{B}_n$ and $\mathrm{D}_n$ cases, we have $n_\alpha=n_{-\delta}=1$, so in particular $(e_\alpha, e_{-\delta})$ cannot be proportional to $(n_\alpha,n_{-\delta})$. 
\begin{figure}
\dynkin[backwards,labels={,\beta,\alpha,-\delta},scale=1.5]D{***o}\hspace{1mm}$\cdots$
\caption{Root labels for subsets of the extended Dynkin diagrams of type $\mathrm{B_n,D_n}$.}\label{fig:bd}
\end{figure}

\begin{remark}\label{rem: BDextension}
    The argument works for plenty of Coxeter cyclic $G$-Higgs bundles that are not in the Hitchin section. We just need that the divisor of $e_{\alpha}$ is strictly contained in that of $e_{-\delta}$ (note that $\frac{e_\alpha}{e_{-\delta}}$ extends smoothly over zeros of the same multiplicity).
\end{remark}
\begin{remark}
    For $\textrm{B}_n$, the inequality $e_\alpha>e_{-\delta}$ is contained in Lemma 5.4 of \cite{DL}. In fact, that lemma shows more inequalities: $e_\alpha+e_{-\delta}<e_\beta,$ and for any $\beta'$ to the right of $\beta,$ $e_\beta < e_{\beta'}.$ Interestingly, the authors transform the system into a subsystem that involves the term $e_\alpha+e_{-\delta}$, using the tricks $$\Delta_\mu \log (e^f + e^g) = \Delta_\mu \log (e^{f-g}+1) + \Delta_\mu \log e^g$$ and $$\Delta_\mu \log (e^f +1)\geq \frac{e^f}{e^f+1}\Delta_\mu f.$$ 
\end{remark}

\subsection{Theorem C' and the rest of Theorem C}\label{sec: thms C and C'} We first explain the set-up for Theorem C'. If $G$ is of type $\textrm{C}_n,$ $\textrm{F}_4,$ or $\textrm{G}_2$, then the extended Dynkin diagram is an undirected path (with multiple arrows). We label $\tilde{e}_{-\delta}=\tilde{e}_0$, and then for $\alpha\in \Pi,$ we label $\tilde{e}_{\alpha}=\tilde{e}_j$ if $\alpha$ is separated from the lowest root $-\delta$ by $j$ nodes on the extended Dynkin diagram. We can thus write $\{\tilde{e}_\alpha\}_{\alpha\in \mathcal{Z}}=\{\tilde{e}_0,\dots, \tilde{e}_{\ell}\},$ where $\ell=|\mathcal{Z}|.$ 

For Lie groups of type $\textrm{A}_n$ or $\textrm{E}_6$, we consider $G^{\R}$-Higgs bundles. Then, as we mentioned in \S \ref{subsubsec:bochner-toda}, if $\sigma_0$ is the involution from \S \ref{sec: G^R higgs}, we have $\tilde{e}_\alpha = \tilde{e}_{\sigma_0(\alpha)}$ for all $\alpha.$ Note that, when acting on the roots, $\sigma_0$ fixes $-\delta$. We explain below that the Bochner-Toda equations then reduce to equations that are specified by a different Dynkin diagram that is a path. Each node on the diagram is labelled by a function $\tilde{e}_\alpha$, not counting repetitions (that is, $\tilde{e}_\alpha$ and $\tilde{e}_{\sigma_0(\alpha)}$ label the same node). Cutting out the repetitions, the set of functions $\{\tilde{e}_\alpha\}_{\alpha\in \mathcal{Z}}$ can be written $\{\tilde{e}_0,\dots, \tilde{e}_{\ell}\},$ $\ell<|\mathcal{Z}|$ (except, for $\textrm{A}_1$, $\ell=|\mathcal{Z}|$), where, exactly as above, $\tilde{e}_0=\tilde{e}_{-\delta}$, and the other $\tilde{e}_j$'s are labeled by their position relative to $\tilde{e}_0$. 

We assume that $(P,\phi)$ is not a $\C^*$-fixed point. Then, for each $j$, the vanishing locus of $\tilde{e}_j$ coincides with that of a non-zero holomorphic section of a holomorphic line bundle, namely, a corresponding $\phi_\alpha$ from Proposition \ref{prop: structure}. Therefore, $\tilde{e}_j$ has a divisor $D(\tilde{e}_j).$ In the language of the statement of Theorem C', $D_j:= D(\tilde{e}_j).$ We will show the following.
\begin{prop}\label{prop:delta-smallest}
    Suppose that $G$ is of type $ \textrm{C}_n,\textrm{F}_4,$ or $\textrm{G}_2$ and that $(P,\phi)$ is a Coxeter cyclic $G$-Higgs bundle, or that $G$ is of type $\textrm{A}_n$ or $\textrm{E}_6$, and that $(P,\phi)$ is a Coxeter cyclic $G^{\R}$-Higgs bundle. We assume that neither of the two $G$-Higgs bundles are $\C^*$-fixed points. Suppose that
    $$D(\tilde{e}_0)>D(\tilde{e}_1) \geq \dots \geq D(\tilde{e}_{\ell}) = 0.$$
    Then,
    \begin{align*}
        \tilde{e}_{-\delta}<\frac{\tilde{e}_\alpha}{n_\alpha}\text{ pointwise, for all }\alpha\in \Pi.
    \end{align*}
\end{prop}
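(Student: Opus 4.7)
My plan is to reduce the proposition to an application of Dai-Li's maximum principle (Theorem~\ref{thm: DL max}), using the generalizations in the appendix (Theorems~\ref{thm:mp} and~\ref{thm:dai-li-gen}) where necessary, on an elliptic system derived from the variant Bochner-Toda equations (\ref{eq:bochner-toda-variant}). I label the nodes of the (possibly $\sigma_0$-folded, in the $\textrm{A}_n$ and $\textrm{E}_6$ cases) path-shaped extended Dynkin diagram as $0,1,\ldots,\ell$, with $j=0$ corresponding to $-\delta$, and set $w_j = \log\bigl(\tilde{e}_j/(n_j \tilde{e}_0)\bigr)$ for $j = 1,\ldots,\ell$, so that the proposition is equivalent to $w_j > 0$ everywhere on $S$ for every $j \geq 1$. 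The divisor hypothesis $D_0 > D_1$ guarantees that $\mathrm{supp}(D_0) \setminus \mathrm{supp}(D_1)$ is non-empty and that $w_1 \to +\infty$ on this set, providing the non-empty pole set required for case~(2) of Theorem~\ref{thm: DL max}.

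Subtracting the Bochner-Toda equation for $-\delta$ from the one at each $j$, substituting $\tilde{e}_k = n_k \tilde{e}_0 e^{w_k}$, and writing $e^{w_k}-1 = w_k g_k$ with $g_k > 0$, I obtain an elliptic system of the form
\begin{align*}
\Delta_\mu w_j \;=\; F_j \;+\; \sum_{k \neq 0} c_{jk}\, w_k,
\end{align*}
in which the coefficients $c_{jk}$ and the inhomogeneity $F_j$ are explicit in the entries $a_{jk}$ of the affine Cartan matrix, the marks $n_k$, and $\tilde{e}_0$. Full coupling is automatic from the connectedness of the path; positivity of the diagonal $c_{jj}$ follows from Proposition~\ref{prop: angle relations} (the dominance of $\delta$); and the null relation $\sum_\alpha n_\alpha a_{\alpha\beta} = 0$ is what controls the column sums.

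The main obstacle will be to verify the off-diagonal cooperativity ($c_{jk} \leq 0$ for $j \neq k$) and the sign of $F_j$, either of which can fail at isolated nodes for the naive choice of weights when the underlying diagram is non-simply-laced (as in $\textrm{C}_n$, $\textrm{F}_4$, and $\textrm{G}_2$). I would resolve this by exploiting the path structure in two complementary ways: first, telescoping or reweighting the variables node-by-node so that the null vector of the affine Cartan matrix annihilates the constant term $F_j$; second, invoking the combinatorial maximum principle from the appendix, which is designed to handle chain-like systems where the classical cooperative hypothesis holds only locally. With the hypotheses of the maximum principle in place, case~(2) applied to the non-empty pole set at $w_1$ yields $w_j > 0$ on $S$ for every $j$, which is the desired inequality $\tilde{e}_{-\delta} < \tilde{e}_\alpha/n_\alpha$.
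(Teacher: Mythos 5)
Your high-level strategy coincides with the paper's: derive a linear elliptic system in log-ratios from (\ref{eq:bochner-toda-variant}), verify the cooperative/column-diagonally-dominant/fully-coupled hypotheses, use $D(\tilde e_0)>D(\tilde e_1)$ to produce a non-empty pole set, and invoke case (2) of Theorem \ref{thm: DL max}. But the specific system you write down does not satisfy the hypotheses, and the step you defer (``telescoping or reweighting'') is precisely the content of the paper's argument. Concretely, set $f_j=\tilde e_j/n_j$ and $B_{jk}=A_{jk}n_k$, so that $\tfrac12\Delta_\mu\log f_j=\sum_k B_{jk}f_k-1$ with vanishing row sums and with $B_{jk}<0$ exactly when $j\neq k$ are adjacent on the path. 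Your variables $w_j=\log(f_j/f_0)$ then satisfy $\tfrac12\Delta_\mu w_j=\sum_{k\ge1}(B_{jk}-B_{0k})f_0g_kw_k$ (the inhomogeneity is identically zero once you use $\sum_kB_{jk}=0$, so there is no ``sign of $F_j$'' to worry about). The coefficient matrix $c_{jk}=2(B_{jk}-B_{0k})f_0g_k$ is \emph{not} cooperative: since $B_{01}<0$ while $B_{0k}=0$ for $k\ge2$, every row with $j\ge 3$ acquires the positive off-diagonal entry $c_{j1}=-2B_{01}f_0g_1>0$. This failure has nothing to do with non-simply-lacedness; it occurs for any path with at least three non-base nodes (so already for $\mathrm C_n$ with $n\ge3$, for $\mathrm F_4$, and for the folded $\mathrm A_n$ and $\mathrm E_6$ diagrams), because subtracting the equation at the node $-\delta$ contaminates the $k=1$ column in \emph{every} row. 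Your appeal to the appendix does not close this gap: Theorem \ref{thm:mp} as stated requires $u$ smooth on all of $X$ and so does not accommodate the pole set your argument relies on, and you do not exhibit the directed graph and weight functions it would require.

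The missing idea is to compare \emph{consecutive} nodes along the path rather than comparing every node to the base node. The paper's Lemma \ref{lm:order} sets $v_i=\log(f_{i+1}/f_i)$; writing $B_{i,i+1}=-U_i$, $B_{i,i-1}=-L_i$ and using the zero row sums, one computes $\Delta_\mu v=2CDv$ with $C$ tridiagonal, cooperative, fully coupled, and with \emph{identically vanishing} column sums, while $D$ is the positive diagonal matrix with entries $(f_{i+1}-f_i)/(\log f_{i+1}-\log f_i)$. The hypothesis $D(\tilde e_0)>D(\tilde e_1)$ forces $v_1\to+\infty$ on the non-empty zero set of $\tilde e_0$, case (2) of Theorem \ref{thm: DL max} gives $v_i>0$, and the telescoping identity $\log(f_j/f_0)=\sum_{i<j}v_i$ recovers your $w_j>0$. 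Two smaller corrections: the positivity of the diagonal comes simply from $a_{\alpha\alpha}=2$, not from the dominance of $\delta$ (which controls the off-diagonal entries in the row and column of $-\delta$); and in the $\mathrm A_n$, $\mathrm E_6$ cases you must justify that the folded system is again of this form, with null vector obtained by identifying the marks $n_\alpha$ along $\sigma_0$ (and with the rescaling $\tilde e_\gamma'=\tilde e_\gamma/2$, $n_\gamma'=n_\gamma/2$ in the $\mathrm A_{2n}$ case), as is done in \S\ref{subsubsec:odd-a-e6}--\ref{subsubsec:even-a}.
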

As can be read off of the extended Dynkin diagrams, $\nu(\delta,\delta)\geq \nu(\alpha,\alpha)$ for all $\alpha\in \Pi.$ Thus, $$\frac{\tilde{e}_{-\delta}}{\nu(\delta,\delta)}< \frac{\tilde{e}_\alpha}{n_\alpha\nu(\alpha,\alpha)}$$ pointwise, for all $\alpha\in \Pi$. Hence, by (\ref{eq:curv-sign}), Proposition \ref{prop:delta-smallest} immediately implies Theorem C'. If $(P,\phi)$ is in the Hitchin section, then it is a $G^{\R}$-Higgs bundle, and $D(\tilde{e}_0)$ is the divisor of the holomorphic differential that we obtain via the Hitchin fibration, while $D(\tilde{e}_1)=\dots = D(\tilde{e}_{\ell}) =0.$ Thus, the remaining cases in Theorem C represent a particular example that falls into Theorem C'. In the remainder of this section, we explain how the equations reduce for types $\textrm{A}_n$ and $\textrm{E}_6,$ and then we give the proof of Proposition \ref{prop:delta-smallest}.

We begin with the equations. Pictorially, as also explained in \cite[\S 2.2.4]{B}, in the cases $\textrm{A}_n$ and $\textrm{E}_6$, the involution $\sigma_0$ encodes a folding symmetry of the (ordinary) Dynkin diagram, shown in the second column of Table \ref{table:fold}. 
In this second column, we've drawn the Dynkin diagram with nodes that are coloured black, and then shown how to complete to the extended Dynkin diagram via adjoining a node (not coloured black) corresponding to the lowest root $-\delta$. 
For each diagram in this second column, energies of roots connected by grey lines are equal. By collapsing the Dynkin diagram and then adding a node for the lowest root, we get a different (variant) Bochner--Toda system associated with a generalized Cartan matrix of affine type. Such matrices correspond to Dynkin diagrams of affine type, shown in the final column of Table \ref{table:fold}. The third column of Table \ref{table:fold} shows the names of these diagrams from the list of affine Dynkin diagrams in \cite[\S 15.20, pp. 354]{Carter2005}.


\par After this collapse, for all of the cases of Theorem C', Hitchin's equations indeed reduce to Bochner--Toda systems associated with affine Dynkin diagrams that are undirected paths (with multiple arrows). We show the relevant analytic result for such systems in \S\ref{subsubsec:bt-line}. In \S\ref{subsubsec:odd-a-e6} and \S\ref{subsubsec:even-a}, we explain how to obtain the diagrams in the final column of Table \ref{table:fold} from the diagrams in the second column.

\begin{table}
    \begin{tabular}{ c|c|c|c } 
     Type & Diagram & Folded type  & Folded diagram \\
     \hline
     $\tilde{\mathrm{A}}_{2n-1}$ & \dynkin[fold,extended]A{**...***...**} & $\tilde{\mathrm{C}}_n^t$ & \dynkin[reverse arrows,extended]C{**...**} \\ 
     $\tilde{\mathrm{A}}_{2n}$ & \dynkin[fold,extended]A{**...**...**} & $\tilde{\mathrm{C}}_n'$ & \dynkin[affine mark=o]A[2]{**...***}\\ 
     $\tilde{\mathrm{E}}_6$ & \dynkin[extended,fold,backwards]E6 & $\tilde{\mathrm{F}}_4^t$ & \dynkin[reverse arrows,extended]F4\\ 
    \end{tabular}
    \caption{Folding the Dynkin diagrams}\label{table:fold}
\end{table}
\FloatBarrier
\subsubsection{The odd $\mathrm{A}_n$ and $\mathrm{E}_6$}\label{subsubsec:odd-a-e6}
Consider the diagram of Figure 2, describing $\textrm{E}_6$,
\begin{figure}
    \dynkin[fold,labels={\gamma,\eta,\beta,\alpha,\beta,\gamma},scale=1.5,backwards]E6
    \caption{Dynkin diagram for $\textrm{E}_6$.}
\end{figure}
and assume we have a Bochner--Toda system associated with this Dynkin diagram, with the extra imposed symmetries as indicated by the grey lines, and associated functions $\tilde{e}_\delta$, $\tilde{e}_\alpha$, etc. The equations of (\ref{eq:bochner-toda-variant}) for $\alpha$ and $\beta$ are then 
\begin{gather*}
    \frac{1}{2}\Delta_\mu\log \tilde{e}_\alpha=2\tilde{e}_{\alpha}-2\tilde{e}_{\beta}-\tilde{e}_\eta-1,\\
    \frac{1}{2}\Delta_\mu\log \tilde{e}_\beta=2\tilde{e}_{\beta}-\tilde{e}_{\alpha}-\tilde{e}_{\gamma}-1.
\end{gather*}
This is identical to a Bochner--Toda system where $a_{\alpha\beta}=-2$ and $a_{\beta\alpha}=-1$, i.e., one that corresponds to Figure 3.
\begin{figure}
    \dynkin[labels={\eta,\alpha,\beta,\gamma},scale=1.5,reverse arrows] F4 $\cdots$
    \caption{A portion of a folded diagram.}
\end{figure}
The presence of $\eta$ does not impact this folding. In particular, the folds for $\tilde{\mathrm{A}}_{2n-1}$ and $\tilde{\mathrm{E}}_6$ are all of this form, and we've implicitly described the folding for $\tilde{\mathrm{A}}_{2n-1}$ as well.
\subsubsection{The even $\mathrm{A}_n$}\label{subsubsec:even-a} We show the case $n=6$ in Figure 4.
\begin{figure}
    \dynkin[fold,labels={\alpha,\beta,\gamma,\gamma,\beta,\alpha},scale=1.5] A6
    \caption{Diagram for $\textrm{A}_n$ for $n=6$.}
\end{figure}
The system (\ref{eq:bochner-toda-variant}) at $\gamma$ and $\beta$ is 
\begin{gather*}
    \frac{1}{2}\Delta_\mu\log \tilde{e}_\gamma=2\tilde{e}_\gamma-\tilde{e}_\beta-\tilde{e}_{\gamma}-1=\tilde{e}_{\gamma}-\tilde{e}_{\beta}-1, \\
    \frac{1}{2}\Delta_\mu \log \tilde{e}_\beta=2\tilde{e}_{\beta}-\tilde{e}_\gamma-\tilde{e}_{\alpha}.
\end{gather*}
Thus, setting $\tilde{e}_\gamma'=\frac{e_\gamma}{2}$, we have 
\begin{gather*}
    \frac{1}{2}\Delta_\mu\log\tilde{e}_\gamma'=2\tilde{e}_\gamma'-\tilde{e}_\beta, \\
    \frac{1}{2}\Delta_\mu \log \tilde{e}_\beta=2\tilde{e}_\beta-2\tilde{e}_\gamma'-\tilde{e}_{\alpha}.
\end{gather*}
This is equivalent to a Bochner--Toda system with $a_{\tilde{\gamma}\beta}=-1, a_{\beta\tilde{\gamma}}=-2$. This is achieved by the diagram with a double edge directed from $\tilde{\gamma}$ to $\beta$.
\subsubsection{Bochner--Toda system associated with a path}\label{subsubsec:bt-line}
By \S\ref{subsubsec:odd-a-e6}, \S\ref{subsubsec:even-a}. and Table \ref{table:fold}, it suffices to show Proposition \ref{prop:delta-smallest} for Bochner--Toda systems associated with $\tilde{\rm C}_n^t, \tilde{\rm C}_n', \tilde{\rm C}_n, \tilde{\rm F}_4,$ $\tilde{\rm G}_2$ or $\tilde{\rm F}_4^t$. All of these correspond to generalized Cartan matrices of affine type, and after forgetting edge multiplicity, the Dynkin diagram becomes an undirected path. 
\par Suppose, therefore, that we are in the following general setting: let $A$ be a generalized Cartan matrix of affine type, whose underlying Dynkin diagram is a path. Recall that the fact that $A$ is of affine type means that there exists a unique vector $u\in\mathbb{R}_{>0}^n$ such that $Au=0$ \cite[pp. 337]{Carter2005}.
\begin{lem}\label{lm:order}
    Let $w:S\to\mathbb{R}_{\geq 0}^n$ be a smooth function such that 
    \begin{align*}
        \frac{1}{2}\Delta_\mu \log w_i=\sum_{j=1}^nA_{ij}w_j - 1\text{ for all }i=1,2,...,n.
    \end{align*}
Suppose that every $w_i$ has a discrete (possibly empty) zero set, and that the zero set of $w_1$ is non-empty. Suppose further that, if $D(w_i)$ is the divisor describing the zero set with multiplicity of $e_i$, 
       $$D(w_1)>D(w_2)\geq D(w_3)\geq \dots \geq D(w_n)=0.$$
Then we have a pointwise estimate
    \begin{align*}
        \frac{w_1}{u_1}\leq \frac{w_2}{u_2}\leq...\leq \frac{w_n}{u_n},
    \end{align*}
    and each inequality $\frac{w_i}{u_i}\leq \frac{w_{i+1}}{u_{i+1}}$ fails to be strict only at points where $w_{i+1}$ vanishes.
\end{lem}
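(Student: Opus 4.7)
The plan is to apply the Dai--Li maximum principle (Theorem \ref{thm: DL max}) to the log-ratios
$p_i := \log(w_{i+1}/u_{i+1}) - \log(w_i/u_i)$ for $i=1,\ldots,n-1$. First I would check that, thanks to $D(w_i)\ge D(w_{i+1})$, each $p_i$ is smooth on $S$ away from the isolated set $P_i:=\{x:D(w_i)(x)>D(w_{i+1})(x)\}$, where it tends to $+\infty$; at common zeros of equal order, the ratio $w_{i+1}/w_i$ is bounded and $p_i$ extends continuously. Since $w_1$ vanishes somewhere and $D(w_1)>D(w_2)$, the set $P:=\bigcup_i P_i$ is non-empty.

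Next, differencing the Bochner--Toda equations cancels the $-1$ source term, leaving
\[
\tfrac{1}{2}\Delta_\mu p_i = \sum_{j=1}^n (A_{i+1,j}-A_{i,j})\,w_j.
\]
Using the elementary identity $w_{k+1}/u_{k+1}-w_k/u_k = M_k p_k$ with $M_k:=\int_0^1 e^{v_k+tp_k}\,dt>0$ (where $v_k=\log(w_k/u_k)$), one telescopes to write $w_j = u_j(w_n/u_n) - u_j\sum_{k\ge j}M_k p_k$ for $j<n$. Substituting this into the display above and invoking $Au=0$ row by row, the coefficient of $w_n/u_n$ cancels, and since the Dynkin diagram is a path the resulting linear system for the $p_k$'s is tridiagonal. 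A direct simplification via $Au=0$ produces the entries $C_{i,i-1}=A_{i,i-1}u_{i-1}$, $C_{i,i}=-A_{i,i+1}u_{i+1}-A_{i+1,i}u_i$, and $C_{i,i+1}=A_{i+1,i+2}u_{i+2}$ (with the obvious omissions at the endpoints $i=1,n-1$).

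The main task is then to verify the three hypotheses of Theorem \ref{thm: DL max}. The sign pattern above is cooperative: the diagonals $-A_{i,i+1}u_{i+1}-A_{i+1,i}u_i$ are strictly positive, and the off-diagonals are non-positive. Column-diagonal dominance follows from another application of $Au=0$: the interior column sums telescope to zero, while the two boundary columns sum to $-A_{1,2}u_2$ and $-A_{n,n-1}u_{n-1}$, both strictly positive. Full coupling holds since consecutive nodes of the Dynkin path are connected, so every tridiagonal off-diagonal is non-zero. With source $f_i\equiv 0$, the non-emptiness of $P$ activates the conclusion $p_i>0$ on $S\setminus P_i$.

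Finally, I would translate the conclusion to the lemma statement: at any point where $w_{i+1}>0$, strict positivity of $p_i$ (either finite with $w_i>0$, or $+\infty$ with $w_i=0$) gives $w_i/u_i < w_{i+1}/u_{i+1}$, while at zeros of $w_{i+1}$, which are automatically zeros of $w_i$ by the divisor hypothesis, both ratios vanish and equality holds. The most delicate step is the column-diagonal dominance verification; it succeeds precisely because $u$ is the positive null-vector of the affine Cartan matrix $A$, and both the cancellation of the $w_n/u_n$ term and the telescoping of the column sums hinge on $Au=0$.
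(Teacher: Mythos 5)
Your proposal is correct and follows essentially the same route as the paper: you pass to the log-ratios $p_i=\log(f_{i+1}/f_i)$, arrive at exactly the same tridiagonal cooperative, column-diagonally dominant, fully coupled system (your $C_{ij}$ agree with the paper's, with diagonal weights $M_k=\frac{f_{k+1}-f_k}{\log f_{k+1}-\log f_k}$), and trigger condition (2) of Dai--Li's maximum principle via the blow-up of $p_1$ at the zeros of $w_1$. The only cosmetic difference is that the paper regroups the differenced equations directly into telescoped form in the $f_{i+1}-f_i$ rather than substituting $w_j=u_j f_n-u_j\sum_{k\geq j}M_kp_k$, but the resulting matrix and all verifications coincide.
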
  
For the Coxeter cyclic $G$-Higgs bundles of Theorem C', $w_1$ is $\tilde{e}_{-\delta}=\tilde{e}_0$, which necessarily has zeros, while the other $w_i$'s correspond to (rescaled) energies of simple roots. Thus, Lemma \ref{lm:order} immediately implies Proposition \ref{prop:delta-smallest} in the cases when there is no folding, i.e., for $\mathrm{C}_n, \mathrm{F}_4$ or $\mathrm{G}_2$. For the other cases ($\mathrm{A}_n$ or $\mathrm{E}_6$), it suffices to observe that $n_{\nu(\alpha)}=n_\alpha$, where $\nu:\Pi\to\Pi$ is the involution defined by the grey lines in Table \ref{table:fold}, and that the vector $u$ is obtained by identifying the coordinates $n\in\mathbb{Z}^{\mathcal{Z}}$ along $\nu$.
\begin{remark}
    In the case of $\tilde{\rm A}_{2n}$, since $\tilde{e}_\gamma'=\frac{\tilde{e}_\gamma}{2}$, we should also set $n_\gamma'=\frac{n_\gamma}{2}$. Then $\frac{\tilde{e}_\gamma'}{n_\gamma'}=\frac{\tilde{e}_\gamma}{n_\gamma}$, so Lemma \ref{lm:order} still implies Proposition \ref{prop:delta-smallest}.
\end{remark}
\begin{proof}[Proof of Lemma \ref{lm:order}]
    Write $f_j=\frac{w_j}{u_j}$ and $B_{ij}=A_{ij}u_j$. Then, on the complement of the zero set of $f_1$,
    \begin{align*}
        \frac{1}{2}\Delta_\mu\log f_i=\sum_{j=1}^n B_{ij}f_j - 1\text{ for all }i=1,2,...,n.
    \end{align*}
    By assumption, we have $\sum_{j=1}^n B_{ij}=0$. Since the Dynkin diagram of $A$ is a path, it follows that 
    \begin{align*}
        \sign{B_{ij}}=\left\{\begin{matrix*}[l]
            1 & \text{if }i=j, \\
            -1 & \text{if }\abs{i-j}=1, \\
            0 & \text{if }\abs{i-j}\geq 2.
        \end{matrix*}\right.
    \end{align*}
    Write $B_{i,i+1}=-U_i$ and $B_{i,i-1}=-L_i$, so in particular $B_{ii}=L_i+U_i$. Thus $U_i, L_i>0$. We now have, still away from the zeros of $f_1,$
    \begin{align*}
        \frac{1}{2}\Delta_\mu\log\frac{f_{i+1}}{f_i}&=(L_{i+1}+U_{i+1}+U_i)f_{i+1}-U_{i+1}f_{i+2}-(L_{i+1}+L_i+U_i)f_i+L_i f_{i-1}\\
        &=-L_i(f_i-f_{i-1})+(L_{i+1}+U_i)(f_{i+1}-f_i)-U_{i+1}(f_{i+2}-f_{i+1}).
    \end{align*}
    Setting $v_i=\log\frac{f_{i+1}}{f_i}$ and $v=(v_1,\dots, v_{n-1})$, we see that $\Delta_\mu v=2 CDv$, where $D$ is the diagonal matrix with entries 
    \begin{align*}
        D_{ii}=\frac{f_{i+1}-f_i}{\log f_{i+1}-\log f_i},
    \end{align*}
    and 
    \begin{align*}
        C_{ij}=\left\{\begin{matrix*}[l]
            L_{i+1}+U_i &\text{ if }i=j, \\
            -U_{i+1} &\text{ if }j=i+1, \\
            -L_i &\text{ if }j=i-1.
        \end{matrix*}\right.
    \end{align*}
    The matrix $C_{ij}$ is cooperative and has vanishing column sums, and is thus column-diagonally dominant. Therefore, so is $2CD$. The system for $\Delta_\mu v$ is well-defined away from points such that $w_{i}$ has a higher order of vanishing than $w_{i+1}$. As we approach these points (if any), the corresponding function $v_i$ tends to $+\infty.$ Since $v_1=\log\frac{f_2}{f_1}$ is unbounded from above (since $w_1$, and therefore $f_1$, vanishes somewhere), condition (2) from Theorem \ref{thm: DL max} is triggered. It follows that $v_i>0$ for all $i=1,2,...,n-1$, and moreover that $f_{i+1}>f_i$. The result is shown.
\end{proof}


\appendix

\section{Maximum principles for elliptic systems}
Here we generalize the methods of Dai--Li that prove Theorem \ref{thm: DL max} (i.e. Lemma 3.1 in \cite{Dai2018}) to get a general result that can be used to produce more maximum principles, Theorem \ref{thm:mp}. Theorem \ref{thm:mp} is not used in the proof of our main theorem, but we however believe it helps further explain Theorem \ref{thm: DL max}. 
\par We show in particular how to use Theorem \ref{thm:mp} to prove a slight generalization of case (3) of Dai-Li's maximum principle, namely Theorem \ref{thm:dai-li-gen} below. Since Dai-Li's maximum principle is an important ingredient in the proofs of the main theorems, we thought it worthwhile to include this content.

\subsection{Graph neighbourhood maximum principle}
Denote by $\langle\cdot,\cdot\rangle$ the standard Euclidean inner product on $\mathbb{R}^n$, and by $A^*$ the adjoint of the linear map $A:\mathbb{R}^n\to\mathbb{R}^m$ with respect to this inner product. Denote by $\mathbb{R}^{n\times n}$ the space of endomorphisms $\mathbb{R}^n\to\mathbb{R}^n$.
\begin{defn}
   Let $X$ be a manifold and $A:X\to\mathbb{R}^{n\times n}$ be a smooth map. Given vectors $v_0,v_1,v_2,...,v_k\in\mathbb{R}^{n}$, we say that $v_0$ is $A$-weaker than $(v_1,v_2,...,v_k)$ if there exist smooth functions $\lambda_i:X\to\mathbb{R}_{\geq 0}$ for $i=1,2,...,k$, such that 
    \begin{align*}
       A(x)v_0=\sum_{i=1}^k \lambda_i(x)(v_0-v_i)\text{ for all }x\in X.
    \end{align*}
    Moreover, we say that $v_0$ is minimally $A$-weaker than $(v_1,v_2,...,v_k)$ if no $\lambda_i$ vanishes identically.
\end{defn}
\begin{thm}\label{thm:mp}
    Let $X$ be a closed Riemannian manifold, and let $A:X\to\mathbb{R}^{n\times n}$ be a smooth map. Suppose that $u:X\to\mathbb{R}^n$ is a solution to $$\Delta u=Au.$$ Let $S\subset\mathbb{R}^n$ be a finite set of vectors with a distinguished subset $ S_\partial\subset S$, and suppose that $\Gamma$ is a directed graph on $S$, such that
    \begin{enumerate}
        \item\label{item:structure} for any $v\in S\setminus S_\partial$, the vector $v$ is minimally $A^*$-weaker than its (directed) neighbourhood, and
        \item\label{item:reachable} for any $v\in S\setminus S_\partial$, there exists a vertex in $S_\partial$ that can be reached from $v$.
    \end{enumerate}
    Then 
    \begin{align}\label{eq:mp-main}
        \min_{v\in S\setminus S_\partial}\min_X\langle v,u\rangle\geq \min_{v\in S_\partial}\min_X\langle v,u\rangle.
    \end{align}
    If equality holds in (\ref{eq:mp-main}), and if we let $v_0\in S\setminus S_\partial$ be any vector achieving the minimum in the left-hand side of (\ref{eq:mp-main}), then the function $\langle v_0, u\rangle$ is constant.
\end{thm}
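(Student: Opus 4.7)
\medskip

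\noindent\textbf{Proof plan.} The starting point is the identity
\[
\Delta \langle v,u\rangle \;=\; \langle v, \Delta u\rangle \;=\; \langle v, Au\rangle \;=\; \langle A^*v, u\rangle,
\]
valid for any fixed $v\in\mathbb{R}^n$. For $v\in S\setminus S_\partial$ with directed neighbours $v_1,\dots,v_k$, the minimal $A^*$-weakness hypothesis lets me write $A^*v=\sum_i \lambda_i(v-v_i)$ with smooth $\lambda_i\geq 0$, none identically zero, and therefore
\[
\Delta\langle v,u\rangle \;=\; \sum_{i=1}^k \lambda_i\big(\langle v,u\rangle-\langle v_i,u\rangle\big).
\]
This is exactly a cooperative, column-diagonally dominant elliptic system, with $S_\partial$ playing the role of the "boundary". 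I plan to prove the inequality \eqref{eq:mp-main} by contradiction and the strong minimum principle, and then read off the equality statement from the same argument.

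\smallskip

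Write $m$ for the LHS and $m_\partial$ for the RHS of \eqref{eq:mp-main}, and assume for contradiction that $m<m_\partial$. Pick $v_0\in S\setminus S_\partial$ and $x_0\in X$ with $\langle v_0,u\rangle(x_0)=m$, and set $g_v=\langle v,u\rangle-m$. Then $g_{v_0}\geq 0$ on $X$ with $g_{v_0}(x_0)=0$, and rearranging the above identity gives
\[
(\Delta - c_{v_0})\,g_{v_0} \;=\; -\sum_i \lambda_i\, g_{v_i},\qquad c_{v_0} := \sum_i \lambda_i \geq 0.
\]
For neighbours $v_i\in S\setminus S_\partial$ we have $g_{v_i}\geq 0$ by minimality of $m$; for $v_i\in S_\partial$ we have $g_{v_i}\geq m_\partial - m>0$. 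Hence the RHS is $\leq 0$, so $g_{v_0}$ is a non-negative supersolution of $(\Delta - c_{v_0})g=0$ with $c_{v_0}\geq 0$, attaining its minimum value $0$ at $x_0$. Hopf's strong minimum principle forces $g_{v_0}\equiv 0$ on the connected component of $x_0$ (and I will assume $X$ connected throughout, working on a single component otherwise).

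\smallskip

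With $g_{v_0}\equiv 0$, the identity for $\Delta g_{v_0}$ becomes a pointwise equation $\sum_i \lambda_i(x)\,g_{v_i}(x)=0$ in which every summand is non-negative. So each summand must vanish identically. If some neighbour $v_i$ lies in $S_\partial$, then $g_{v_i}\geq m_\partial-m>0$ everywhere, forcing $\lambda_i\equiv 0$, contradicting minimal $A^*$-weakness. If instead $v_i\in S\setminus S_\partial$, then $\lambda_i g_{v_i}\equiv 0$ together with $\lambda_i\not\equiv 0$ (and $\lambda_i$ continuous) implies $g_{v_i}$ vanishes on a non-empty open set, so $v_i$ also attains the minimum and the same Hopf argument yields $g_{v_i}\equiv 0$. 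Iterating along a directed path from $v_0$ to $S_\partial$, which exists by hypothesis (2), we eventually use an edge from some vertex in $S\setminus S_\partial$ into $S_\partial$, producing the contradiction.

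\smallskip

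Finally, in the equality case $m=m_\partial$, the strict positivity $g_{v_i}>0$ for $v_i\in S_\partial$ is relaxed to $g_{v_i}\geq 0$, but the supersolution inequality $(\Delta - c_{v_0})g_{v_0}\leq 0$ still holds verbatim. The same application of Hopf's strong minimum principle to the non-negative function $g_{v_0}$, which attains $0$ at $x_0$, yields $g_{v_0}\equiv 0$, so $\langle v_0,u\rangle\equiv m$ is constant, as required. The main technical point is really just the correct packaging into a supersolution with $c_{v_0}\geq 0$ so that Hopf applies; once that is done, the graph propagation step is bookkeeping powered by the "not identically zero" half of minimal weakness.
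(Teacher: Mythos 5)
Your argument is correct and follows essentially the same route as the paper's: the identity $\Delta\langle v,u\rangle=\langle A^*v,u\rangle=\sum_i\lambda_i(\langle v,u\rangle-\langle v_i,u\rangle)$, the strong maximum principle for $\Delta-c$ with $c=\sum_i\lambda_i\geq 0$, and propagation along directed edges using the non-vanishing of each $\lambda_i$ (minimality) together with reachability of $S_\partial$. The paper merely packages the one-step estimate as a standalone lemma (Lemma \ref{lm:mp-directional}) and runs the propagation on the minima $b_v$ directly rather than by contradiction, but the mathematical content is identical.
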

Theorem \ref{thm:mp} readily follows from Lemma \ref{lm:mp-directional} below, whose proof we defer to the next subsection. 
\begin{lem}\label{lm:mp-directional}
    Let $X, A, u$ be as in Theorem \ref{thm:mp}. Then, if $v_0,v_1,v_2,...,v_k\in\mathbb{R}^{n}$ are such that $v_0$ is minimally $A^*$-weaker than $(v_1,v_2,...,v_k)$, then  
    \begin{align}\label{eq:ineq}
        \min_X\langle v_0,u\rangle\geq \min_{1\leq i\leq k}\min_X\langle v_i, u\rangle.
    \end{align}
    Moreover, if equality holds in (\ref{eq:ineq}), then $\min_X\langle v_0,u\rangle\geq\min_X \langle v_j, u\rangle$ for all $0\leq j\leq k$ and $\langle v_0, u\rangle$ is constant.
\end{lem}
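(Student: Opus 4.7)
The plan is to reduce the lemma to a scalar elliptic inequality for the single function $f := \langle v_0, u\rangle$ and then apply the strong maximum principle. Using $\Delta u = Au$ and the hypothesis that $A^*v_0 = \sum_{i=1}^k \lambda_i(v_0 - v_i)$ with $\lambda_i \geq 0$, I compute
\[
\Delta f \;=\; \langle v_0, \Delta u\rangle \;=\; \langle A^* v_0, u\rangle \;=\; \sum_{i=1}^k \lambda_i \bigl(f - \langle v_i, u\rangle\bigr).
\]
Setting $m := \min_X f$ and $g := f - m \geq 0$, this rearranges as
\[
\Delta g - \Big(\sum_i \lambda_i\Big) g \;=\; \sum_i \lambda_i\bigl(m - \langle v_i, u\rangle\bigr).
\]
I would abbreviate the left-hand side as $Lg$; note that $L$ has non-positive zeroth order coefficient $-\sum_i \lambda_i \leq 0$, so the strong maximum principle for supersolutions applies (using connectedness and compactness of $X$).

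For the inequality (\ref{eq:ineq}), I argue by contradiction. Suppose $m < M := \min_{1\leq i\leq k}\min_X \langle v_i, u\rangle$. Then $m - \langle v_i, u(x)\rangle < 0$ for all $i$ and all $x$, so the right-hand side of the displayed equation is $\leq 0$, i.e. $Lg \leq 0$. Since $g \geq 0$ attains its minimum value $0$, the strong maximum principle forces $g \equiv 0$, and hence $\Delta g \equiv 0$. Plugging back in gives $\sum_i \lambda_i(x)\bigl(m - \langle v_i, u(x)\rangle\bigr) \equiv 0$; each summand is strictly negative wherever $\lambda_i > 0$, so every $\lambda_i$ must vanish identically, contradicting the minimally-weaker hypothesis. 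Hence $m \geq M$.

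For the equality statement, I would observe that the same calculation shows $Lg \leq 0$ whenever $m \leq M$ (now with non-strict summand signs), so the strong maximum principle again yields $g \equiv 0$, i.e. $\langle v_0, u\rangle$ is constant. Revisiting the right-hand side identity, each summand $\lambda_i\bigl(m - \langle v_i, u\rangle\bigr)$ is non-positive and their sum is identically zero, so each vanishes identically on $X$. Because minimality of the decomposition gives some $x_i \in X$ with $\lambda_i(x_i) > 0$, this forces $\langle v_i, u(x_i)\rangle = m$, and therefore $\min_X \langle v_j, u\rangle \leq m = \min_X \langle v_0, u\rangle$ for every $1 \leq j \leq k$ (and trivially for $j = 0$).

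The main obstacle I anticipate is ensuring that the strong maximum principle applies with the correct sign conventions, since the $\lambda_i$ are only required to be non-negative and may vanish at individual points (in particular at the minimum of $f$); a naive pointwise argument at the minimum would be inconclusive when all $\lambda_i$ happen to vanish there, which is precisely why the strong (rather than weak) form is needed, and why the minimally-weaker condition — ensuring no $\lambda_i$ vanishes identically — is used to close the argument.
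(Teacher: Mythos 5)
Your proof is correct and follows essentially the same route as the paper: the identity $\Delta\langle v_0,u\rangle=\sum_i\lambda_i\langle v_0-v_i,u\rangle$, the strong maximum principle for the operator $\Delta-\sum_i\lambda_i$, and the use of minimality (each $\lambda_i\not\equiv 0$) to close both the inequality and the equality case. The only difference is cosmetic: you run the inequality as a contradiction argument on $f-\min_X f$, while the paper bounds the right-hand side by $\bigl(\sum_i\lambda_i\bigr)\bigl(\langle v_0,u\rangle-\min_j\min_X\langle v_j,u\rangle\bigr)$ and applies the maximum principle directly.
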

We now show Theorem \ref{thm:mp} assuming Lemma \ref{lm:mp-directional}. For any $v\in S$, define $b_v=\min_X \langle v, u\rangle$, and set $b_S=\min_{v\in S}b_v$ and $b_{S_\partial}=\min_{v\in S_\partial}b_v$. Note that clearly $b_S\leq b_{S_\partial}$, and we need to show equality.
\begin{claim}\label{claim:minima-spread}
    If $v\in S\setminus S_\partial$ is such that $b_v=b_S$, then $b_w=b_S$ for all $w\in N_\Gamma^+(v)$.
\end{claim}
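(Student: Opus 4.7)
The plan is to apply Lemma \ref{lm:mp-directional} directly, taking $v_0 = v$ and letting $v_1, \ldots, v_k$ enumerate the directed out-neighbourhood $N_\Gamma^+(v)$. Hypothesis (\ref{item:structure}) in the statement of Theorem \ref{thm:mp} says precisely that this collection satisfies the minimal $A^*$-weakness condition required by the lemma, so the lemma yields
$$b_v \;=\; \min_X\langle v, u\rangle \;\geq\; \min_{1\leq i\leq k}\min_X\langle v_i, u\rangle \;=\; \min_{1\leq i\leq k} b_{v_i}.$$

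Since $b_{v_i} \geq b_S$ for every $i$ by definition of $b_S$, and we are assuming $b_v = b_S$, the chain of inequalities above must collapse: $b_v = \min_i b_{v_i} = b_S$. This triggers the equality clause of Lemma \ref{lm:mp-directional}, which asserts that $b_v \geq b_{v_j}$ for \emph{every} $j \in \{1, \ldots, k\}$, not just for the index achieving the minimum. Combined with the reverse inequality $b_{v_j} \geq b_S = b_v$ that holds by the definition of $b_S$, we conclude $b_{v_j} = b_S$ for each $j$, which is exactly the content of Claim \ref{claim:minima-spread}. Observe that no use of the reachability hypothesis (\ref{item:reachable}) or of the distinguished set $S_\partial$ is needed at this step; those ingredients are reserved for the subsequent induction that iterates the claim along directed paths in $\Gamma$ to deduce (\ref{eq:mp-main}).

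The only subtlety worth flagging, and the place where one could in principle get stuck, is the role of the word "minimally" in hypothesis (\ref{item:structure}). This strengthening is exactly what makes the equality clause of Lemma \ref{lm:mp-directional} applicable to \emph{every} out-neighbour of $v$: without it, some coefficient $\lambda_i$ could vanish identically and the corresponding neighbour $v_i$ would drop out of the equality conclusion, breaking the propagation of the minimum along $\Gamma$-edges. So the "hard part" is not in any calculation but in recognising that Claim \ref{claim:minima-spread} is essentially a direct readout of the minimality in hypothesis (\ref{item:structure}) through the equality statement of Lemma \ref{lm:mp-directional}.
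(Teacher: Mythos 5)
Your proof is correct and follows essentially the same route as the paper: apply Lemma \ref{lm:mp-directional} to $v$ and its out-neighbourhood, note the inequality collapses because $b_v=b_S$ is already the global minimum, and then invoke the equality clause to push the minimum onto every out-neighbour. Your remark on the role of ``minimally'' is accurate and matches why the equality case of the lemma delivers the bound for all $j$, not just the minimizing index.
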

\begin{proof}
    By Lemma \ref{lm:mp-directional}, we have 
    \begin{align*}
        b_v\geq \min_{w\in N^+_\Gamma(v)} b_w.
    \end{align*}
    Since $b_v=b_S\leq b_w$ for all $w\in S$, we see that $b_v=\min_{w\in N_\Gamma^+(v)}b_w$. Hence equality holds in (\ref{eq:ineq}) from Lemma \ref{lm:mp-directional}, and therefore $b_w\leq b_v=b_S$ for all $w\in N_\Gamma^+(v)$, as desired. However. by definition we have $b_S\leq b_w$, so $b_w=b_S$ for $w\in N_\Gamma^+(v)$.
\end{proof}
Returning to the theorem, let $v_0\in S$ be such that $b_S=b_{v_0}$. We split the proof into two cases.
\begin{enumerate}
    \item  If $v_0\in S_\partial$, then $b_S=b_{S_\partial}$ and there is nothing to prove. 
    \item  If $v_0\not\in S_\partial$, by assumption (\ref{item:reachable}) in Theorem \ref{thm:mp}, there exists a sequence $v_0, v_1, v_2,...,v_k\in S\setminus S_\partial$ and $v_{k+1}\in S_\partial$, such that $v_{i+1}\in N_\Gamma^+(v_i)$ for $i=0,1,2,...,{k}$. By induction and Claim \ref{claim:minima-spread}, we see that 
    \begin{align*}
        b_S=b_{v_0}=b_{v_1}=...=b_{v_{k+1}}\geq b_{S_\partial},
    \end{align*}
    and hence $b_S=b_{S_\partial}$, as desired.
\end{enumerate}
Suppose now that $b_{v_0}=b_{S_\partial}$, i.e., that equality holds in (\ref{eq:mp-main}). Then we have $b_{v_0}=\min_{w\in N_\Gamma^+(v_0)} b_w$, so by the equality case of Lemma \ref{lm:mp-directional}, it follows that $\langle v_0, u\rangle$ is constant.

\subsection{Proof of Lemma \ref{lm:mp-directional}}   
\begin{proof}[Proof of Lemma \ref{lm:mp-directional}]
We have 
    \begin{align*}
        \Delta\langle v_0,u\rangle=\langle v_0, Au\rangle=\langle A^*v_0, u\rangle.
    \end{align*}By assumption, there exist smooth functions $\lambda_1, \lambda_2,...,\lambda_k:X\to\mathbb{R}_{\geq 0}$ such that 
    \begin{align*}
        A^*v_0=\sum_{i=1}^k \lambda_i(v_0-v_i).
    \end{align*}
    Thus 
    \begin{align}
        \Delta \langle v_0,u\rangle&=\sum_{i=1}^k \lambda_i \langle v_0-v_i, u\rangle=\left(\sum_{i=1}^k \lambda_i\right)\left(\langle v_0, u\rangle-\sum_{j=1}^k \frac{\lambda_j}{\sum_{i=1}^k\lambda_i} \langle v_i, u\rangle \right)\nonumber\\
        &\leq \left(\sum_{i=1}^k \lambda_i\right)\left(\langle v_0, u\rangle-\min_{1\leq j\leq k}\langle v_j, u\rangle\right)\label{eq:ineq-1}\\ &\leq\left(\sum_{i=1}^k \lambda_i\right)\left(\langle v_0, u\rangle-\min_{1\leq j\leq k}\min_X \langle v_j, u\rangle\right)\label{eq:eq-2}.
    \end{align}
    Since no $\lambda_i$ vanishes identically, neither does $\sum_{i=1}^k\lambda_i$. Applying the maximum principle to the function $\langle v_0,u\rangle-\min_{1\leq j\leq k}\min_X \langle v_i, u\rangle$, we see that it is non-negative, and (\ref{eq:ineq}) is shown.
    \par Assume now that we have equality in (\ref{eq:ineq}). By the strong maximum principle \cite[Theorem 3.5, pp. 35]{Gilbarg2001}, $\langle v_0,u\rangle\equiv\min_{1\leq j\leq k}\min_X\langle v_j,u\rangle$. Thus equality holds in both (\ref{eq:ineq-1}) and (\ref{eq:eq-2}). Since equality holds in (\ref{eq:eq-2}), we have $\min_{1\leq j\leq k}\langle v_j, u\rangle\equiv \min_{1\leq j\leq k}\min_X \langle v_j, u\rangle=:b$. Since equality holds in (\ref{eq:ineq-1}), we see that, for any $j\in \{1,2,...,k\}$, 
    \begin{align*}
        \langle v_j,u\rangle=\min_{1\leq i\leq k}\langle v_i, u\rangle=b\text{ on the open set }\lambda_j^{-1}(\mathbb{R}_+). 
    \end{align*}
    By minimality, $\lambda_j^{-1}(\mathbb{R}_+)$ is non-empty, so $\min_X\langle v_j, u\rangle\leq b$.
    \end{proof}
\subsection{Recovering Dai-Li's maximum principle}
By choosing suitable parameters for Theorem \ref{thm:mp}, we prove the following.
\begin{thm}\label{thm:dai-li-gen}
    Let $X$ be a closed Riemannian manifold, and let $c_{ij}:X\to\mathbb{R}$ be smooth functions indexed by $1\leq i,j\leq n$, such that 
    \begin{enumerate}
        \item(cooperative) $c_{ii}\geq 0$ and $c_{ij}\leq 0$ for $i\neq j$, 
        \item(column-diagonally dominant) $\sum_{i=1}^n c_{ij}\geq 0$ for all $1\leq j\leq n$, and 
        \item(fully coupled) there is no partition $\{1,2,...,n\}=A\cup B$ such that $c_{ij}=0$ for $i\in A, j\in B$.  
    \end{enumerate}
    Suppose that $u_i:X\to\mathbb{R}$ for $i=1,\dots, n$ solve the system 
    \begin{align*}
        \Delta u_i=\sum_{j=1}^n c_{ij}u_j\text{ for }i=1,2,...,n.
    \end{align*}
    Now suppose that for some constants $\nu_i>0$ we have $\sum_{i=1}^n \nu_i u_i\geq 0$. Then either $u_i\equiv 0$ for all $i$, or $u_i>0$ for all $i$. 
\end{thm}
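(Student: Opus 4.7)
The strategy is to apply Theorem \ref{thm:mp}, the graph-neighbourhood maximum principle, to deduce that each $u_i \geq 0$ pointwise, and then to bootstrap to the dichotomy by a standard strong-maximum-principle argument together with the fully coupled hypothesis. Concretely, I would take $S = \{e_1,\dots,e_n,\nu\} \subset \mathbb{R}^n$ with $S_\partial = \{\nu\}$, equipped with the directed graph $\Gamma$ in which each $e_i \in S \setminus S_\partial$ has an edge to $\nu$ and to every $e_j$ ($j \neq i$) for which $c_{ij} \not\equiv 0$. The hypothesis $\sum_i \nu_i u_i \geq 0$ becomes $\min_X \langle \nu, u\rangle \geq 0$, so $\nu$ plays the role of a non-negative ``boundary value'', and reachability of $S_\partial$ from every $e_i$ is immediate.

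The technical heart of the argument is the verification of the $A^*$-weaker condition for each $e_i$: one must write
\[
A^*(x)\,e_i \;=\; c_{ii}(x)\,e_i + \sum_{j \neq i}c_{ij}(x)\,e_j \;=\; \sum_{j \neq i}\lambda_{ij}(x)(e_i - e_j) + \beta_i(x)(e_i - \nu),
\]
with smooth, non-negative, collectively non-vanishing coefficients. The natural choice $\lambda_{ij} = -c_{ij} - \beta_i\nu_j$ makes the off-diagonal match automatic, but matching the $e_i$-coefficient then forces $\beta_i(x) = \sum_j c_{ij}(x)/(1 - \sum_k \nu_k)$, and the sign of the row sum $\sum_j c_{ij}(x)$ need not be constant in $x$, nor is it controlled by our hypotheses (which constrain column sums, not row sums). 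This will be the main obstacle. I expect to resolve it by enlarging $S$ to include several rescaled copies $t_\ell \nu$ of $\nu$ (all still placed in $S_\partial$, since $\langle t_\ell\nu, u\rangle \geq 0$), thereby gaining enough scalar degrees of freedom to split any pointwise-varying residual into non-negative contributions; the column-diagonal dominance $\sum_i c_{ij} \geq 0$ and the positivity of $\nu$ should combine to make the required combinatorial splitting possible at every $x$.

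Granted the decomposition, Theorem \ref{thm:mp} delivers $\min_X u_i = \min_X \langle e_i, u\rangle \geq \min_X\langle \nu,u \rangle \geq 0$ for every $i$, so $u_i \geq 0$ pointwise. For the dichotomy, let $I = \{i : u_i \equiv 0\}$ and suppose $I$ is a proper non-empty subset of $\{1,\dots,n\}$. By the fully coupled hypothesis applied to the partition $(I, I^c)$, there exist $i_0 \in I$ and $j \in I^c$ with $c_{i_0 j} \not\equiv 0$. The identity $0 = \Delta u_{i_0} = \sum_k c_{i_0 k}u_k$ presents a sum of non-positive terms (since $c_{i_0 k} \leq 0$ and $u_k \geq 0$ for $k \neq i_0$, and $c_{i_0 i_0}u_{i_0} \equiv 0$), so each term must vanish identically; in particular $c_{i_0 j}u_j \equiv 0$, forcing $u_j \equiv 0$ on the non-empty open set $\{c_{i_0 j} \neq 0\}$. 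Applying the strong maximum principle to the elliptic inequality $(-\Delta + c_{jj})u_j \geq 0$ satisfied by the non-negative $u_j$ then propagates $u_j \equiv 0$ to all of $X$, contradicting $j \in I^c$. Hence $I \in \{\emptyset,\{1,\dots,n\}\}$; in the former case, the same strong-maximum argument shows that every non-negative, non-identically-zero $u_i$ is in fact strictly positive, completing the dichotomy.
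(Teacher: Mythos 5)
The reduction to Theorem \ref{thm:mp} and your concluding dichotomy argument are fine (the latter is essentially the paper's Claim \ref{claim:vanishing-ui}), but the core step --- establishing $u_i\geq 0$ --- has a genuine gap, and the fix you propose does not close it. With $S=\{e_1,\dots,e_n\}\cup\{t_\ell\nu\}_\ell$ and all the $t_\ell\nu$ in $S_\partial$ (which forces $t_\ell\geq 0$, since a vector $w\in S_\partial$ is only useful if $\min_X\langle w,u\rangle\geq 0$), the required decomposition is $C^*e_i=\sum_{j\neq i}\lambda_{ij}(e_i-e_j)+\sum_\ell\beta_{i\ell}(e_i-t_\ell\nu)$ with $\lambda_{ij},\beta_{i\ell}\geq 0$. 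Setting $T_i=\sum_\ell\beta_{i\ell}t_\ell\geq 0$ and $B_i=\sum_\ell\beta_{i\ell}\geq 0$, coordinate matching gives $\lambda_{ik}=-c_{ik}-T_i\nu_k$ for $k\neq i$ and $B_i=\sum_j c_{ij}+T_i\sum_k\nu_k$; the extra rescaled copies of $\nu$ therefore enter only through the two aggregates $(B_i,T_i)$ and buy no real freedom. If some off-diagonal entry $c_{ik}$ vanishes identically (which full coupling permits), then $\lambda_{ik}\geq 0$ forces $T_i\equiv 0$, and $B_i\geq 0$ then demands that the \emph{row} sum $\sum_j c_{ij}$ be nonnegative --- a condition not implied by the hypotheses, which control column sums. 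For instance
\begin{equation*}
c=\left(\begin{smallmatrix}1&-2&0\\-1&3&-1\\0&-1&1\end{smallmatrix}\right)
\end{equation*}
is cooperative, has all column sums zero, and is fully coupled, yet $c_{13}=0$ and the first row sum is $-1$, so no admissible decomposition of $C^*e_1$ exists for your choice of $S$.

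The paper's proof sidesteps this by taking $S$ to be the indicator vectors $e_A$ of \emph{all} $2^n$ subsets $A\subseteq\{1,\dots,n\}$, plus a single large multiple $K\nu$ with $K>1/\min_i\nu_i$, and $S_\partial=\{0,K\nu\}$. The point is that $(C^*e_A)_i=\sum_{j\in A}c_{ji}$ is a \emph{partial column sum}, whose sign is exactly what the hypotheses control: it is $\leq 0$ for $i\notin A$ (off-diagonal entries) and $\geq 0$ for $i\in A$ (column-diagonal dominance). Since $e_A-e_{A\setminus\{i\}}=e_{\{i\}}$ and $e_A-e_{A\cup\{i\}}=-e_{\{i\}}$ are supported on single coordinates, one reads off the decomposition $C^*e_A=\sum_{i\in A}\lambda_i(e_A-e_{A\setminus\{i\}})+\sum_{i\notin A}\lambda_i(e_A-e_{A\cup\{i\}})$ with manifestly nonnegative $\lambda_i$, and full coupling guarantees an edge to a strictly smaller subset, so every $e_A$ reaches $e_\emptyset=0\in S_\partial$. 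Your singleton-to-singleton graph produces row sums, which is precisely the wrong quantity; replacing your vertex set by the full subset lattice repairs the argument, after which your treatment of the dichotomy goes through unchanged.
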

When $\nu_i=1$ for all $i\in\{1,2,...,n\}$, this is case (3) of Theorem \ref{thm: DL max}.

The remainder of this subsection is devoted to the proof of Theorem \ref{thm:dai-li-gen}. Let $C$ be the $n\times n$ matrix with entries $c_{ij}$. For a set $A\subseteq\{1,2,...,n\}$, we denote by $e_A\in\mathbb{R}^n$ the vector given by  
\begin{align*}
    (e_A)_i=\left\{\begin{matrix}
        1 & \text{ if }i\in A,\\
        0 & \text{ if }i\not\in A.
    \end{matrix}\right.
\end{align*}
We will construct a graph $\Gamma$ on the vertex set $S=\{e_A:A\subseteq\{1,2,...,n\}\}\cup\{K\nu\}$, where $\nu=(\nu_1,\nu_2,...,\nu_n)\in\mathbb{R}_+^n$ and $K>\frac{1}{\min_i\nu_i}$. We will then apply Theorem \ref{thm:mp} with $S_\partial=\{e_0, K\nu\}=\{0,K\nu\}$.
\par Let $\bar{\Gamma}$ be the graph defined as follows: for any proper subset $A\subset\{1,2,...,n\}$, we let 
\begin{align*}
    N_{\bar{\Gamma}}^+(e_A)=\{e_{A\cup\{j\}}:j\not\in A\}\cup\{e_{A\setminus\{j\}}:j\in A\},
\end{align*}
and
\begin{align*}
    N_{\bar{\Gamma}}^+(e_{\{1,2,...,n\}})=\{e_{\{1,2,...,n\}\setminus\{j\}}: j\in \{1,2,...,n\}\}\cup\{K\nu\}.
\end{align*}
\begin{claim}\label{claim:only}
    For any proper subset $A\subseteq\{1,2,...,n\}$, there exists a subset $\mathcal{N}_A\subset N_{\bar{\Gamma}}^+(e_A)$, such that
    \begin{enumerate}
        \item\label{item:minimal} the vector $e_A$ is minimally $C^*$-weaker than $\mathcal{N}_A$, and 
        \item\label{item:points-to-smaller} there exists some subset $B\subsetneq A$ such that $e_B\in \mathcal{N}_A$.
    \end{enumerate}
    Furthermore, $e_{\{1,2,...,n\}}$ is minimally $C^*$-weaker than $N_{\bar{\Gamma}}^+(e_{\{1,2,...,n\}})$.
\end{claim}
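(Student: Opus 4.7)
The plan is to exhibit explicit non-negative coefficients for the weaker-than relation by computing $C^* e_A$ directly, and then use the three hypotheses on $C$ in turn to verify non-negativity, minimality, and the condition that some $B \subsetneq A$ appears.

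First, for a nonempty proper subset $A \subsetneq \{1,\ldots,n\}$, observe that $(C^* e_A)_i = \sum_{j \in A} c_{ji}$, and that $e_A - e_{A \setminus \{j\}} = e_{\{j\}}$ for $j \in A$ while $e_A - e_{A \cup \{j\}} = -e_{\{j\}}$ for $j \notin A$. This suggests the ansatz
\begin{equation*}
C^* e_A = \sum_{j \in A} \mu_j \bigl(e_A - e_{A \setminus \{j\}}\bigr) + \sum_{j \notin A} \eta_j \bigl(e_A - e_{A \cup \{j\}}\bigr),
\end{equation*}
with $\mu_j = \sum_{k \in A} c_{kj}$ for $j \in A$ and $\eta_j = -\sum_{k \in A} c_{kj}$ for $j \notin A$. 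Non-negativity of $\eta_j$ is immediate from the cooperative hypothesis, since $k \in A$ and $j \notin A$ force $k \neq j$. For $\mu_j$ one combines cooperativity with column-diagonal dominance in the form $c_{jj} \geq -\sum_{k \neq j} c_{kj}$ to obtain $\mu_j \geq -\sum_{k \notin A} c_{kj} \geq 0$. Defining $\mathcal{N}_A \subset N_{\bar{\Gamma}}^{+}(e_A)$ as the set of neighbors whose coefficient above is not identically zero makes $e_A$ automatically minimally $C^*$-weaker than $\mathcal{N}_A$.

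The main step, and the only place where the fully coupled hypothesis enters, is property (2): at least one $\mu_j$ with $j \in A$ must fail to vanish identically. I would argue by contradiction. If every such $\mu_j$ were $\equiv 0$, then the chain $0 \leq -\sum_{k \notin A} c_{kj} \leq \mu_j \equiv 0$ would force each $c_{kj}$ with $k \notin A$ and $j \in A$ to vanish identically, exhibiting a partition $\{1,\ldots,n\} = A^c \sqcup A$ that contradicts full coupling. For the remaining case $A = \{1,\ldots,n\}$, the neighborhood additionally contains $K\nu$; augmenting the ansatz with a term $\gamma(e_A - K\nu)$ and setting $\gamma \equiv 1$ gives $\mu_j = \sum_k c_{kj} + K\nu_j - 1 \geq K\nu_j - 1 > 0$ pointwise, since $K > 1/\min_i \nu_i$. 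Every coefficient is then a strictly positive function, so $e_{\{1,\ldots,n\}}$ is minimally $C^*$-weaker than its entire out-neighborhood.
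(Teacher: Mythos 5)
Your proposal is correct and follows essentially the same route as the paper: the same ansatz with coefficients $\sum_{k\in A}c_{kj}$ (resp.\ $-\sum_{k\in A}c_{kj}$), the same use of cooperativity and column-diagonal dominance for non-negativity, the same contradiction with full coupling for item (2), and the same augmentation by $e_{\{1,\dots,n\}}-K\nu$ with coefficient $1$ in the full-set case. Restricting to nonempty $A$ is a harmless (indeed slightly more careful) refinement, since $e_\emptyset\in S_\partial$ and the claim is never invoked there.
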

\begin{proof}
    We first show the claim for $e_{\{1,2,...,n\}}$. Observe that 
    \begin{align*}
        \left(C^*e_{\{1,2,...,n\}}\right)_i=\sum_{j=1}^n C_{ji}\geq 0.
    \end{align*}
    Note that $e_{\{1,2,...,n\}}-K\nu<0$, so we can write 
    \begin{align*}
        C^*e_{\{1,2,...,n\}}=(e_{\{1,2,...,n\}}-K\nu)+\sum_{i=1}^n \lambda_i\left(e_{\{1,2,...,n\}}-e_{\{1,2,...,n\}\setminus\{i\}}\right),
    \end{align*}
    where $\lambda=K\nu-1+C^*e_{\{1,2,...,n\}}>0$. It follows immediately that $e_{\{1,2,...,n\}}$ is minimally $C^*$-weaker than $N_{\bar{\Gamma}}^+(e_{\{1,2,...,n\}})$.
    \par We now turn to other vertices of $\bar{\Gamma}$. Fix a proper subset $A\subset\{1,2,...,n\}$. Note that, since $C$ is cooperative, we have 
    \begin{align*}
        (C^*e_A)_i=\sum_{j\in A} C_{ji}\leq 0\text{ for }i\not\in A.
    \end{align*}
    Similarly, for $i\in A$, we have 
    \begin{align}\label{eq:check-sign-A}
        (C^*e_A)_i=\sum_{j\in A}C_{ji}=\sum_{j=1}^n C_{ji} - \sum_{j\not\in A} C_{ji}\geq 0.
    \end{align}
    Thus \begin{align*}
        C^*e_A=\sum_{i\in A} \lambda_i (e_A-e_{A\setminus\{i\}})+\sum_{i\not\in A}\lambda_i (e_A-e_{A\cup\{i\}}),
    \end{align*}
    where 
    \begin{align*}
        \lambda_i=\left\{\begin{matrix}
            \sum_{j\in A} C_{ji} & \text{ if }i\in A,\\
            -\sum_{j\in A} C_{ji} & \text{ if }i\not\in A.
        \end{matrix}\right.
    \end{align*}
    If we let $\mathcal{N}_A=\{A\cup\{i\}:i\not\in A\text{ and }\lambda_i\not\equiv 0\}\cup\{A\setminus\{i\}:i\in A\text{ and }\lambda_i\not\equiv 0\}$, then $e_A$ is minimally $C^*$-weaker than $\mathcal{N}_A$, so (\ref{item:minimal}) is shown.
    \par It remains to check (\ref{item:points-to-smaller}). Assume that (\ref{item:points-to-smaller}) fails, i.e., that $\lambda_i\equiv 0$ for all $i\in A$. Therefore 
    \begin{align*}
        \sum_{j\in A}C_{ji}=0\text{ for }i\in A.
    \end{align*}
    Then equality must hold in (\ref{eq:check-sign-A}), so 
    \begin{align*}
        \sum_{j=1}^n C_{ji}=0\text{ and }\sum_{j\not\in A}C_{ji}=0\text{ for }i\in A.
    \end{align*}
    In particular, $C_{ji}=0$ for $i\in A, j\not\in A$. This is a contradiction as $C$ is fully coupled, which shows (\ref{item:points-to-smaller}).
\end{proof}
Now let $\Gamma$ be the subgraph of $\bar{\Gamma}$ such that $N_\Gamma^+(e_A)=\mathcal{N}_A$ for all subsets $A\subseteq\{1,2,...,n\}$. Assumption (\ref{item:structure}) in Theorem \ref{thm:mp} holds by Claim \ref{claim:only}(\ref{item:minimal}), and assumption (\ref{item:reachable}) in Theorem \ref{thm:mp} follows from Claim \ref{claim:only}(\ref{item:points-to-smaller}). Therefore 
\begin{align*}
    u_i=\langle e_{\{i\}}, u\rangle\geq\min_{v\in \{0, K\nu\}}\min_X \langle v,u\rangle =\min\left(0,\min_X K\langle \nu, u\rangle\right)= 0.
\end{align*}
Thus we have shown that $u_i\geq 0$ for all $i$. It remains to check that either all $u_i$ are positive, or they all vanish identically. This will follow from the next claim.
\begin{claim}\label{claim:vanishing-ui}
    \begin{enumerate}
        \item If $u_i$ vanishes somewhere, then $u_i\equiv 0$.
        \item If $u_i\equiv 0$ and $c_{ij}\not\equiv 0$ for some $j$, then $u_j\equiv 0$. 
    \end{enumerate}
\end{claim}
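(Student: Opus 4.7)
For part (1), the plan is to decouple the system into a scalar inequality for $u_i$ alone and then apply the strong maximum principle. Using cooperativity ($c_{ij} \leq 0$ for $j \neq i$) together with the fact that $u_j \geq 0$ for all $j$ (established just before the claim), I observe that $\sum_{j \neq i} c_{ij} u_j \leq 0$ pointwise on $X$. The $i$-th equation of the system then rearranges to
\[
(-\Delta + c_{ii}) u_i \;=\; -\sum_{j \neq i} c_{ij} u_j \;\geq\; 0,
\]
with zeroth-order coefficient $c_{ii} \geq 0$. Thus $u_i$ is a nonnegative supersolution of a Schr\"odinger-type operator with nonnegative potential. If $u_i(x_0) = 0$ for some $x_0 \in X$, then $u_i$ attains its (nonpositive) infimum at an interior point, so the strong minimum principle \cite[Theorem 3.5]{Gilbarg2001}, applied in a coordinate chart around $x_0$ and propagated by connectedness of the closed manifold $X$, forces $u_i$ to be constant; since that constant is $0$, we conclude $u_i \equiv 0$.

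For part (2), I would substitute $u_i \equiv 0$ into the $i$-th equation. The Laplacian of $u_i$ and the diagonal contribution $c_{ii} u_i$ both vanish, leaving
\[
0 \;=\; \sum_{j \neq i} c_{ij} u_j.
\]
Each summand $c_{ij} u_j$ is nonpositive by cooperativity combined with $u_j \geq 0$, so each must vanish identically on $X$. If $c_{ij} \not\equiv 0$ for some $j$, I pick a point $x_0$ with $c_{ij}(x_0) \neq 0$; then $u_j(x_0) = 0$, and applying part (1) to the index $j$ delivers $u_j \equiv 0$.

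I do not expect a genuine obstacle here. The only mild subtlety is that the strong maximum/minimum principle is classically stated on Euclidean domains rather than on closed manifolds; invoking it locally in charts and then using connectedness of $X$ to spread the conclusion is standard. Once Claim \ref{claim:vanishing-ui} is established, the proof of Theorem \ref{thm:dai-li-gen} concludes quickly: the full-coupling hypothesis prevents the index set $\{i : u_i \equiv 0\}$ from being a proper nonempty subset closed under the propagation provided by part (2), so the $u_i$ are either all identically $0$ or, by part (1), all strictly positive.
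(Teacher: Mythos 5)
Your proof is correct. Part (2) is verbatim the paper's argument. For part (1) you take a more elementary route: you decouple the system into the scalar differential inequality $(\Delta - c_{ii})u_i = \sum_{j\neq i}c_{ij}u_j \leq 0$ (using cooperativity and the already-established nonnegativity of $u$), and apply the strong minimum principle directly to $u_i$, which attains its nonpositive infimum $0$ at an interior point of the closed manifold. The paper instead deduces part (1) from the equality case of its Theorem \ref{thm:mp}: since $u_i = \langle e_{\{i\}}, u\rangle$ and the minimum over $S_\partial = \{0, K\nu\}$ is $0$, the vanishing of $u_i$ somewhere forces equality in (\ref{eq:mp-main}), whence $u_i$ is constant. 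The two arguments rest on the same underlying tool (the strong maximum principle, which in the paper is buried inside Lemma \ref{lm:mp-directional}), so the difference is organizational: your version is self-contained and does not need the graph-theoretic machinery of the appendix for this step, while the paper's version reuses the framework it has just built and thereby avoids re-deriving the relevant differential inequality. Your closing remark about how the claim finishes Theorem \ref{thm:dai-li-gen} via full coupling also matches the paper.
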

\begin{proof}
\begin{enumerate}
    \item 
Suppose that $u_i$ vanishes somewhere on $X$. Then equality is achieved in Theorem \ref{thm:mp} for $e_{\{i\}}$. In particular, we see that $u_i$ is constant, and hence $u_i\equiv 0$. 
\item 
    Suppose that $u_i\equiv 0$. Then $0=\Delta u_i=c_{ii} u_i+\sum_{j\neq i} c_{ij} u_j=\sum_{j\neq i} c_{ij}u_j\leq 0$, since $c_{ij}u_j\leq 0$. Thus $c_{ij}u_j\equiv 0$ for all $j\neq i$. If $c_{ij}\not\equiv 0$, then $u_j$ vanishes somewhere on $X$, so by (1), we have $u_j\equiv 0$.
\end{enumerate}
\end{proof}
Let $A$ be the set of indices $i$ such that $u_i$ vanishes somewhere. Note that for any $i\in A, j\in\{1,2,...,n\}\setminus A$, we have $c_{ij}\equiv 0$ by Claim \ref{claim:vanishing-ui}. Since the matrix $C$ is fully coupled, it follows that $A$ is either empty or $\{1,2,...,n\}$. In the former case, we have $u_i>0$ for all $i$. In the latter case, we have $u_i\equiv 0$ for all $i$ by Claim \ref{claim:vanishing-ui}(2). This concludes the proof of Theorem \ref{thm:dai-li-gen}.

\bibliographystyle{plain}
\bibliography{bibliography}

\end{document}